\documentclass[reqno, 12 pt]{amsart}
\usepackage[utf8]{inputenc}

\usepackage{amssymb} 
\usepackage{mathrsfs}  
\usepackage{amsmath}  
\usepackage{euscript} 
\usepackage{amsthm} 
\usepackage{dsfont}  
\usepackage{amsfonts}      
\usepackage{marginnote}
\usepackage{latexsym} 
\usepackage{amsopn} 
\usepackage{amscd} 
\usepackage{caption} 
\usepackage{aurical}  
\usepackage[english]{babel}
\usepackage{esint} 
\usepackage{graphicx}
\usepackage[dvipsnames]{xcolor}
\usepackage{multicol}
\usepackage{nomencl}
\makeglossary
\makenomenclature  
\usepackage{refcount}
\usepackage{float}
\usepackage{times}
\usepackage[titletoc]{appendix}
\usepackage{accents}
\usepackage{scalerel,stackengine}
\usepackage{tikz-cd}
\usepackage{footmisc}
\usepackage{setspace}
\usepackage{verbatim}

\usepackage[
paper=a4paper,
textwidth=15.5cm,
textheight=22.5cm,
centering,
heightrounded 
]{geometry}

\usepackage{hyperref}
\usepackage{mathtools}
\mathtoolsset{showonlyrefs}

\stackMath
\newcommand\rwhat[1]{%
	\savestack{\tmpbox}{\stretchto{%
			\scaleto{%
				\scalerel*[\widthof{\ensuremath{#1}}]{\kern-.6pt\bigwedge\kern-.6pt}%
				{\rule[-\textheight/2]{1ex}{\textheight}}%
			}{\textheight}%
		}{0.5ex}}%
	\stackon[1pt]{#1}{\tmpbox}%
}
\newlength{\dhatheight}

\newcommand{\eps}{\varepsilon}

\newcommand{\sgn}{\operatorname{sgn}}

\newtheorem{thm}{Theorem}[section]
\newtheorem*{thm*}{Theorem}
\newtheorem{prop}[thm]{Proposition}
\newtheorem{lemma}[thm]{Lemma}

\newtheorem{cor}[thm]{Corollary}
\newtheorem*{cor*}{Corollary}
\newtheorem{rmk}[thm]{Remark}
\newtheorem{ex}[thm]{Example}
\newtheorem{ex*}{Example}

\newtheorem{defn}[thm]{Definition}
\newtheorem{thmA}{Theorem}

\newtheorem{thmB}{Theorem}

\newtheorem{thmC}{Theorem}

\newtheorem{thmD}{Theorem}

\newtheorem{thmBO}{Theorem}

\newtheorem{notation}[thm]{Notation}

\numberwithin{equation}{section}

\newcommand{\om}{{\omega}}

\newcommand{\C}{{\mathbb C}}

\newcommand{\N}{{\mathbb N}}
\newcommand{\Q}{{\mathbb Q}}
\newcommand{\R}{{\mathbb R}}

\newcommand{\T}{{\mathbb T}}
\newcommand{\Z}{{\mathbb Z}}

\newcommand{\tw}{{\mathtt{w}}}

\usepackage{bm}

\newcommand{\im}{{\rm i}}
\newcommand{\jap}[1]{\langle #1 \rangle}

\newcommand{\nnorm}[1]{{\left\vert\kern-0.25ex\left\vert\kern-0.25ex\left\vert #1 
		\right\vert\kern-0.25ex\right\vert\kern-0.25ex\right\vert}}

\usepackage{framed,enumitem}

\makeatletter

\def\l@subsection{\@tocline{2}{0pt}{2.5pc}{5pc}{}}
\def\l@subsubsection{\@tocline{3}{0pt}{4.5pc}{5pc}{}}
\renewcommand\tocchapter[3]{%
	\indentlabel{\@ifnotempty{#2}{\ignorespaces#2.\quad}}#3%
}

\newcounter{paraga}[subsection]
\renewcommand{\theparaga}{{\bf\arabic{paraga}.}}
\newcommand{\paraga}{\medskip \addtocounter{paraga}{1}
\noindent{\theparaga\ } }

\begin{document} 
	
	\author{Lorenzo Baroni}
	\address{Università degli Studi Roma Tre}
	\email{lorenzo.baroni@uniroma3.it}

\begin{abstract}
	This article is concerned with Kronecker flows on the infinite torus. The work is partly motivated by the fact that many Hamiltonian PDEs and systems on infinite lattices admit invariant tori, of possibly infinite dimension, on which the dynamics is linearizable. Finite-dimensional Kronecker flows are well understood: the dynamics can be reduced to a non-resonant flow on a subtorus, which is equivalent to being topologically transitive, to minimality, and to unique ergodicity in the projection. We prove that these properties still hold when the dimension of the torus is infinite if and only if the integer (finite) linear combinations of the frequencies form a free abelian group. Next, we construct a class of orbits whose closure is locally homeomorphic to the product of a ball and a Cantor set, extending a recent result by Sakbaev and Volovich. We also show that the Benjamin-Ono equation admits this type of solutions. Finally, we prove the equivalence between a classification problem for Kronecker flows and that for countable abelian groups without torsion.
	\end{abstract}

\title{Kronecker Flow on the Infinite Torus}
\maketitle
\tableofcontents

\section{Introduction and Main Results}
\subsection{Context} Kronecker flows play a central role in Hamiltonian dynamics as they describe the almost-periodic motion on compact, regular common level sets of complete systems of Poisson-commuting first integrals. The Kronecker flow on the regular sets is defined by a particular Hamiltonian which admits these first integrals. In the finite-dimensional case, KAM theory establishes the persistence of most of such almost-periodic solutions under small perturbations. In general, this is not the case in infinite-dimensional Hamiltonian systems. Still, the search for almost-periodic solutions is an active and challenging topic in the study of global solutions for evolution PDEs. Therefore, understanding Kronecker flows on infinite tori is crucial for gaining insights into the dynamical and topological properties of almost-periodic solutions of Hamiltonian PDEs and systems on infinite lattices.

When the dimension of the torus is finite, Kronecker flows satisfy the following properties.

\begin{enumerate}[label=\textbf{\Alph*}, ref=\Alph*]
	\item \label{prop:nonres} The (non-resonance) condition that the frequencies are rationally independent is equivalent to topological transitivity, to minimality, and to unique ergodicity (see \cite{lazutkin} for a modern exposition).
	\item \label{prop:kronecker} Any Kronecker flow is topologically conjugate by an automorphism of the torus to another Kronecker flow having the property that the non-zero frequencies are rationally independent (see, for e.g., \cite{fasano}). Consequently, by Property \ref{prop:nonres}, the closure of any orbit is still a torus.
\end{enumerate}

Preliminaries studies of the infinite-dimensional analogue are due to Jessen \cite{Jess}, who essentially proves a special case of Birkhoff's Ergodic Theorem. The ergodic properties of Kronecker flows on infinite tori with the product topology are also studied in \cite{kozov} and \cite{SakVol}. In the same paper \cite{SakVol}, Sakbaev and Volovich also show that there is a new type of trajectories that is absent in the finite-dimensional case, namely, orbits whose closure is not a torus. Therefore, the properties of Kronecker flow in the infinite-dimensional setting are much more involved than in the finite-dimensional case.

\subsection{Purposes and Informal Short Summary of the Main Results} In this paper we consider Kronecker flows on (countably) infinite tori endowed with the product topology. This particular choice of the topology, which coincides with the one considered in \cite{Jess}, \cite{kozov} and \cite{SakVol}, is motivated in Section \ref{motivations} and it is related to the topology of the functional spaces typically considered in the study of Hamiltonian PDEs. We refer to the next section for precise definitions. 

\paraga The first question addressed in this work can be informally rephrased as follows.

\begin{center}
\medskip
\textit{Does there exist a class of Kronecker flows on infinite tori which share the same properties \footnote{In the sense of Properties \ref{prop:nonres} and \ref{prop:kronecker}.} as in the finite-dimensional case?}
\medskip
\end{center}

We prove that this question has positive answer (see Theorem \ref{A} and \ref{B}). Namely, we identify this special class as the class of Kronecker flows whose frequency module — the $\mathbb{Z}$-module of all integer (finite) linear combinations of the frequencies — is a free abelian group. In this light, Properties \ref{prop:nonres} and \ref{prop:kronecker} of the finite-dimensional case follow form the structure theorem for finitely generated abelian groups without torsion, which ensures that the frequency module is always free in the case of Kronecker flows with only finitely many non-zero frequencies (see Remark \ref{remfinfree}).

\paraga It is not difficult to provide examples of Kronecker flows on infinite tori whose associated frequency module is not free. This remark motivates the next question tackled in this paper.

\begin{center}
	\medskip
	\textit{In the case of Kronecker flows with non-free frequency module, what is the topological structure of the closure of the orbits?}
	\medskip
\end{center}

In Section \ref{ratflow} and \ref{orbit-dual} we construct a class of Kronecker flows with the property that the closure of any orbit is a product of circles and solenoids\footnote{Solenoids are compact, connected subgroups of the infinite torus. We give the definition in Section \ref{ratflow} and we also study their local topological properties (see Proposition \ref{localcantor}). For an intrinsic definition, see for e.g. \cite{HR}, in which they go under the name of $\textit{\textbf{a}}$\textit{-adic Solenoids}.}. This can be viewed as an extension of the result of Sakbaev and Volovich \cite{SakVol}, who showed that for linear flows on the infinite torus there exist orbits whose closure is not a torus (see Remark \ref{rmksakvol}). Indeed, we exhibit a specific class of such flows and we provide a more refined understanding of the topological structure of such orbits (see Theorem \ref{productcirclesol}).

\paraga At this stage, the following natural question arises.

\begin{center}
\medskip
\emph{Are we able to exhibit examples of Hamiltonian PDEs admitting almost-periodic solutions whose closure is homeomorphic to a product of circles and solenoids?}
\medskip
\end{center}

The Benjamin-Ono equation with periodic boundary condition for a real valued function
\begin{equation}\label{introBO}
u_t = \mathrm{H}(u_{xx}) - 2uu_x, \quad x \in \mathbb{R}/2\pi\mathbb{Z}, \quad t \in \mathbb{R},
\end{equation}
where $\mathrm{H}(\cdot)$ denotes the Hilbert transform
\begin{equation}\label{ht0}
	\mathrm{H}(u)(x,t)=-\frac{\im}{2\pi}\sum_{j \in \Z}\sgn(j)\int_{0}^{2\pi}u(x',t)e^{\im j(x- x')}dx',
	\end{equation}
is an infinite-dimensional, completely integrable (see \cite{BO}), Hamiltonian system that describes one-dimensional, periodic internal waves in deep water approximation. In Theorem \ref{BO} of Section \ref{ssbo}, we show that it admits a family of (non-typical) almost-periodic solutions that are dense on invariant subsets of $L^2(\R/2\pi\Z)$ homeomorphic to the product of a circle and a solenoid. It may be interesting to understand if such family of almost-periodic solutions can be characterized by some physical feature. One can think of some new type of turbulence.

\paraga The last question we answer consists in a classification problem.

\begin{center}
	\medskip
	\textit{Given two Kronecker flows, are we able to determine whether the closures of their orbits are homeomorphic?}
	\medskip
\end{center}

 In Section \ref{stocazzo} we prove that the above question is equivalent to the highly non-trivial classification problem for countably infinite abelian groups without torsion. To date, a complete classification is known only for the rank $1$ case (see \cite{baer}, \cite{beau} and \cite{Fuchs2015}), which corresponds to the class of Kronecker flows whose orbits are dense in a product of circles and solenoids, as we show in Section \ref{orbit-dual}. The key result that bridges such correspondence is Theorem \ref{E}, which we believe is interesting by itself.

\subsection{Setting}\label{setting}
Let us give the fundamental definitions to rigorously introduce the subject of concern and give a brief summary of the main results.

\begin{notation}\label{notations}
	In what follows, we denote by $\mathbb{N} := \{1, 2, \dots\}$ the set of positive integers.
	
	The symbol $\mathbb{T} := \mathbb{R}/2\pi\mathbb{Z}$ denotes the circle group, identified with the additive group of real numbers modulo $2\pi$ and endowed with the quotient topology.
	
	Given an additive abelian group $G$, a subgroup $H \subseteq G$, and an element $g \in G$, we denote by $g + H$ the coset (or equivalence class) of $g$ modulo $H$, defined as:
	\[
	g + H := \{g + h \, : \, h \in H\}.
	\]
	This corresponds to the equivalence relation where $g \sim g'$ if and only if $g - g' \in H$.
\end{notation}

\paraga \textit{Torus.} A torus is a countable direct product of circle groups. More precisely, given a countable set of indices $J$, the torus $\T^J$ is the topological abelian group defined as
\begin{equation}
\mathbb{T}^J := \prod_{j \in J} \mathbb{T}_j, \quad \mathbb{T}_j = \mathbb{T} = \mathbb{R}/2\pi\mathbb{Z}, \quad \text{for all } j \in J.
\end{equation}
endowed with the product topology, that is, the topology of pointwise convergence. Note that any torus is metrisable and, by Tychonoff's theorem, it is compact. A basis for the product topology is given by the open cylinders, where a cylinder is a subset $\prod_{j \in J} X_j$ of $\mathbb{T}^J$ such that, for each $j \in J$, $X_j \subseteq \mathbb{T}$, with $X_j = \mathbb{T}$ for all but finitely many $j \in J$, and an open cylinder is a cylinder in which all the $X_j$ are open. See Appendix \ref{AAA} for more details about the product topology.

\begin{notation}\label{thetalift}
	Throughout this work, given $\theta=(\theta_j)_{j \in J} \in \T^J$, we denote by $\Theta:=(\Theta_j)_{j \in J}$ any list of real numbers such that $\theta_j=\Theta_j + 2\pi\Z \,$ for all $j \in J$.
\end{notation}

With this notation, we define the group operation $*$ on $\mathbb{T}^J$ componentwise: for any $\theta, \theta' \in \mathbb{T}^J$, their product $\theta * \theta'$ is the element of $\mathbb{T}^J$ whose $j$-th component is given by
\begin{equation}
(\theta * \theta')_j := \Theta_j + \Theta'_j + 2\pi\mathbb{Z}, \quad \text{for all } j \in J.
\end{equation}
It is straightforward to verify that this definition is well-defined, as it is independent of the choice of the representatives $\Theta_j$ and $\Theta'_j$.

\begin{rmk}
	In the following, we shall improperly use the term \textquotedblleft dimension\textquotedblright \ to refer to the rank (in the group-theoretic sense) of the torus. This is motivated by the fact that, if the cardinality of $J$ is finite, the rank of $\ \T^J$ coincides with its dimension when viewed as a topological manifold. Note that, if the cardinality of $J$ is not finite, $\T^J$ cannot be a topological manifold, i.e., locally homeomorphic to a topological vector space (TVS). Indeed, any open cylinder of $ \ \T^J$ contains non contractible loops while TVS are simply connected.
\end{rmk}

\paraga \textit{Kronecker Flow.} Given $\om=(\om_j)_{j \in J}\in \mathbb{R}^J$, we define the Kronecker flow associated with the \textquotedblleft frequency vector\textquotedblright $\, \om$ as the topological dynamical system $(\T^J,\Phi_\om)$ induced by the continuous $\mathbb{R}$-action
\begin{equation}\label{dyn}
\Phi_\om:\mathbb{R}\times \T^J \to \T^J, \quad (t,\theta) \mapsto \Phi^t_\om(\theta):=(\Theta_j +\om_j t +2\pi\Z)_{j \in J}.
\end{equation}
The real numbers $\om_j$ are called \textquotedblleft frequencies\textquotedblright.

\begin{rmk}\label{orb-Oom}
	The closure of any orbit induced by a Kronecker flow is homeomorphic to the closure of the orbit through the origin. This follows from the invariance by translations.
\end{rmk}

\subsection{Motivations}\label{motivations}
In the sixties, it was realized that many Hamiltonian systems with infinitely many degrees of freedom possess an infinite number of constants of motion. Some of these systems exhibit characteristics very similar to those of finite-dimensional completely integrable Hamiltonian systems. Namely, the solutions are supported on invariant tori of possibly infinite dimension; in this case, if an appropriate topology is induced on them, they turn out to be homeomorphic to $\T^J$ for some countable set of indices $J\subseteq \Z \,$, and the dynamics on them is conjugated to a Kronecker flow. Examples of systems having these features are provided by Hamiltonian PDEs on the circle having an elliptic fixed point, such as the Korteweg-de Vries equation (KdV), the Nonlinear Schr\"odinger equation (NLS) and the Benjamin-Ono equation (BO) \eqref{introBO} with periodic boundary conditions.

 These equations have been extensively studied in the Sobolev-Hilbert space $H^s_0(\T)$,
 \begin{equation}
 	H^s_0(\T):=\bigg\{u \in H^s(\T) \; : \; \int_{0}^{2\pi}u(x) \, dx=0\bigg\},
 \end{equation}
 for suitable $s\geq0$.\footnote{The flows of the equations KdV, NLS and BO preserve the average of the solutions.} For each of the above equations, there exists $s'\geq0$ and a global bi-analytic diffeomorphism
 \begin{equation*}
 \Psi:H_0^s(\T)\ni u \mapsto \Psi(u) \in h^{s'}:=\bigg\{z=(z_j)_{j \in \N}\in \C^\N \; : \; \|z\|^2_{h^{s'}}:=\sum_{j\in \N} j^{2s'}|z_j|^2<\infty\bigg\},
 \end{equation*}
 a \textquotedblleft Birkhoff map",\footnote{For the case of the NLS, being the unknown a complex valued function, $\N$ has to be replaced with $\Z\setminus\{0\}$.} such that , for any initial data $\underline{u}\in H^s_0(\T)$, the solution is supported on the invariant set $\Psi^{-1}(\mathcal{T}_{I})$. Here $I=(I_j)_{j \in \N}$ is the list defined by $I_j:=|\Psi_j(\underline{u})|^2$, where $\Psi(\underline{u})=(\Psi_j(\underline{u}))_{j \in \N}$, and
 \begin{equation}\label{Tgamma}
 \mathcal{T}_{I}:=\{z \in h^{s'} \; : \; |z_j|^2=I_j, \; \forall j\in \N  \}.
 \end{equation}
  \begin{rmk}
 	The sets defined in \eqref{Tgamma}, as showed in Appendix \ref{AAC}, when considered with the subspace topology induced by the $\|\cdot\|_{h^{s'}}$-norm, are homeomorphic to the possibly infinite torus $\T^{\text{supp}(I)}$, where $\text{supp}(I)$ denotes the support of the sequence $I$. Namely, the map
 	\begin{equation}
 	\Xi:\T^{\text{supp}(I)} \ni \theta \mapsto \Xi(\theta):=(\sqrt{I_j}e^{\im \Theta_j})_{j\in \N}\in \mathcal{T}_{I}\subset h^{s'},
 	\end{equation}
 	where one chooses $\Theta_j:=0$ for all $j\notin \text{supp}(I)$, is a topological embedding. Whenever the cardinality of $\text{supp}(I)$ is infinite, the torus $\T^{\text{supp}(I)}$ has infinite dimension. 
 	\end{rmk}
\noindent With respect to the coordinate system induced by the Birkhoff map, the solution for the given initial data $\underline{u}$ is continuous in time as a map with values in $h^{s'}$ and is given by
 	\begin{equation}\label{f}
 	\Psi_j(u(t))=\Psi_j(\underline{u})e^{\im \om_j(I)t}, \quad 	\om_j(I):=\frac{\partial H\circ \Psi^{-1}}{\partial |z_j|^2}(\sqrt{I_1},...,\sqrt{I_n},...), \quad \forall j \in \N,
 	\end{equation}
 	where $H:H^s_0(\T)\to \R$ is the Hamiltonian function of the system under consideration.
For references, see \cite{kappeler2003kdv},\cite{BO},\cite{GrebKappaler2014} and \cite{Kappeler2009}. Therefore, since the Birkhoff map $\Psi$ is a bi-analytic diffeomorphism, and hence, a homeomorphism, the Hamiltonian flow on the invariant torus $\Psi^{-1}(\mathcal{T}_I)$ is topologically conjugate by $\mathscr{H}:=\Psi^{-1}\circ \Xi \ $ to the Kronecker flow
\begin{equation}
 	\Phi_\om:\mathbb{R}\times \T^{\text{supp}(I)} \to \T^{\text{supp}(I)}, \quad (t,\theta) \mapsto \Phi^t_\om(\theta):=(\Theta_j +\om_j(I) t +2\pi\Z)_{j\in \text{supp}(I)},
 	\end{equation}
 	with $\om=(\om_j(I))_{j \in \N}$.

In \cite{KM}, the global weak* $\mathscr{C}^0$-wellposedness of the periodic KdV equation in the space of pseudomeasures on $\T$ is proved. For any given initial data, the solution is supported on an invariant subset that, when endowed with the subspace topology induced by the weak$^*$ topology, is homeomorphic to a torus  (possibly of infinite dimension) with the product topology. Furthermore, the KdV flow on these invariant sets is topologically conjugate to a Kronecker flow.

To underline the ubiquity of Kronecker flows in the context of infinite Hamiltonian systems, we point out that there are also results about the construction of invariant tori for non-integrable Hamiltonian PDEs. For instance, in \cite{BOURGAIN200562}, Bourgain constructs invariant tori of \textquotedblleft full dimension" in Gevrey regularity for the $1D$ periodic NLS equation with convolution potential $V=(V_j)_{j \in \Z}\in \ell^\infty(\Z,\R)$ whose coefficients are considered as external parameters. In \cite{BMP}, the authors extend the result of Bourgain to an entire ball of initial data in the phase space of Fourier coefficients
\begin{equation}\label{w}
	\tw^\infty_{p,s,a}:=\bigg\{u=(u_j)_{j \in \Z}\in\C^\Z \; : \; \|u\|_{\tw^\infty_{p,s,a}}:=\sup_{j \in \Z}|u_j|\jap{j}^pe^{a|j|+s\jap{j}^\alpha} \bigg\}, \quad \jap{j}:=\max\{1,|j|\},
\end{equation}
with $p>1$, $s>0$, $0<\alpha<1$ and suitable $a\geq 0$. Namely, for most choices of \mbox{$\omega = (\omega_j)_{j \in \mathbb{N}} \in \mathbb{R}^\mathbb{Z}$} satisfying $\sup_{j \in \Z}|\om_j-j^2|\leq1/2$, and for all list $I=(I_j)_{j \in \Z}$ of non-negative real numbers such that $(\sqrt{I_j})_{j \in \Z}$ belongs to a suitable neighbourhood of the origin in $\tw^\infty_{p,s,a}$, there exists $V\in \ell^\infty(\Z,\R)$ such that the set $\mathcal{T}_I$ defined by
\begin{equation}
\mathcal{T}_{I}:=\{u=(u_j)_{j \in \Z}\in \tw^\infty_{p,s,a} \; : \; |u_j|^2=I_j \}
\end{equation}
is invariant for the NLS flow with convolution potential $V$. Furthermore, when $\mathcal{T}_I$ is endowed with the subspace topology induced by the weak* topology $\tau_*$,\footnote{Being the topological dual of the Banach space
	$$\ell^1_{p,s,a}:=\{v=(v_j)_{j \in \Z} \; : \; \|v\|_{\ell^1_{p,s,a}}:=\textstyle{\sum_{j \in \Z}}|v_j|\jap{j}^{-p}e^{-a|j|-s\jap{j}^\alpha}<\infty\},$$
	$\tw^\infty_{p,s,a}$ can be endowed with the weak* topology, i.e., the coarsest topology that makes $\tw^\infty_{p,s,a}\ni u \mapsto u(v) \in \C$ continuous for all $v \in \ell^1_{p,s,a}$.} the dynamics on such invariant set is topologically conjugate to a Kronecker flow on $\T^{\text{supp}(I)}$ (with $\om_j \sim j^2$ for large $|j|$) through the homeomorphism associated with the embedding\footnote{See Appendix \ref{AAC}.}
\begin{equation}
\mathbb{T}^{\text{supp}(I)} \ni \theta \mapsto (\sqrt{I_j} e^{\im \Theta_j})_{j \in \mathbb{Z}} \in \mathcal{T}_I \subset (\tw^\infty_{p,s,a}, \tau_*), \quad \Theta_j := 0, \; \text{for all } j \notin \text{supp}(I).
\end{equation}

The analysis carried out in the present paper yields a connection between algebraic properties of the characteristic frequencies of the infinite Hamiltonian system under consideration and the topological structure of the minimal invariant sets. In Section \ref{ssbo} we apply our results to the Benjamin-Ono equation \eqref{introBO}.

Before continuing with the introduction, it is worth noting that Kronecker flows on infinite tori are deeply connected with the theory of almost-periodic functions. We refer the interested reader to \cite{fink1974almost}.

\subsection{Presentation of the Main Results}\label{pres}
In order to present our results, we briefly recall some basic definitions from topological dynamics.

\paraga \textit{Basics of Topological Dynamics.}  A topological dynamical system is a pair $(X,\Phi)$, where $X$ is a topological space and $\Phi$ is a continuous flow $\Phi:\mathbb{R}\times X \to X$. An invariant probability measure for $(X,\Phi)$ is a probability Borel measure $\mu$ on $X$ such that, for any $t$ in $\mathbb{R}$ and any Borel subset $U\subseteq X$, $\mu((\Phi^{t})^{-1}(U))=\mu(U)$. In terms of the pushforward measure, this states that $\Phi^t_*(\mu)=\mu$ for all $ t \in \mathbb{R}$. 

We now recall the definitions of the properties of a topological dynamical system that will be relevant to our study.

A topological dynamical system $(X,\Phi)$ is
\begin{itemize}
	\item  topologically transitive if there exists a dense orbit, and minimal if every orbit is dense;
	\item  uniquely ergodic if it admits a unique invariant probability measure.
\end{itemize}

Let $\mu$ be an invariant probability measure for $(X,\Phi)$. Then, $(X,\Phi,\mu)$ is
\begin{itemize}
\item ergodic if the only invariant sets have measure $1$ or $0$.
\end{itemize}

\begin{rmk}\label{uni}
	A topological dynamical system $\Phi:\mathbb{R}\times X \to X$ admitting a unique invariant probability measure must be ergodic. Indeed, let $\mu$ be an invariant probability measure for $(X,\Phi)$. If the system is not ergodic, there exists $U \subset X$ invariant and such that $0<\mu(U)<\mu(X)=1$. But then, we can define another invariant probability measure simply normalizing by $\mu(U)$, namely, $\mu_U(V):=(\mu(U))^{-1}\mu(V\cap U), \ \forall V \subset X$.
\end{rmk}

A central notion in topological dynamics is that of topological conjugacy. In particular, all the aforementioned properties of a topological dynamical system are preserved under topological conjugacy.

\begin{defn}\label{topoconj}
	Two topological dynamical systems $(X,\Phi)$ and $(Y,\Psi)$ are said to be topologically conjugate if there exists a homeomorphism $h:X \to Y $ such that, for any $t \in \mathbb{R}$, 
	\begin{equation}
		h\circ \Phi^t =\Psi^t \circ h.
	\end{equation}

\end{defn}

\paraga \textit{Resonance Module and Frequency Module of a Kronecker flow.}
In this paragraph, we introduce two abelian groups that one can associate with every Kronecker flow: the resonance module and the frequency module, both of which play a crucial role in the following discussion.

\begin{rmk}
	Given a countable set of indices $J$, and a countable family of abelian groups $(G_j)_{j \in J}$, the direct sum $\bigoplus_{j \in J}G_j$ is the abelian subgroup of $\prod_{j \in J}G_j$ consisting of all $(g_j)_{j \in J}\in\prod_{j \in J}G_j$ such that $g_j$ differs from $0$ only for finitely many $j \in J$.
\end{rmk}
\begin{notation}
	Given a set of indices $J$ and an abelian group $G$, we adopt the convention
	\begin{gather}
	G^J:=\prod_{j\in J}G:=\prod_{j \in J}G_j, \quad G_j=G, \; \forall j \in J, \\ G^{(J)}:=\bigoplus_{j\in J}G:=\bigoplus_{j \in J}G_j, \quad G_j=G, \; \forall j \in J.
	\end{gather}
	If $J=\{1,...,n\}$, with $n\in \N$, we set $G^n:=G^J=G^{(J)}$.
\end{notation}
\begin{defn}[Resonance module]
	Given $\om \in \mathbb{R}^J$, we define the resonance module associated with $\om$ as 
	\begin{equation}\label{resmod}
	\mathscr{R}_\om:=\bigg\{\nu \in \Z^{(J)} \; : \; \sum_{j \in J}\om_j\nu_j=0\bigg\}.
	\end{equation}
	We say that, the real numbers $(\om_j)_{j \in J}$ are rationally independent if and only if $\mathscr{R}_\om=\{0\}$, in which case, we say that $\om=(\om_j)_{j \in J}$ is non-resonant, otherwise we say that $\om$ is resonant. Accordingly, the Kronecker flow $(\T^J,\Phi_\om)$ associated with $\om$ is said to be (non-)resonant if $\om$ is (non-)resonant.
\end{defn}

\begin{defn}\label{fa}
	An abelian group is said to be free if it admits a basis. Namely, an abelian group $G$ is free if and only if there exists a set of indices $J$ and an isomorphism of groups between $G$ and $\Z^{(J)}$ (see Corollary \ref{freeZ}).
\end{defn}

\begin{rmk}
	Being a subgroup of the free abelian group $\Z^{(J)}$, the resonance module $\mathscr{R}_\om$ is itself free as an abelian group (see, for e.g., \cite{rotman2002advanced}).
\end{rmk}

\begin{defn}[Frequency Module]
	The frequency module associated with $\om\in\mathbb{R}^J$ is the subgroup of $(\R,+)$ generated by the integer (finite) linear combinations of the frequencies, i.e.,
	\begin{equation}
	\mathscr{M}_\om:=\bigg\{\sum_{j\in J}\om_j\nu_j \; : \; \nu \in \Z^{(J)}\bigg\}\subset \R .
	\end{equation}
\end{defn}

\begin{rmk}\label{firstisothm}
	As the map $\Z^{(J)} \ni \nu \mapsto \om\cdot\nu \in (\R,+)$ is a group homomorphism, by the first isomorphism theorem, the frequency module $\mathscr{M}_\om$ is isomorphic to the quotient group $\Z^{(J)}/\mathscr{R}_\om$, where $\mathscr{R}_\om$ is the resonance module defined in \eqref{resmod}.
\end{rmk}

\begin{rmk}\label{cwt}
$\mathscr{M}_\om$ is countable and without torsion, namely, for each element $g\in\mathscr{M}_\om$, if there exists a non-zero integer $n$ such that $ng=0$, then, $g=0$.
\end{rmk}
 The following remark highlights the most fundamental difference between the finite-dimensional and infinite-dimensional settings and, as we shall see in Theorem \ref{B}, reveals the reasoning behind why the simple, well known properties \ref{prop:nonres} and \ref{prop:kronecker} that hold for Kronecker flows on finite-dimensional tori do not apply in general to this infinite-dimensional context.
\begin{rmk}\label{remfinfree}
If the cardinality of $J$ is finite, then $\mathscr{M}_\om$ is a free abelian group: this follows from Remark \ref{cwt} and the general structure theorem for finitely generated abelian groups without torsion, noting that the finiteness of the cardinality of $J$ ensures that the frequency module is finitely generated.

\smallskip
If the cardinality of $J$ is not finite, $\mathscr{M}_\om$ may be not free: it is no longer guaranteed that $\mathscr{M}_\om$ is finitely generated. As an example, for $J=\N$ and $\om=(\om_j)_{j \in \N}$ with $\om_j=1/j$, one has $\mathscr{M}_\om=(\Q,+)$.
\end{rmk}

\paraga \textit{The Finite-Dimensional Case.} Let $n \in \mathbb{N}$ and $\omega = (\omega_j)_{j=1}^n \in \mathbb{R}^n$. For the Kronecker flow $\Phi_\omega$ on $\mathbb{T}^n$, topological transitivity, minimality, and unique ergodicity are equivalent to the non-resonance condition $\mathscr{R}_\omega = \{0\}$.

Furthermore, if $1 \leq \mathrm{rank}(\mathscr{R}_\omega) =: d < n$, there exists an automorphism $\mathscr{A}$ of $\mathbb{T}^n$ that conjugates $(\mathbb{T}^n, \Phi_\omega)$ to another Kronecker flow $(\mathbb{T}^n, \Phi_{\tilde{\omega}})$; specifically,
\begin{equation}
\mathscr{A} \circ \Phi^t_{\omega} = \Phi^t_{\tilde{\omega}} \circ \mathscr{A} \quad \text{for all } t \in \mathbb{R},
\end{equation}
where $\tilde{\omega} = (0_d, \bar{\omega})$, $\bar{\omega}$ is non-resonant, and $0_d$ is the null vector in $\mathbb{R}^d$. In particular, if we denote by $q^d$ the projection
\begin{equation}
q^d : \mathbb{T}^n \to \mathbb{T}^d, \quad (\theta_j)_{j=1}^n \mapsto (\theta_j)_{j=1}^d,
\end{equation}
the continuous surjection $q^d \circ \mathscr{A} : \mathbb{T}^n \to \mathbb{T}^d$ defines a (trivial) fiber bundle with fibers homeomorphic to $\mathbb{T}^{n-d}$. Each fiber is a minimal invariant set for the Kronecker flow $(\mathbb{T}^n, \Phi_\omega)$, and the restriction of the flow to each fiber is topologically conjugate to the non-resonant Kronecker flow $(\mathbb{T}^{n-d}, \Phi_{\bar{\omega}})$.

\paraga \textit{The Main Results.} Our main concern in this paper is the study of the case $J=\N$. We refer to $\T^\N$ as the infinite torus. 
\medskip

In Section \ref{ue}, we show that for non-resonant Kronecker flows on $\T^\N$ hold the same properties as in the finite-dimensional case. More precisely, the first purpose is to give a proof of the following assertion.
\begin{thmA}
The following properties of a Kronecker flow on $\T^\N$ are equivalent:
\begin{itemize}
	\item[(a)] it is non-resonant;
	\item[(b)] it is uniquely ergodic;
	\item[(c)] it is minimal;
	\item[(d)] it is topologically transitive.
\end{itemize}
\end{thmA}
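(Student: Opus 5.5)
I would prove the cycle of implications (a)$\Rightarrow$(b), (a)$\Rightarrow$(c), (c)$\Rightarrow$(d) and (d)$\Rightarrow$(a), together with (b)$\Rightarrow$(a). The tools are Fourier analysis on the compact abelian group $\T^\N$ and the reduction to finite-dimensional projections. The characters of $\T^\N$ with the product topology are exactly $\chi_\nu(\theta)=e^{\im \nu\cdot\Theta}$ with $\nu\in\Z^{(\N)}$, since a continuous character must factor through finitely many coordinates. By Stone--Weierstrass, their finite linear combinations (trigonometric polynomials) are uniformly dense in $C(\T^\N)$. This is the only genuinely infinite-dimensional ingredient, and I expect it to carry most of the argument.

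For (a)$\Rightarrow$(b), let $\mu$ be any invariant probability measure. For $\nu\neq0$, invariance gives
\begin{equation}
\int\chi_\nu\,d\mu=\int\chi_\nu\circ\Phi^t_\om\,d\mu=e^{\im(\om\cdot\nu)t}\int\chi_\nu\,d\mu .
\end{equation}
Since $\om\cdot\nu\neq0$ by non-resonance, the Fourier coefficient vanishes. Hence $\mu$ agrees with Haar measure on all characters, and by density on $C(\T^\N)$. By Riesz uniqueness, $\mu$ is Haar measure, so the flow is uniquely ergodic.

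For (a)$\Rightarrow$(c), I would argue through projections. Let $U$ be a basic open cylinder, which constrains only coordinates $1,\dots,n$. The projection of any orbit to $\T^n$ is an orbit of the Kronecker flow with frequencies $(\om_1,\dots,\om_n)$. This finite flow is non-resonant because $\mathscr{R}_\om=\{0\}$, so by Property \ref{prop:nonres} it is minimal. Therefore the orbit meets $U$, and every orbit is dense. The implication (c)$\Rightarrow$(d) is trivial.

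For (d)$\Rightarrow$(a) and (b)$\Rightarrow$(a), argue by contraposition. Suppose $0\neq\nu\in\mathscr{R}_\om$, supported in $\{1,\dots,n\}$. The function $f=\chi_\nu$ is continuous, non-constant and invariant under the flow, because $\chi_\nu(\Phi^t_\om\theta)=e^{\im t\,\om\cdot\nu}\chi_\nu(\theta)=\chi_\nu(\theta)$.

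This rules out (d). An orbit is contained in a level set $\{\chi_\nu=c\}$, which is closed and proper since $\chi_\nu$ takes every value on the unit circle. So no orbit is dense, and transitivity fails. By Remark \ref{orb-Oom}, it suffices to treat the orbit of the origin.

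It also rules out (b). Take Haar measure on the closed subgroup $K=\ker\chi_\nu$. This measure is invariant because $K$ is invariant: $\Phi^t_\om(0)\in K$ and translations by elements of $K$ preserve $K$. Its translate to a coset $\theta_0 * K$ with $\chi_\nu(\theta_0)\neq1$ is another invariant probability measure, and the two are distinct since they have disjoint supports. Alternatively, take the product of Haar measure on $\T^{\N\setminus\{1,\dots,n\}}$ with an invariant measure on a proper closed invariant subset of $\T^n$, which exists by the finite-dimensional theory.

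The main step to check carefully is the characterization of continuous characters and the Stone--Weierstrass density in the product topology. Once this is in place, each implication reduces either to a one-line Fourier computation or to the finite-dimensional Property \ref{prop:nonres} through projection onto finitely many coordinates.
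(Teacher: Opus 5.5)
Your proof is correct, but it takes a different route from the paper's.

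\textbf{The paper's route.} It derives both (a)$\Rightarrow$(b) and (a)$\Rightarrow$(c) from a uniform ergodic theorem, Theorem~\ref{avecon}, which says that time averages of continuous functions converge uniformly to the Haar integral. For (b) it then uses Fubini and dominated convergence; for (c) it uses an Urysohn function supported in an unvisited open set. For the converse directions it relies on Lemma~\ref{reduction}. That lemma conjugates a resonant flow, by an element of $\mathrm{Aut}(\T^\N)\cong\mathrm{FGL}_\N(\Z)$, to a flow with $\tilde\om_1=0$; it is built from Proposition~\ref{matrix} and the Euclidean-algorithm Lemma~\ref{abstractreduction}. The paper then writes down explicitly the fibre measures $\mu_x$ and an unvisited cylinder.

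\textbf{Your route.} You show directly that every nonzero Fourier coefficient of an arbitrary invariant measure vanishes. You get minimality by projecting to $\T^n$ and invoking the finite-dimensional Kronecker theorem. For the converses you use the non-constant continuous first integral $\chi_\nu$: its level sets are proper closed invariant sets, and Haar measure on $\ker\chi_\nu$ and on a disjoint coset give two distinct invariant measures.

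\textbf{Comparison.} Your argument is shorter and bypasses the automorphism-reduction machinery entirely. In effect, $\chi_\nu$ is exactly what the paper's reduction exposes as the frozen first coordinate: up to a power, it is $e^{\im\tilde\Theta_1}$ in the reduced coordinates. The paper's route yields more, namely uniform convergence of Birkhoff averages and a reduction lemma it reuses for Proposition~\ref{propprel} and Theorem~\ref{B}. Your (a)$\Rightarrow$(c) depends on Property~\ref{prop:nonres} in finite dimensions, whereas the paper's version needs only Stone--Weierstrass and the Haar measure.

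\textbf{Minor points.} The full characterization of the continuous characters of $\T^\N$, which you flag as the main step to check, is never used. You only need Stone--Weierstrass density of trigonometric polynomials and the values $\int\chi_\nu\,d\mu$ for the Haar measure $\mu$. For the existence half of unique ergodicity, you should say explicitly that Haar measure is invariant because the flow acts by translations.
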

This theorem should be compared with \cite{Jess}, where the author essentially proves that the average of any integrable function along the orbits of a non-resonant Kronecker flow on the infinite torus equals the average over the entire torus with respect to an appropriate invariant integral. The ergodic properties of Kronecker flows on infinite-dimensional tori are also studied in \cite{kozov} and \cite{SakVol}.

It is worth stressing that non-resonant Kronecker flows are in a certain sense \textquotedblleft typical". This was first observed by Bourgain \cite{BOURGAIN200562} who proved that, for all $\delta>0$, the set
\begin{equation}
	\Omega_\delta:=\bigg\{\om \in [-1,1]^\N \; : \; \sum_{j \in \N}\om_j\nu_j \geq \delta \prod_{j \in \N}(1+\nu_j^2j^4)^{-1}, \ \forall \nu \in \Z^{(\N)} \bigg\}
\end{equation}
has measure $\mathfrak{m}(\Omega_\delta)=1-O(\delta)$, where $\mathfrak{m}$ is the product Lebesgue measure on $[-1,1]^\N$.
\medskip

Next we consider the resonant case. In the infinite-dimensional setting, Property \ref{prop:kronecker} does not hold in general: in the above mentioned paper \cite{SakVol}, Sakbaev and Volovich give examples of linear flows on the infinite-dimensional torus where the closures of the orbits are not tori, even though these orbits are accumulated by periodic ones. However, a legitimate question to ask is whether a Kronecker flow on $\T^\N$ is topologically conjugate to another Kronecker flow whose non-vanishing frequencies are rationally independent. In Section \ref{secprel} we present an algorithm to reduce resonant Kronecker flows on the infinite torus in the presence of finitely many independent resonances. More precisely, in Proposition~\ref{propprel} we show that for a Kronecker flow $(\mathbb{T}^\mathbb{N}, \Phi_\omega)$ where the resonance module $\mathscr{R}_\omega$ has finite dimension $1\leq d <\infty$, the system is topologically conjugate to another Kronecker flow $(\mathbb{T}^\mathbb{N}, \Phi_{\tilde{\omega}})$ characterized by a frequency vector $\tilde{\omega}=(0_d, \bar{\omega})$ where $0_d$ is the null vector in $\mathbb{R}^d$ and $\bar{\omega}$ is non-resonant. Aiming to fill the gap left by the assumption of a finite-dimensional resonance module, we establish the following more general result, which connects an algebraic property of the frequency vector with the $\mathscr{C}^0$-conjugacy class to which the resonant Kronecker flow belongs (see Section \ref{thefreecase} for the proof).
	
\begin{thmB}
Let $\om \in \R^\N$ and let $(\T^\N,\Phi_\om)$ be the corresponding Kronecker flow. Then, $\mathscr{M}_{\om}$ is a free abelian group if and only if $(\T^\N,\Phi_\om)$ is topologically conjugate to another Kronecker flow $(\T^\N,\Phi_{\tilde{\om}})$ whose non-vanishing frequencies are rationally independent. If this is the case, $\mathscr{M}_{\om}$ is isomorphic to $\Z^{(\text{supp}(\tilde{\om}))}$.
\end{thmB}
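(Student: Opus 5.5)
The plan is to attach to every Kronecker flow a conjugacy invariant that recovers $\mathscr{M}_\om$, namely its group of continuous eigenvalues. This gives the direction "conjugate to a flow with independent nonzero frequencies $\Rightarrow$ $\mathscr{M}_\om$ free". The converse is proved by building a continuous group automorphism of $\T^\N$ from a well-chosen basis of $\Z^{(\N)}$.

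\emph{Invariant.} Call $\lambda\in\R$ an eigenvalue of $(\T^\N,\Phi_\om)$ if there is a continuous $f:\T^\N\to\C$, $f\not\equiv0$, with $f\circ\Phi^t_\om=e^{\im\lambda t}f$ for all $t$. If $h$ conjugates $\Phi_\om$ to $\Phi_{\tilde\om}$, then $f\mapsto f\circ h^{-1}$ shows that the two flows have the same eigenvalues. I will show that the eigenvalue set of $\Phi_\om$ is exactly $\mathscr{M}_\om$.
\begin{itemize}
\item Each character $\chi_\nu(\theta)=e^{\im\nu\cdot\Theta}$, $\nu\in\Z^{(\N)}$, is continuous because it depends on finitely many coordinates, and it has eigenvalue $\om\cdot\nu$. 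Hence $\mathscr{M}_\om$ is contained in the eigenvalue set.
\item Conversely, let $f$ be an eigenfunction with eigenvalue $\lambda$. Let $\mu$ be the product Haar measure on $\T^\N$; it is translation invariant and hence $\Phi_\om$-invariant.
\item The characters $\chi_\nu$ form an orthonormal basis of $L^2(\mu)$: they separate points, so Stone--Weierstrass applies. Hence some coefficient $c=\langle f,\chi_\nu\rangle$ is nonzero.
\item Invariance of $\mu$ gives $e^{\im\lambda t}c=e^{\im(\om\cdot\nu)t}c$ for all $t$, so $\lambda=\om\cdot\nu\in\mathscr{M}_\om$.
\end{itemize}
Consequently conjugate Kronecker flows have equal frequency modules as subsets of $\R$. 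If the nonzero entries of $\tilde\om$ are rationally independent, then $\mathscr{M}_{\tilde\om}\cong\Z^{(\text{supp}(\tilde\om))}$ is free. This proves the first implication together with the final isomorphism claim.

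\emph{Converse.} Suppose $\mathscr{M}_\om$ is free. The surjection $\Z^{(\N)}\ni\nu\mapsto\om\cdot\nu\in\mathscr{M}_\om$ has kernel $\mathscr{R}_\om$ (Remark \ref{firstisothm}). Since the target is free, the sequence splits: choose preimages $e'_k$ of a basis $(b_k)_{k\in K}$ of $\mathscr{M}_\om$, so that $\Z^{(\N)}=\mathscr{R}_\om\oplus\langle e'_k\rangle$. The group $\mathscr{R}_\om$ is free, being a subgroup of a free group. Joining a basis of $\mathscr{R}_\om$ with $(e'_k)$ therefore gives a basis $(f_i)_{i\in\N}$ of $\Z^{(\N)}$. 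It is countably infinite, since $\Z^{(\N)}$ has infinite rank.

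Let $B$ be the automorphism of $\Z^{(\N)}$ with $B e_i=f_i$, and define
\[
\mathscr{A}:\T^\N\to\T^\N,\qquad \mathscr{A}(\theta)_i:=f_i\cdot\Theta+2\pi\Z .
\]
This is well defined because each $f_i$ is a finite integer vector. It is continuous in the product topology because each coordinate depends on finitely many coordinates of $\theta$. It is a group homomorphism, and its inverse is built in the same way from the rows of $B^{-1}$. A direct computation gives $\mathscr{A}\circ\Phi^t_\om=\Phi^t_{\tilde\om}\circ\mathscr{A}$ with $\tilde\om_i=\om\cdot f_i$. This frequency vanishes when $f_i\in\mathscr{R}_\om$ and equals some $b_k$ otherwise, so the nonvanishing frequencies are rationally independent.

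\emph{Main obstacle.} The delicate step is the invariance argument, namely that every continuous eigenvalue lies in $\mathscr{M}_\om$. It needs the $L^2$-completeness of characters on the infinite torus, which I would justify via Stone--Weierstrass on the compact metrisable group $\T^\N$. A secondary care point is verifying that $\mathscr{A}^{-1}$ is continuous. This follows because the inverse also has finitely supported integer rows, so no compactness argument is needed.
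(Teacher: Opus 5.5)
Your proof is correct. The direction ``$\mathscr{M}_\om$ free $\Rightarrow$ conjugacy'' is essentially the paper's. You split $\Z^{(\N)}=\mathscr{R}_\om\oplus\langle e'_k\rangle$ using preimages of a basis of $\mathscr{M}_\om$, and then take a basis of $\mathscr{R}_\om$ together with the $e'_k$ as the rows of an element of $\mathrm{FGL}_\N(\Z)$.

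The other direction follows a genuinely different route.
\begin{itemize}
\item \emph{The paper's route.} It goes through Lemma~\ref{lemmaB}. It normalizes $h(0)=0$ and shows that $h$ restricts to a group isomorphism $\mathcal{O}_{\tilde\om}\to\mathcal{O}_\om$. It then linearizes this via the lift of Theorem~\ref{lifts}, obtaining integer matrices with $\om=A\tilde\om$ and $\tilde\om=B\om$. Finally it needs the rational independence of $\tilde\om$ to force $BA=I$ on $\mathrm{supp}(\tilde\om)$, and hence $\mathscr{M}_\om=\mathscr{M}_{\tilde\om}$.
\item \emph{Your route.} You use the group of continuous eigenvalues as a conjugacy invariant. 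You identify it with $\mathscr{M}_\om$ by Fourier analysis against the Haar measure constructed in Section~\ref{ue}.
\end{itemize}

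What your route buys:
\begin{itemize}
\item A stronger statement. Any two topologically conjugate Kronecker flows have literally the same frequency module as a subset of $\R$, with no normalization of $h$ and no independence hypothesis.
\item It avoids the lift machinery entirely.
\item It handles cleanly the step the paper dispatches with ``similarly'', namely that each $\tilde\om_i$ is a finite integer combination of the $\om_j$. The orbit closure $\mathcal{O}_\om$ is not a priori a coordinate subtorus, so the lift argument of Proposition~\ref{matrix} does not apply verbatim to $h^{-1}$ there; one needs an extension-of-characters fact. For you this is immediate: $\chi_{e_i}\circ h^{-1}$ is a continuous eigenfunction of $\Phi_\om$ with eigenvalue $\tilde\om_i$.
\end{itemize}

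What the paper's route buys: explicit integer matrices relating $\om$ and $\tilde\om$, and the structural fact that the conjugacy restricts to an isomorphism of topological groups between orbit closures. Both arguments are concrete versions of the Pontryagin-duality shortcut the paper mentions after Lemma~\ref{lemmaB}.

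One small point to add: to get a nonzero Fourier coefficient you need $f\neq 0$ in $L^2(\mu)$. This uses that $\mu$ gives positive mass to every nonempty open cylinder, which is immediate from its construction.
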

In particular, if the frequency module is free, $\T^\N$ is fibered by invariant, minimal embedded tori such that the projection of the flow on each fiber is conjugate to a non-resonant Kronecker flow. This implies that the closure of any orbit is still a torus, of possibly infinite dimension. 

Note that Property \ref{prop:kronecker} of the finite-dimensional case follows from the fact that, as already pointed out in Remark \ref{remfinfree}, if the frequency vector has finite support, then $\mathscr{M}_\om$ is finitely generated, and hence, by the structure theorem for finitely generated abelian groups without torsion, it is free.

Furthermore, in the presence of finitely many resonances, Proposition~\ref{propprel} provides a conjugacy to a flow $(\mathbb{T}^\mathbb{N}, \Phi_{\tilde{\omega}})$ whose non-vanishing frequencies are rationally independent. By Theorem~\ref{B}, this ensures that $\mathscr{M}_\omega$ must be a free abelian group, even though it is necessarily no longer finitely generated.

\medskip

Nevertheless, in the general case, linear dynamics on an infinite torus can be much more complex. In Theorem \ref{thmsol} of Section \ref{ratflow}, we construct a special class of Kronecker flows with the property that the closure of any orbit is a solenoid (see Definition \ref{Solenoid}), that is a compact space locally homeomorphic to the product of an interval and a Cantor set. By Lemma \ref{modsol}, the frequency module of any Kronecker flow in such class is a non-free subgroup of $(\Q,+)$.

Next, we show that the closure of any orbit induced by a Kronecker flow on the infinite torus is homeomorphic to the Pontryagin dual of the associated frequency module (Proposition \ref{mainprel}). At the end of Section \ref{orbit-dual}, we use this result, together with Theorem \ref{thmsol} and classical results from Pontryagin's theory of locally compact abelian groups, to prove the following statement.
\begin{thmC}
	Let $I$ be a countable set of indices and $(G_i)_{i \in I}$ a family of subgroups of $(\mathbb{Q},+)$. Then, there exists a Kronecker flow whose frequency module is isomorphic to $\bigoplus_{i \in I} G_i$. Moreover, the closure of any orbit is homeomorphic to a product $\prod_{i \in I} X_i$, where each factor $X_i$ is a circle if $G_i$ is free, and a solenoid otherwise.
\end{thmC}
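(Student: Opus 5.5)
Our plan has two parts: first build a frequency vector $\om\in\R^\N$ with $\mathscr{M}_\om\cong\bigoplus_{i\in I}G_i$, then identify orbit closures through Pontryagin duality. By Remark \ref{orb-Oom} it suffices to study the orbit through the origin. We use the fact announced above (Proposition \ref{mainprel}) that this orbit closure is homeomorphic to the Pontryagin dual $\widehat{\mathscr{M}_\om}$, where $\mathscr{M}_\om$ carries the discrete topology. We then use the standard duality facts: the dual of a direct sum of discrete groups is the product of the duals, $\widehat{\bigoplus_i G_i}\cong\prod_i\widehat{G_i}$, and this holds as topological groups with the product topology.

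For the construction we choose real numbers $(\alpha_i)_{i\in I}$ that are linearly independent over $\Q$; for instance $\alpha_i=\log p_i$ with distinct primes $p_i$. For each $i$ we pick a generating sequence $(q_{i,k})_{k\in\N}\subset\Q$ of $G_i$; this is possible since $G_i$ is countable, and we repeat entries or pad with zeros if needed. We then enumerate the countable family $(\alpha_i q_{i,k})_{i,k}$ as a single sequence $\om\in\R^\N$ via a bijection $I\times\N\to\N$. Then $\mathscr{M}_\om=\sum_i\alpha_iG_i$, and this sum is direct because the $\alpha_i$ are $\Q$-independent. Indeed, a relation $\sum_i\alpha_i g_i=0$ with $g_i\in G_i\subset\Q$ forces every $g_i=0$. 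Since $g\mapsto\alpha_ig$ is injective, $\alpha_iG_i\cong G_i$, and so $\mathscr{M}_\om\cong\bigoplus_iG_i$. Alternatively, one could place the solenoidal flows from Theorem \ref{thmsol} on disjoint blocks of coordinates. However, that approach only gives a product of orbit closures, which are generally smaller than the product of the factors, so the $\Q$-independent scaling is what makes the product structure work.

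Next we identify each factor $X_i:=\widehat{G_i}$. If $G_i$ is free, it is a rank-one free group, so $G_i\cong\Z$ and $\widehat{G_i}\cong\T$, a circle; the trivial group would instead give a point factor, which we treat as degenerate or exclude. If $G_i$ is not free, it is a non-cyclic subgroup of $\Q$. We realize $G_i$ as the frequency module of one of the flows of Theorem \ref{thmsol}, using Lemma \ref{modsol} together with the structure of rank-one groups: $G_i$ is an increasing union $\bigcup_n\frac1{a_1\cdots a_n}\Z$ up to isomorphism. Proposition \ref{mainprel} applied to that flow identifies $\widehat{G_i}$ with its orbit closure, which is a solenoid. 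Finally, the homeomorphism $\overline{\mathcal{O}}\cong\widehat{\mathscr{M}_\om}\cong\prod_iX_i$ completes the proof.

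The main obstacle is the non-free factor. We must check that every non-free subgroup of $\Q$ arises as the frequency module of some flow in the class of Theorem \ref{thmsol}, that is, that it is isomorphic to a group of the form $\bigcup_n (a_1\cdots a_n)^{-1}\Z$. First we rescale $G_i$ so that it contains $1$. Then we enumerate its elements and take $a_n$ to be successive least common multiples of the denominators; non-freeness guarantees that infinitely many $a_n>1$. Beyond that step, we only need to verify carefully that the duality isomorphism respects the product topology, which holds because the topology on a dual is compact-open and pointwise convergence on a discrete group gives the product topology.
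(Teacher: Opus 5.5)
Your proposal is correct and follows essentially the paper's route. You scale each rank-one factor by $\Q$-linearly independent reals so that $\mathscr{M}_\om$ splits as $\bigoplus_i G_i$ (the content of Proposition \ref{unique}), then identify each factor by combining Proposition \ref{mainprel}, the duality $\bigl(\bigoplus_i G_i\bigr)^*\cong\prod_i G_i^*$ of Theorem \ref{dirsumdual}, and Lemma \ref{modsol} with Theorem \ref{thmsol} (Remark \ref{dualsol}). The differences are cosmetic: you use $\log p_i$ instead of the paper's $\sqrt{\mathbf{p}_n}$ and $\pi^k$, and you scale arbitrary generators of $G_i$, invoking Lemma \ref{modsol} only at the identification step. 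Your remark that a trivial $G_i$ gives a point rather than a circle is a fair correction. In your lcm sketch, the $a_n$ should be the ratios of successive distinct lcms (so that $a_1\cdots a_n$ are the lcms and $a_n>1$), not the lcms themselves.
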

This is an extension of the recent result in \cite{SakVol} concerning the existence of Kronecker flows on the infinite torus inducing orbits whose closure is not a torus (see Remark \ref{rmksakvol}), as we exhibit a well-defined class and we provide a more refined understanding of the topological structure.
\medskip

In Section \ref{ssbo} we apply these results to the Benjamin-Ono equation, namely, we prove the following
\begin{thmBO}
Consider the Benjamin-Ono equation for a real-valued function $u(t,x)$ that is $2\pi$-periodic in space:
\begin{equation}
u_t=\mathrm{H}(u_{xx})-2uu_x, \quad x \in \mathbb{T}, \; t \in \mathbb{R},
\end{equation}
where $\mathrm{H}(\cdot)$ denotes the Hilbert transform defined in \eqref{ht0}. Let
\begin{equation}
\Psi:L_0^2(\mathbb{T})\ni u \mapsto z \in h^{1/2}
\end{equation}
be its Birkhoff map \cite{BO}. Given $z\in h^{1/2}$, if there exists $\beta\in \mathbb{R}\setminus \mathbb{Q}$ such that $|z_j|^2\in \beta \mathbb{Q}\setminus\{0\}$ for all $j \in \mathbb{N}$, then the closure of the orbit with initial data $u = \Psi^{-1}(z)$ under the Benjamin-Ono flow is homeomorphic to the product of a circle and a solenoid.
\end{thmBO}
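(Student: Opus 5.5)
The plan is to reduce the statement to Theorem C, via Proposition \ref{mainprel} (the closure of an orbit is homeomorphic to the Pontryagin dual of $\mathscr{M}_\om$). First, as in Section \ref{motivations}, the Benjamin-Ono flow on the invariant set $\Psi^{-1}(\mathcal{T}_I)$, with $I_j=|z_j|^2$, is topologically conjugate by $\mathscr{H}=\Psi^{-1}\circ\Xi$ to the Kronecker flow $(\T^{\N},\Phi_\om)$, where $\om_n=\om_n(I)$. The hypothesis gives $I_j\neq 0$ for every $j$, so $\text{supp}(I)=\N$. By Remark \ref{orb-Oom}, the closure of the orbit of $u=\Psi^{-1}(z)$ is homeomorphic to the closure of the orbit of the origin of $\Phi_\om$, which is homeomorphic to $\widehat{\mathscr{M}_\om}$. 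So everything reduces to computing $\mathscr{M}_\om$ up to isomorphism. The target is
\[
\mathscr{M}_\om\cong \Z\oplus G ,
\]
with $G$ a non-free subgroup of $(\Q,+)$. Then $\widehat{\mathscr{M}_\om}\cong \widehat{\Z}\times\widehat{G}$ is a circle times a solenoid, by the duality results used in the proof of Theorem C (Theorem \ref{thmsol} and Lemma \ref{modsol}).

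Second, I will use the explicit frequencies of Benjamin-Ono in Birkhoff coordinates from \cite{BO}:
\[
\om_n(I)=n^2-2\sum_{k\in\N}\min(n,k)\,I_k .
\]
Writing $\min(n,k)=n-(n-k)_+$ and $S:=\sum_k I_k$ (finite, since $z\in h^{1/2}$), this becomes
\[
\om_n=n^2-2nS+2\sum_{k<n}(n-k)I_k .
\]
By hypothesis $I_k=\beta q_k$ with $q_k\in\Q\setminus\{0\}$, so $\om_n=n^2-2nS+2\beta r_n$, where $r_n:=\sum_{k<n}(n-k)q_k\in\Q$. The term $-2nS$ can be removed by the integer change of generators $\om_n\mapsto \om_n-n\om_1$ for $n\ge2$. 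This is a unimodular triangular substitution, so $\mathscr{M}_\om$ is generated by $\om_1=1-2S$ together with the elements
\[
\eta_n:=n^2-n+2\beta\,(r_n-nr_1)\in \Z+\beta\Q .
\]
Since $\beta\notin\Q$, the group $\Z+\beta\Q$ is isomorphic to $\Z\oplus\Q$. The subgroup generated by the $\eta_n$ projects onto $2\Z$ in the first factor. I would then show that the second coordinates generate a subgroup $G\subseteq\Q$ that is not finitely generated, by using the freedom in the $q_k$: the second differences of $r_n$ recover the $q_k$ themselves. After splitting off the free part, the span of the $\eta_n$ is isomorphic to $\Z\oplus G$.

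The main obstacle is the generator $\om_1=1-2S$, and I expect this is where the plan may need adjusting. If $S$ is a rational combination of $1$ and $\beta$ (for instance because $\sum_k q_k$ is rational), then $\om_1$ is absorbed into $\Z\oplus G$ after a further rational change of coordinates, and the rank stays $2$. In general, however, $S=\beta\sum_k q_k$ may be $\Q$-independent of $1$ and $\beta$, which would produce an extra free summand $\Z$, i.e.\ an extra circle factor. So I would first check the precise normalization of the frequency map in \cite{BO}, in particular whether the linear term $nS$ is actually present or is removed by the Galilean/translation symmetry. If it is present, I would either restrict to the case where $S\in\Q+\beta\Q$ or note that the statement must then read ``product of circles and a solenoid''. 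In either case, once the structure $\Z^{m}\oplus G$ is established, Theorem C closes the argument.
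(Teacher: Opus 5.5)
The obstacle you flag at the end is real. It is not a normalization artefact, and it is exactly where the paper's own proof breaks. As written, the statement is false.

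The linear term is present in \eqref{BOfreq}. No symmetry removes it without changing the curve $t\mapsto u(t)$ whose closure is in question.

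The paper's Proposition \ref{split} treats $R$ as a subgroup of $(\Q,+)$, i.e.\ it assumes $\sum_k\min(j,k)s_k\in\Q$. But this number equals $j\sum_k s_k-\sum_{k<j}(j-k)s_k$, and a convergent series of rationals need not have rational sum. Proposition \ref{snotfree} even produces $g_n=\sum_{k>n}s_k$ as an element of $R$.

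In your notation ($I_k=|z_k|^2=\beta q_k$, $S=\sum_k I_k$), take first and then second differences of the $\om_n$. This change of generators is invertible over $\Z$ and gives
\[
\mathscr{M}_\om=\Z\,(1-2S)+\sum_{n\in\N}\Z\,(2+2I_n).
\]
This yields a dichotomy:
\begin{itemize}
\item If $S\in\Q+\beta\Q$, then $\mathscr{M}_\om\subset\Q+\beta\Q$. Its projection to the rational coordinate is finitely generated, hence cyclic, so $\mathscr{M}_\om\cong\Z\oplus K'$. Here $K'$ contains the non-cyclic group generated by the $2\beta(q_n-q_m)$, and the conclusion holds.
\item If $1,\beta,S$ are $\Q$-linearly independent, the sum above is direct over $\Z(1-2S)$, so $\mathscr{M}_\om\cong\Z^2\oplus K$. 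The orbit closure is then $\T^2\times\sigma_{\textbf{a}}$. This is not homeomorphic to a circle times a solenoid: compare covering dimensions ($3$ versus $2$), or use Section \ref{stocazzo}.
\end{itemize}
The second case does occur. Take $\beta=\sqrt2$ and $|z_k|^2=\sqrt2\cdot 10^{-k!}$. Then $z\in h^{1/2}$, and $S=\sqrt2\,L$ with $L=\sum_{k}10^{-k!}$ Liouville's constant. If $\sqrt2 L=a+b\sqrt2$ with $a,b\in\Q$, then $L$ would be algebraic, so $S\notin\Q+\sqrt2\,\Q$.

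So no argument can close your gap in general. Your proposed restriction to $S\in\Q+\beta\Q$ is the right repair. In the complementary case the correct answer is a $2$-torus times a solenoid.

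Within the corrected setting, one step of your plan needs rewording. There is no ``freedom in the $q_k$'': they are fixed by $z$. You do not need any freedom, though. Since $\sum_k q_k$ converges and every $q_k\neq 0$, the $q_k$ take infinitely many values and tend to $0$. Your second differences $\eta_{n+1}-2\eta_n+\eta_{n-1}=2+2\beta q_n$ then show that the rank-one part contains the nonzero elements $2\beta(q_n-q_m)$, which accumulate at $0$. Lemma \ref{notfree}, after rescaling by $2\beta$, then gives non-freeness for every admissible $z$. The rest of your reduction coincides with the paper's: the conjugacy to a Kronecker flow, Proposition \ref{mainprel}, and the identification of non-free rank-one duals with solenoids via Lemma \ref{modsol} and Remark \ref{dualsol}.
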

It would be of interest to determine whether this family of almost-periodic solutions corresponds to specific physical features, such as a novel form of turbulence.
\medskip

We conclude this work by discussing a general classification problem for Kronecker flows on the infinite torus. First, we prove the following statement.

\begin{thmD}
	Given $\om,\tilde{\om} \in \R^\N$, let $(\mathbb{T}^\mathbb{N}, \Phi_\omega)$ and $(\mathbb{T}^\mathbb{N}, \Phi_{\tilde{\omega}})$ be two Kronecker flows on the infinite torus. The closure of any orbit of $\Phi_\omega$ is homeomorphic to the closure of any orbit of $\Phi_{\tilde{\omega}}$ if and only if the associated frequency modules $\mathscr{M}_\omega$ and $\mathscr{M}_{\tilde{\omega}}$ are isomorphic as abelian groups.
\end{thmD}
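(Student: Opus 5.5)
Both directions rest on Proposition \ref{mainprel}, which will be stated in Section \ref{orbit-dual}: the closure $\overline{\mathcal{O}_\om}$ of the orbit through the origin is homeomorphic, in fact isomorphic as a topological group, to the Pontryagin dual $\widehat{\mathscr{M}_\om}$ of the frequency module with its discrete topology. By Remark \ref{orb-Oom} it suffices to compare orbit closures through the origin. The idea is that $\overline{\mathcal{O}_\om}$ is a compact connected subgroup of $\T^\N$, and its character group is the image of $\Z^{(\N)}$ under restriction of characters. A character $\theta\mapsto e^{\im \nu\cdot\Theta}$ is trivial on the orbit closure iff $\nu\cdot\om=0$, so this image is $\Z^{(\N)}/\mathscr{R}_\om\cong\mathscr{M}_\om$ (Remark \ref{firstisothm}). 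Pontryagin duality then gives the claim.

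\emph{Direction ($\Leftarrow$).} If $\mathscr{M}_\om\cong\mathscr{M}_{\tilde\om}$ as discrete groups, then their duals are isomorphic as topological groups, hence homeomorphic. Combining with Proposition \ref{mainprel} proves this direction immediately.

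\emph{Direction ($\Rightarrow$).} This is the main obstacle, since a mere homeomorphism $h:\widehat{\mathscr{M}_\om}\to\widehat{\mathscr{M}_{\tilde\om}}$ need not be a homomorphism. I would proceed in four steps.
\begin{enumerate}
\item Normalize $h$ so that $h(0)=0$ by composing with a translation, which is a homeomorphism of a topological group.
\item Use that both spaces are compact connected abelian groups, being duals of torsion-free groups (Remark \ref{cwt}). For such groups a theorem of Scheffer, refining results of van Kampen, asserts that every pointed continuous map between compact connected abelian groups is homotopic to a continuous homomorphism. It therefore suffices to find a homotopy invariant recovering $\mathscr{M}_\om$.
\item Take as this invariant the first Čech cohomology $\check H^1(X;\Z)$, which is a homeomorphism invariant of compact spaces. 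For a compact connected abelian group $X$, $\check H^1(X;\Z)$ is naturally isomorphic to the character group $\widehat X$: it is $[X,\T]$, and every continuous map to $\T$ is homotopic to a unique character.
\item Conclude that $\check H^1(\widehat{\mathscr{M}_\om};\Z)\cong\mathscr{M}_\om$. The homeomorphism $h$ then induces $\mathscr{M}_{\tilde\om}\cong\mathscr{M}_\om$.
\end{enumerate}

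To make the identification in step 3 concrete, I would use the inverse-limit description. Since $\mathscr{M}_\om$ is countable, write it as an increasing union of finitely generated, hence free finite-rank, subgroups $F_n$. Then $\widehat{\mathscr{M}_\om}=\varprojlim \widehat{F_n}$ with each $\widehat{F_n}$ a finite-dimensional torus. Continuity of Čech cohomology gives $\check H^1=\varinjlim H^1(\T^{r_n};\Z)=\varinjlim F_n=\mathscr{M}_\om$, with the transition maps agreeing with the inclusions $F_n\hookrightarrow F_{n+1}$ by naturality. Verifying this naturality is the technical heart of the argument; the remaining steps are formal.
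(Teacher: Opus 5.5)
Your argument is correct, and the easy direction ($\Leftarrow$) coincides with the paper's. For ($\Rightarrow$), however, you take a different route.

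The paper treats this direction as a black box. It cites Scheinberg's theorem that two connected locally compact abelian groups are homeomorphic if and only if they are isomorphic as topological groups. This upgrades the homeomorphism $\mathcal{O}_\om\to\mathcal{O}_{\tilde\om}$ to a topological isomorphism. The paper then dualizes via Proposition \ref{D0} and Pontryagin duality: $\mathscr{M}_\om\cong\mathscr{M}_\om^{**}\cong\mathcal{O}_\om^{*}\cong\mathcal{O}_{\tilde\om}^{*}\cong\mathscr{M}_{\tilde\om}$. Scheinberg's theorem needs connectedness, which the paper leaves implicit; it holds because $\mathcal{O}_\om$ is the closure of a continuous image of $\R$.

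You instead exhibit an explicit homeomorphism invariant that recovers the dual group:
$\check H^1(\mathcal{O}_\om;\Z)\cong\varinjlim H^1(\widehat{F_n};\Z)\cong\varinjlim F_n=\mathscr{M}_\om$.
This uses $\mathcal{O}_\om\cong\varprojlim\widehat{F_n}$ and the continuity of \v{C}ech cohomology. In effect, you prove the compact case of Scheinberg's theorem directly.

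What your route buys is self-containedness and a sharper, canonical statement: the frequency module is literally the first \v{C}ech cohomology of the orbit closure. What the paper's route buys is brevity, at the price of citing a result valid for all connected locally compact abelian groups.

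Once steps 3--4 are carried out through the inverse limit, your steps 1--2 are unnecessary. Any homeomorphism induces an isomorphism on $\check H^1$ by functoriality alone. So you need neither the normalization $h(0)=0$ nor Scheffer's homotopy-to-a-homomorphism theorem.
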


Finally, we show how to associate to each countable abelian group $G$ without torsion a Kronecker flow $(\T^\N,\Phi_\om)$ in such a way that $G$ is isomorphic to $\mathscr{M}_\om$. Therefore, by Remark \ref{cwt}, Theorem \ref{E} implies that completely classifying Kronecker flows on $\T^\N$ up to homeomorphism of the closure their orbits is a highly complex issue, as it is equivalent to the classification problem for countable abelian groups without torsion. To date, a complete classification is known only for those groups having rank\footnote{See Appendix \ref{AAB} for the definition of rank and other classical results from the theory of abelian groups.} $1$, namely, additive subgroups of the rationals (see \cite{baer}, \cite{beau} and \cite{Fuchs2015}) which corresponds to the class of Kronecker flows (constructed in Section \ref{orbit-dual}) whose orbits are dense in a product of circles and solenoids, in the sense of Theorem \ref{productcirclesol}.

\section{Preliminaries}\label{secprel}
In this section, we provide a characterization of the group $\mathrm{Aut}(\mathbb{T}^\mathbb{N})$ of automorphisms of the topological group $\mathbb{T}^\mathbb{N}$ (see Proposition \ref{matrix}) and apply this result to develop an algorithm for reducing resonant Kronecker flows (see Proposition \ref{propprel}).

\paraga \textit{Characterization of $\mathrm{Aut}(\T^\N)$.}
\begin{notation}\label{notmap}
	We shall adopt the following notations throughout this section:
	\begin{itemize}
		\item For any $j \in \mathbb{N}$, $q_j: \mathbb{T}^\mathbb{N} \to \mathbb{T}$ denotes the projection onto the $j$-th factor, namely, $q_j(\theta) = \theta_j$.
		\item For any $m \in \mathbb{N}$, $q^m: \mathbb{T}^\mathbb{N} \to \mathbb{T}^m$ denotes the projection $(\theta_j)_{j \in \mathbb{N}} \mapsto (\theta_j)_{j=1}^m$.
		\item For any $m \in \mathbb{N}$, $i^m: \mathbb{R}^m \to \mathbb{R}^\mathbb{N}$ denotes the inclusion $(\Theta_j)_{j=1}^m \mapsto (\Theta_1, \dots, \Theta_m, 0, \dots)$.
		\item For any $m \in \mathbb{N}$, $\pi_m: \mathbb{R}^m \to \mathbb{T}^m$ denotes the standard quotient projection $\Theta \mapsto \Theta + (2\pi\mathbb{Z})^m$.
		\item $\pi_\mathbb{N}: \mathbb{R}^\mathbb{N} \to \mathbb{R}^\mathbb{N}/(2\pi\mathbb{Z})^\mathbb{N}$ denotes the projection $\Theta \mapsto \Theta + (2\pi\mathbb{Z})^\mathbb{N}$.
	\end{itemize}
\end{notation}

\begin{rmk}
	All the spaces defined above are endowed with the product topology. This choice ensures that all the maps introduced in Notation \ref{notmap} are continuous.
\end{rmk}

We first show that $\mathbb{T}^\mathbb{N}$ is naturally isomorphic as a topological group to the quotient $\mathbb{R}^\mathbb{N}/(2\pi\mathbb{Z})^\mathbb{N}$, where $\mathbb{R}^\mathbb{N}$ is endowed with the product topology and the quotient carries the induced quotient topology.

\begin{prop}[Infinite torus as quotient space]\label{quoz}
	The torus $\mathbb{T}^{\mathbb{N}}$ is isomorphic as a topological group to the quotient $\mathbb{R}^{\mathbb{N}} / (2\pi\mathbb{Z})^{\mathbb{N}}$.
\end{prop}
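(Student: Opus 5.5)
The plan is to build the isomorphism directly from the coordinatewise quotient maps and then check that it is a homeomorphism. Let $p:\R^\N\to\T^\N$ be the map $p(\Theta)=(\Theta_j+2\pi\Z)_{j\in\N}$. It is a group homomorphism for the componentwise operation $*$, and it is surjective, since every $\theta\in\T^\N$ has a lift $\Theta$ as in Notation \ref{thetalift}. Its kernel is exactly $(2\pi\Z)^\N$. By the first isomorphism theorem, $p$ induces a group isomorphism $\bar p:\R^\N/(2\pi\Z)^\N\to\T^\N$ with $\bar p\circ\pi_\N=p$.

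For continuity, note that $p$ is continuous for the product topologies, because each composite $q_j\circ p$ is $\Theta\mapsto\Theta_j+2\pi\Z$, which is continuous. By the universal property of the quotient topology, $\bar p$ is therefore continuous.

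The remaining step, and the only real point, is continuity of $\bar p^{-1}$. I would show that $p$ is an open map. It suffices to check this on basic open cylinders $U=\prod_j U_j$ with $U_j=\R$ for all but finitely many $j$. Then $p(U)=\prod_j \pi_1(U_j)$, where $\pi_1:\R\to\T$ is the one-dimensional projection. Each $\pi_1(U_j)$ is open because $\pi_1$ is an open map. Moreover $\pi_1(U_j)=\T$ whenever $U_j=\R$, so $p(U)$ is an open cylinder. For any open $V$ in the quotient, $\pi_\N^{-1}(V)$ is open and $\bar p(V)=p(\pi_\N^{-1}(V))$, so $\bar p$ is open. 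This makes $\bar p$ a homeomorphism and hence an isomorphism of topological groups.

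A cleaner alternative, if desired, avoids openness entirely. The quotient $\R^\N/(2\pi\Z)^\N$ is Hausdorff, since $(2\pi\Z)^\N=\prod_j 2\pi\Z$ is closed in $\R^\N$. I would then produce a compactness argument, for instance by exhibiting $\T^\N$ as a continuous image of the compact set $[0,2\pi]^\N$ through $\pi_\N$ composed with $\bar p^{-1}$. I expect the direct openness argument above to be shorter, so I would use it.
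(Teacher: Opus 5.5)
Your main argument is correct and is essentially the paper's proof. Continuity of $\bar p$ comes from the quotient topology, and openness follows because $\pi_\N$ followed by $\bar p$ sends basic open cylinders of $\R^\N$ to open cylinders of $\T^\N$.

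The compactness alternative you set aside is muddled as written: composing with $\bar p^{-1}$ already assumes the continuity you are trying to prove. The correct version runs the other way. The quotient equals $\pi_\N([0,2\pi]^\N)$, so it is compact, and $\T^\N$ is Hausdorff; hence the continuous bijection $\bar p$ is a homeomorphism.
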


\begin{proof}
	The algebraic group isomorphism is immediate. To prove that the map
	$$ \Phi: \mathbb{R}^{\mathbb{N}}/(2\pi\mathbb{Z})^{\mathbb{N}} \to \mathbb{T}^{\mathbb{N}}, \quad \Theta + (2\pi\mathbb{Z})^{\mathbb{N}} \mapsto (\Theta_j + 2\pi\mathbb{Z})_{j \in \mathbb{N}} $$
	is a homeomorphism, we recall that a set $U \subseteq \mathbb{R}^{\mathbb{N}}/(2\pi\mathbb{Z})^{\mathbb{N}}$ is open if and only if its preimage $\pi_\mathbb{N}^{-1}(U)$ is open in $\mathbb{R}^{\mathbb{N}}$.
	
Since the product topology on $\mathbb{T}^\mathbb{N}$ is generated by the subbase of open cylinders of the form $q_j^{-1}(A)$ (with $A \subseteq \mathbb{T}$ open), it suffices to check that their preimages under $\Phi$ are open. To this end, observe that
$$ \pi_\mathbb{N}^{-1}\left(\Phi^{-1}(q_j^{-1}(A))\right) = \{ \Theta \in \mathbb{R}^\mathbb{N} \; : \; \Theta_j \in \pi^{-1}(A) \}, $$
where $\pi: \mathbb{R} \to \mathbb{T}$ is the standard projection. This set is an open cylinder in $\mathbb{R}^\mathbb{N}$; hence, $\Phi$ is continuous.
	
Conversely, the image under $\Phi \circ \pi_{\mathbb{N}}$ of any open cylinder in $\mathbb{R}^{\mathbb{N}}$ is an open cylinder in $\mathbb{T}^{\mathbb{N}}$. Since $\Phi \circ \pi_{\mathbb{N}}$ maps the basis of the product topology of $\mathbb{R}^{\mathbb{N}}$ to the basis of $\mathbb{T}^{\mathbb{N}}$, it follows that $\Phi \circ \pi_{\mathbb{N}}$ is an open map. By the properties of the quotient topology, this implies that $\Phi$ is an open map as well, and thus a homeomorphism.
\end{proof}

\begin{rmk}[Not a covering map]\label{notcov}
	To avoid misunderstandings, we stress that the previous proposition does not imply that $\pi_{\mathbb{N}}$ is a covering map. In fact, $\pi_{\mathbb{N}}$ cannot be a local homeomorphism, since $\mathbb{R}^{\mathbb{N}}$ is simply connected, whereas inside each cylinder of the torus one can find non-contractible loops.
\end{rmk}

\begin{rmk}[Abuse of notation]\label{abuse}
	In what follows, we shall identify $\mathbb{T}^{\mathbb{N}}$ with $\mathbb{R}^{\mathbb{N}} / (2\pi\mathbb{Z})^{\mathbb{N}}$ and $\mathbb{T}^m$ with $\mathbb{R}^m / (2\pi\mathbb{Z})^m$ in the sense of Proposition \ref{quoz}.
\end{rmk}

\begin{thm}[Lifts]\label{lifts}
	Given a continuous function $F : \mathbb{T}^{\mathbb{N}} \to \mathbb{T}^{\mathbb{N}}$ such that $F(0) = 0$, there exists a unique continuous function $\hat{F} : \mathbb{R}^{\mathbb{N}} \to \mathbb{R}^{\mathbb{N}}$, which we call \textquotedblleft lift", such that $\pi_{\mathbb{N}} \circ \hat{F} = F \circ \pi_{\mathbb{N}}$ and $\hat{F}(0) = 0$.
\end{thm}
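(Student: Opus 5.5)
Since $\mathbb{R}^{\mathbb{N}}$ carries the product topology, a map into $\mathbb{R}^{\mathbb{N}}$ is continuous if and only if each component is continuous. I would therefore reduce the statement to the one-dimensional lifting problem for each component, and then verify that the component lifts assemble into a map commuting with the projections.

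\emph{Construction.} Fix $j\in\mathbb{N}$ and consider the continuous map
$$f_j:=q_j\circ F\circ \pi_{\mathbb{N}}:\mathbb{R}^{\mathbb{N}}\to\mathbb{T}.$$
The space $\mathbb{R}^{\mathbb{N}}$ with the product topology is path connected, locally path connected and simply connected; indeed it is contractible via the straight-line homotopy $(s,\Theta)\mapsto s\Theta$, which is continuous because each component is. The standard projection $\pi:\mathbb{R}\to\mathbb{T}$ is a covering map. By the lifting criterion for covering spaces, $f_j$ therefore admits a unique continuous lift $\hat f_j:\mathbb{R}^{\mathbb{N}}\to\mathbb{R}$ with $\pi\circ\hat f_j=f_j$ and $\hat f_j(0)=0$. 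The base point condition is consistent because $f_j(0)=q_j(F(0))=0$. I then set $\hat F:=(\hat f_j)_{j\in\mathbb{N}}$. This map is continuous, since all its components are, and it satisfies $\hat F(0)=0$. Moreover, for every $j$ we have $q_j\circ\pi_{\mathbb{N}}\circ\hat F=\pi\circ\hat f_j=q_j\circ F\circ\pi_{\mathbb{N}}$. Under the identification of Proposition \ref{quoz} this gives $\pi_{\mathbb{N}}\circ\hat F=F\circ\pi_{\mathbb{N}}$.

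\emph{Uniqueness.} Let $G$ be another continuous lift with $G(0)=0$. Then for each $j$ the component $q_j\circ G$ is also a lift of $f_j$ vanishing at $0$. Uniqueness of lifts on the connected space $\mathbb{R}^{\mathbb{N}}$ then gives $q_j\circ G=\hat f_j$. A direct argument for this avoids the covering space machinery: the difference $q_j\circ G-\hat f_j$ is continuous, takes values in $2\pi\mathbb{Z}$, and vanishes at $0$, so it is identically zero because $\mathbb{R}^{\mathbb{N}}$ is connected.

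\emph{Main obstacle.} The one delicate point is justifying the lifting criterion on $\mathbb{R}^{\mathbb{N}}$, because this space is not locally compact. I do not expect the lack of local compactness to cause trouble, since the lifting theorem only requires the domain to be path connected and locally path connected. For $\mathbb{R}^{\mathbb{N}}$ these properties follow from the fact that the basic open cylinders are products of intervals, hence convex. Should a more hands-on argument be preferable, I would define the lift by path lifting along the segment, namely $\hat f_j(\Theta):=$ the endpoint of the lift of $s\mapsto f_j(s\Theta)$ starting at $0$. Local continuity then comes from lifting the straight-line homotopy on a convex cylinder $U$ around $\Theta$ on which $f_j$ takes values in an evenly covered arc.
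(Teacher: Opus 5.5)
Your proof is correct and is essentially the paper's argument: both apply the covering-space lifting theorem on the contractible space $\mathbb{R}^{\mathbb{N}}$, assemble the lift coordinatewise (continuity being automatic in the product topology), and prove uniqueness coordinatewise by connectedness. The only difference is cosmetic. The paper first lifts $q^m\circ F\circ\pi_{\mathbb{N}}$ through $\pi_m:\mathbb{R}^m\to\mathbb{T}^m$ and then checks that the components agree as $m$ varies, whereas you lift each coordinate directly through $\pi:\mathbb{R}\to\mathbb{T}$, which makes that consistency check unnecessary; you also state explicitly the local path connectedness of $\mathbb{R}^{\mathbb{N}}$, which the paper's appeal to the lifting theorem leaves implicit.
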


\begin{proof}
	For every $m \in \mathbb{N}$, consider the map
	$$ f^m := q^m \circ F \circ \pi_{\mathbb{N}} : \mathbb{R}^{\mathbb{N}} \to \mathbb{T}^m. $$
	Since $\mathbb{R}^{\mathbb{N}}$ is connected and simply connected, and $\pi_m : \mathbb{R}^m \to \mathbb{T}^m$ is the universal covering of $\mathbb{T}^m$ with $\pi_m(0) = 0$, there exists a unique continuous lift $\hat{f}^m : \mathbb{R}^{\mathbb{N}} \to \mathbb{R}^m$ such that $\pi_m \circ \hat{f}^m = f^m$ and $\hat{f}^m(0) = 0$.
	
	We then define $\hat{F}^m : \mathbb{R}^{\mathbb{N}} \to \mathbb{R}^{\mathbb{N}}$ via the inclusion $i^m$:
	$$ \hat{F}^m := i^m \circ \hat{f}^m. $$
	By construction, each $\hat{F}^m$ is continuous and satisfies $\hat{F}^m(0) = 0$. We now show that for any $j \leq m$, the $j$-th component $\hat{F}^m_j$ is independent of $m$.
	
	Observe that for $j \leq m$, the component $f_j^m = (F \circ \pi_{\mathbb{N}})_j$ does not depend on $m$. Thus, both $\hat{F}_j^m$ and $\hat{F}_j^{m+1}$ are lifts of the same map $f_j^m : \mathbb{R}^{\mathbb{N}} \to \mathbb{T}$. Specifically:
	$$ \hat{F}_j^m(\Theta) + 2\pi\mathbb{Z} = f_j^m(\Theta) = f_j^{m+1}(\Theta) = \hat{F}_j^{m+1}(\Theta) + 2\pi\mathbb{Z}. $$
	Since $\mathbb{R}^\mathbb{N}$ is connected and $\hat{F}_j^m(0) = \hat{F}_j^{m+1}(0) = 0$, the uniqueness of lifts implies that:
	$$ \hat{F}_j^m = \hat{F}_j^{m+1} \quad \text{for all } m \geq j. $$
	
	This consistency allows us to well-define the map $\hat{F} : \mathbb{R}^{\mathbb{N}} \to \mathbb{R}^{\mathbb{N}}$ by setting, for each $j \in \mathbb{N}$:
	$$ \hat{F}_j := \hat{F}_j^m \quad \text{for any } m \geq j. $$
	By definition, $\hat{F}(0) = 0$ and $\pi_{\mathbb{N}} \circ \hat{F} = F \circ \pi_{\mathbb{N}}$ holds component-wise. Finally, $\hat{F}$ is continuous because each of its components $\hat{F}_j$ is continuous, which is the defining property of the product topology.
	
	Regarding uniqueness, any other lift $\tilde{F}$ satisfying the same conditions would necessarily have components $\tilde{F}_j$ which are continuous lifts of $f_j = (F \circ \pi_{\mathbb{N}})_j$ such that $\tilde{F}_j(0)=0$. Since $\pi: \mathbb{R} \to \mathbb{T}$ is a covering map and $\mathbb{R}^\mathbb{N}$ is connected, each component $\tilde{F}_j$ is uniquely determined, whence $\tilde{F} = \hat{F}$.
\end{proof}

Let $\mathrm{Aut}(\mathbb{T}^\mathbb{N})$ be the group of automorphisms of $\mathbb{T}^\mathbb{N}$, namely, the set of bijective maps $\mathscr{A} : \mathbb{T}^\mathbb{N} \to \mathbb{T}^\mathbb{N}$ such that both $\mathscr{A}$ and $\mathscr{A}^{-1}$ are continuous group homomorphisms. To characterize these automorphisms, we first introduce a particular group of infinite invertible matrices.

\begin{defn}[The group $\mathrm{FGL}_{\mathbb{N}}(\mathbb{Z})$]
	We denote by $\mathrm{FGL}_{\mathbb{N}}(\mathbb{Z})$ the set of infinite matrices $A = (a_{ij})_{i,j \in \mathbb{N}}$ with $a_{ij} \in \mathbb{Z}$ satisfying the following conditions:
	\begin{enumerate}
		\item each row of $A$ has finite support, i.e., for every $i \in \mathbb{N}$, the set $\{j \in \mathbb{N} : a_{ij} \neq 0\}$ is finite;
		
		\item the matrix $A$ is invertible, meaning there exists an infinite matrix $A^{-1} = (a'_{ij})_{i,j \in \mathbb{N}}$ with integer entries and finite support rows such that $A A^{-1} = A^{-1} A = I$, where $I$ is the infinite identity matrix.
	\end{enumerate}
\end{defn}

The condition on the finite support of the rows ensures that the action of the matrix on $\mathbb{R}^\mathbb{N}$ is well-defined. Specifically, for any $\Theta = (\Theta_j)_{j \in \mathbb{N}} \in \mathbb{R}^\mathbb{N}$, the $i$-th component of the image $A\Theta$ is given by the sum:
\begin{equation}
(A\Theta)_i = \sum_{j \in \mathbb{N}} a_{ij}\Theta_j.
\end{equation}
Since each row has finite support, this sum is always finite, and thus $A\Theta$ is a well-defined element of $\mathbb{R}^\mathbb{N}$. Furthermore, the resulting linear map $A: \mathbb{R}^\mathbb{N} \to \mathbb{R}^\mathbb{N}$ is continuous with respect to the product topology, as each component $(A\Theta)_i$ depends only on a finite number of coordinates of $\Theta$.

Matrix multiplication for $A, B \in \mathrm{FGL}_{\mathbb{N}}(\mathbb{Z})$ is defined by the standard rule $(AB)_{ij} = \sum_{k \in \mathbb{N}} a_{ik}b_{kj}$. This sum is always well-posed because each row of $A$ contains only finitely many non-zero entries. Moreover, $\mathrm{FGL}_{\mathbb{N}}(\mathbb{Z})$ is closed under multiplication: the $i$-th row of the product $AB$ is a finite linear combination of the rows of $B$, namely 
$$(AB)_{i \cdot} = \sum_{k \in S_i} a_{ik} (B)_{k \cdot},$$ 
where $S_i := \{k \in \mathbb{N} : a_{ik} \neq 0\}$ is the finite support of the $i$-th row of $A$. Since the union of finitely many finite sets is itself finite, the $i$-th row of $AB$ also has finite support. This structure, combined with the existence of the inverse (which by definition belongs to the same set), ensures that $\mathrm{FGL}_{\mathbb{N}}(\mathbb{Z})$ forms a group under matrix multiplication.

\begin{prop}\label{matrix}
	$\mathrm{Aut}(\mathbb{T}^\mathbb{N})$ is isomorphic to $\mathrm{FGL}_{\mathbb{N}}(\mathbb{Z})$ via the correspondence
	\begin{equation}\label{isomatrix}
	\mathrm{FGL}_\mathbb{N}(\mathbb{Z}) \ni A=(a_{ij})_{i,j \in \N} \mapsto \mathscr{A} \in \mathrm{Aut}(\mathbb{T}^\mathbb{N}),
	\end{equation}
	where, for all $\theta = (\Theta_j + 2\pi\mathbb{Z})_{j \in \mathbb{N}} \in \mathbb{T}^\mathbb{N}$, the map $\mathscr{A}$ is defined by
	\begin{equation}\label{isomatrix2}
	\mathscr{A}(\theta) = \bigg( \sum_{j \in \mathbb{N}} a_{ij}\Theta_j + 2\pi\mathbb{Z} \bigg)_{i \in \mathbb{N}}.
	\end{equation}
\end{prop}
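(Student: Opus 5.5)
The plan has three parts: check that the assignment \eqref{isomatrix}--\eqref{isomatrix2} is a well-defined group homomorphism into $\mathrm{Aut}(\mathbb{T}^\mathbb{N})$, show it is injective, and show it is surjective using Theorem \ref{lifts}.

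\emph{Well-definedness and homomorphism.} Fix $A\in\mathrm{FGL}_\mathbb{N}(\mathbb{Z})$. Since each row of $A$ has finite support and integer entries, changing $\Theta$ by an element of $(2\pi\mathbb{Z})^\mathbb{N}$ changes $A\Theta$ by an element of $(2\pi\mathbb{Z})^\mathbb{N}$. Hence $\mathscr{A}$ is well defined on $\mathbb{R}^\mathbb{N}/(2\pi\mathbb{Z})^\mathbb{N}\cong\mathbb{T}^\mathbb{N}$ (Proposition \ref{quoz}, Remark \ref{abuse}), and $\mathscr{A}\circ\pi_\mathbb{N}=\pi_\mathbb{N}\circ A$. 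The map $A:\mathbb{R}^\mathbb{N}\to\mathbb{R}^\mathbb{N}$ is continuous and $\pi_\mathbb{N}$ is a quotient map, so $\mathscr{A}$ is a continuous homomorphism. The identities $\mathscr{A}\mathscr{B}=\mathscr{AB}$ and $\mathscr{I}=\mathrm{id}$ follow from associativity of the (row-finite) matrix products acting on $\mathbb{R}^\mathbb{N}$. Taking $B=A^{-1}$ then shows that $\mathscr{A}$ is bijective with continuous inverse, so $A\mapsto\mathscr{A}$ is a homomorphism into $\mathrm{Aut}(\mathbb{T}^\mathbb{N})$.

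\emph{Injectivity.} Suppose $\mathscr{A}=\mathrm{id}$. Then $A$ and $I$ are both continuous lifts of the identity vanishing at $0$. By the uniqueness part of Theorem \ref{lifts}, $A\Theta=\Theta$ for all $\Theta$, and evaluating on the basis vectors $e_j$ gives $A=I$. Alternatively, apply $\mathscr{A}$ to $\pi_\mathbb{N}(te_j)$ for small $t$ and read off $a_{ij}$.

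\emph{Surjectivity.} This is the main step. Let $\mathscr{F}\in\mathrm{Aut}(\mathbb{T}^\mathbb{N})$ and let $\hat F$ be its lift from Theorem \ref{lifts}. I will show that $\hat F$ is additive, continuous, and maps $(2\pi\mathbb{Z})^\mathbb{N}$ into itself.
\begin{itemize}
\item \emph{Additivity.} For fixed $\Theta'$, the maps $\Theta\mapsto\hat F(\Theta+\Theta')-\hat F(\Theta')$ and $\hat F$ both lift $\mathscr{F}$ and vanish at $0$, so they coincide by uniqueness. Hence $\hat F$ is additive.
\item \emph{Linearity.} Continuity together with additivity gives $\mathbb{R}$-linearity.
\item \emph{Integrality.} Since $\pi_\mathbb{N}\hat F=\mathscr{F}\pi_\mathbb{N}$ and $\mathscr{F}(0)=0$, the map $\hat F$ sends $(2\pi\mathbb{Z})^\mathbb{N}$ into itself.
\end{itemize}
Now set $a_{ij}:=(\hat F(e_j))_i$, which is an integer because $\hat F(2\pi e_j)\in(2\pi\mathbb{Z})^\mathbb{N}$. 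The component $\hat F_i:\mathbb{R}^\mathbb{N}\to\mathbb{R}$ is a continuous linear functional in the product topology. It is therefore bounded on a basic neighbourhood of $0$ that constrains only finitely many coordinates, so it vanishes on the subspace where those coordinates are $0$. Consequently $\hat F_i(\Theta)=\sum_j a_{ij}\Theta_j$ is a finite sum, the row $i$ has finite support, and $\mathscr{F}$ is given by \eqref{isomatrix2} with $A=(a_{ij})$.

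\emph{Inverse matrix.} Applying the same argument to $\mathscr{F}^{-1}$ gives a row-finite integer matrix $B$. Then $AB$ and $BA$ are lifts of the identity vanishing at $0$, so both equal $I$ by the injectivity step, and $A\in\mathrm{FGL}_\mathbb{N}(\mathbb{Z})$.

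The main obstacle is the continuity argument forcing finite row support; everything else is formal use of the uniqueness of lifts.
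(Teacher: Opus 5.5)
Your proposal is correct and follows the same route as the paper. Both lift the automorphism via Theorem \ref{lifts}, deduce linearity and integrality of the lift, use continuity in the product topology to force finite row support, and recover the inverse matrix from uniqueness of lifts applied to $\mathscr{A}^{-1}$. The differences are only in detail. You prove additivity of the lift via uniqueness, where the paper merely asserts it. You get finite support from boundedness of $\hat F_i$ on a basic neighbourhood of $0$, rather than the paper's truncation-plus-divergent-series argument with $\Theta_{j_k}=k/a_{ij_k}$. You also explicitly check that $A\mapsto\mathscr{A}$ is an injective homomorphism, which the paper treats as immediate.
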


\begin{proof}
It is immediate to verify that any $A \in \mathrm{FGL}_{\mathbb{N}}(\mathbb{Z})$ defines, through \eqref{isomatrix} and \eqref{isomatrix2}, an automorphism of $\mathbb{T}^\mathbb{N}$.

Let us prove the converse. Given $\mathscr{A} \in \mathrm{Aut}(\mathbb{T}^\mathbb{N})$, we first recall the identification $\mathbb{T}^\mathbb{N} \cong \mathbb{R}^\mathbb{N}/(2\pi\mathbb{Z})^\mathbb{N}$ established in Proposition \ref{quoz}. By Theorem \ref{lifts}, there exists a unique continuous lift $\hat{\mathscr{A}}: \mathbb{R}^\mathbb{N} \to \mathbb{R}^\mathbb{N}$ of $\mathscr{A}$ such that $\hat{\mathscr{A}}(0)=0$ and $\pi_\mathbb{N} \circ \hat{\mathscr{A}} = \mathscr{A} \circ \pi_\mathbb{N}$. Since $\mathbb{R}^\mathbb{N}$ is connected and $\mathscr{A}$ is an endomorphism, $\hat{\mathscr{A}}$ is an endomorphism as well; its continuity then ensures that it is an $\mathbb{R}$-linear map.

Let $e_j$ be the $j$-th canonical vector in $\mathbb{R}^\mathbb{N}$. For any $\Theta \in \mathbb{R}^\mathbb{N}$, let $(\Theta^n)_{n \in \mathbb{N}}$, where $\Theta^n := \sum_{j=1}^n \Theta_j e_j$, be the sequence of its truncations. By the definition of the product topology, $\Theta^n \to \Theta$ as $n \to \infty$. Since $\hat{\mathscr{A}}$ is continuous, it preserves the limits of convergent sequences; thus, for each component $i \in \mathbb{N}$, we have
\begin{equation}\label{limit}
(\hat{\mathscr{A}}(\Theta))_i = \left(\hat{\mathscr{A}}\left(\lim_{n \to \infty} \Theta^n\right)\right)_i = \lim_{n \to \infty} (\hat{\mathscr{A}}(\Theta^n))_i.
\end{equation}
By the linearity of $\hat{\mathscr{A}}$ and the fact that $\Theta^n$ has finite support, it follows that, for any $n \in \mathbb{N}$, $(\hat{\mathscr{A}}(\Theta^n))_i = \sum_{j=1}^n a_{ij}\Theta_j$, where $a_{ij} := (\hat{\mathscr{A}}(e_j))_i$. Substituting this into \eqref{limit}, we obtain $(\hat{\mathscr{A}}(\Theta))_i = \sum_{j \in \mathbb{N}} a_{ij}\Theta_j.$

The convergence of this series for every $\Theta \in \mathbb{R}^\mathbb{N}$ implies that each row of $A$ must have finite support. Indeed, if for some $i$ there were infinitely many non-zero entries $(a_{ij_k})_{k \in \mathbb{N}}$, one could define $\Theta \in \mathbb{R}^\mathbb{N}$ by setting $\Theta_{j_k} = k/a_{ij_k}$ (and $\Theta_j = 0$ otherwise), making the $i$-th component of $\hat{\mathscr{A}}(\Theta)$ diverge, which is a contradiction.

Furthermore, the relation $\pi_\mathbb{N} \circ \hat{\mathscr{A}} = \mathscr{A} \circ \pi_\mathbb{N}$ requires that $\hat{\mathscr{A}}$ maps the kernel of the projection $\pi_\mathbb{N}$, namely $(2\pi\mathbb{Z})^\mathbb{N}$, into itself; hence $a_{ij} \in \mathbb{Z}$ for all $i,j \in \mathbb{N}$. Finally, since $\mathscr{A}$ is an automorphism, there exists a continuous inverse $\mathscr{A}^{-1}$ which, by the same arguments, possesses a unique continuous linear lift $(\widehat{\mathscr{A}^{-1}})$ represented by an integer matrix $A'$ with finite support rows. By the uniqueness of lifts, the relations $\mathscr{A} \circ \mathscr{A}^{-1} = \mathrm{Id}_{\mathbb{T}^\mathbb{N}}$ and $\mathscr{A}^{-1} \circ \mathscr{A} = \mathrm{Id}_{\mathbb{T}^\mathbb{N}}$ imply that $\hat{\mathscr{A}} \circ (\widehat{\mathscr{A}^{-1}}) = \mathrm{Id}_{\mathbb{R}^\mathbb{N}}$ and $(\widehat{\mathscr{A}^{-1}}) \circ \hat{\mathscr{A}} = \mathrm{Id}_{\mathbb{R}^\mathbb{N}}$, respectively. In terms of matrices, this yields $AA' = A'A = I$, confirming that $A \in \mathrm{FGL}_{\mathbb{N}}(\mathbb{Z})$.
\end{proof}

\paraga \textit{Reduction.} Let $\mathrm{Aut}(\Z^{(\N)})$ be the group of bijective maps $\mathscr{B}$ from $\Z^{(\N)}$ to itself such that both $\mathscr{B}$ and $\mathscr{B}^{-1}$ are homomorphisms of $\Z$-modules.

\begin{ex}[Permutations in $\mathrm{Aut}(\mathbb{Z}^{(\mathbb{N})})$]
	Let $\sigma: \mathbb{N} \to \mathbb{N}$ be a bijection. We define the operator $P_{\sigma}$ by its action on the basis: $P_{\sigma}(e_j) = e_{\sigma(j)}$, extending it by linearity to any $\nu = \sum \nu_j e_j \in \mathbb{Z}^{(\mathbb{N})}$.
	
	Since any $\nu \in \mathbb{Z}^{(\mathbb{N})}$ has finite support, its image $P_\sigma(\nu)$ also has finite support, making $P_\sigma$ a well-defined $\mathbb{Z}$-module homomorphism from $\mathbb{Z}^{(\mathbb{N})}$ into itself. The associated matrix $(b_{ij})_{i,j \in \N}$ is:
	\begin{equation*}
	b_{ij} = 
	\begin{cases} 
	1 & \text{if } i = \sigma(j), \\
	0 & \text{otherwise.}
	\end{cases}
	\end{equation*}
	Since $\sigma$ is a bijection, each row $i$ has exactly one non-zero entry at column $\sigma^{-1}(i)$. This ensures that the inverse operator $P_{\sigma^{-1}}$ is also a well-defined homomorphism on $\mathbb{Z}^{(\mathbb{N})}$, thus $P_{\sigma} \in \mathrm{Aut}(\mathbb{Z}^{(\mathbb{N})})$.
\end{ex}

\begin{prop}\label{autmodule}
	An operator $\mathscr{B}$ belongs to $\mathrm{Aut}(\mathbb{Z}^{(\mathbb{N})})$ if and only if its representing matrix $B = (b_{ij})_{i,j \in \mathbb{N}}$ with respect to the canonical basis $\{e_j\}_{j \in \mathbb{N}}$ satisfies $B^* \in \mathrm{FGL}_{\mathbb{N}}(\mathbb{Z})$, where $B^*$ denotes the transpose matrix of $B$.
\end{prop}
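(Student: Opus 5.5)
The plan is to translate the defining conditions of $\mathrm{FGL}_{\mathbb{N}}(\mathbb{Z})$ for $B^*$ into conditions on the columns of $B$, and to check that these are exactly what is needed for a $\mathbb{Z}$-linear bijection of $\mathbb{Z}^{(\mathbb{N})}$.

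First I would fix the correspondence between homomorphisms and matrices. A $\mathbb{Z}$-module homomorphism $\mathscr{B}:\mathbb{Z}^{(\mathbb{N})}\to\mathbb{Z}^{(\mathbb{N})}$ is determined by the images $\mathscr{B}(e_j)=\sum_i b_{ij}e_i$, because $\{e_j\}$ is a basis. Each $\mathscr{B}(e_j)$ lies in $\mathbb{Z}^{(\mathbb{N})}$, so each column of $B$ has finite support and integer entries. Equivalently, each row of $B^*$ has finite support. Conversely, any integer matrix whose columns have finite support defines a homomorphism $\nu\mapsto B\nu$. Here $(B\nu)_i=\sum_j b_{ij}\nu_j$ is a finite sum, since $\nu$ has finite support. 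The result has finite support because it is a finite combination of finitely supported columns. Under this dictionary, composition of homomorphisms corresponds to matrix multiplication, $\mathscr{B}\mathscr{C}\leftrightarrow BC$. This product is well defined because it only involves finite combinations of columns. The identity corresponds to $I$.

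For the direct implication, let $\mathscr{B}\in\mathrm{Aut}(\mathbb{Z}^{(\mathbb{N})})$ with inverse $\mathscr{B}^{-1}$, represented by a matrix $B'$. By the previous step, the columns of both $B$ and $B'$ have finite support. The relations $\mathscr{B}\mathscr{B}^{-1}=\mathscr{B}^{-1}\mathscr{B}=\mathrm{Id}$ give $BB'=B'B=I$. Taking transposes gives $(B')^*B^*=B^*(B')^*=I$. Transposing a product is legitimate here because each entry $(BB')_{ij}=\sum_k b_{ik}b'_{kj}$ is a finite sum, by the finite support of the columns of $B'$. Hence $B^*$ has integer entries, rows of finite support, and a two-sided inverse $(B')^*$ with the same properties, so $B^*\in\mathrm{FGL}_{\mathbb{N}}(\mathbb{Z})$.

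For the converse, suppose $B^*\in\mathrm{FGL}_{\mathbb{N}}(\mathbb{Z})$ with inverse $C$. Then $B$ and $C^*$ have finitely supported columns, so both define homomorphisms of $\mathbb{Z}^{(\mathbb{N})}$. Transposing $B^*C=CB^*=I$ gives $C^*B=BC^*=I$. By the correspondence with composition, the homomorphism defined by $C^*$ is a two-sided inverse of $\mathscr{B}$, so $\mathscr{B}\in\mathrm{Aut}(\mathbb{Z}^{(\mathbb{N})})$.

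The only delicate step is justifying the manipulation of infinite matrices, namely that $(XY)^*=Y^*X^*$ and that associativity hold. I would handle this by checking that every sum involved is finite, which is guaranteed in each case by the finite-support conditions used above.
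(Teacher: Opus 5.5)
Your proposal is correct and follows essentially the same route as the paper. You identify homomorphisms of $\mathbb{Z}^{(\mathbb{N})}$ with integer matrices with finitely supported columns, transpose $BB'=B'B=I$ (justified because every entry is a finite sum), and in the converse transpose the $\mathrm{FGL}_{\mathbb{N}}(\mathbb{Z})$ inverse relations to obtain a column-finite two-sided inverse; your explicit composition-to-product dictionary only makes precise what the paper uses implicitly.
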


\begin{proof}
$(\implies)$ Let $\mathscr{B} \in \mathrm{Aut}(\mathbb{Z}^{(\mathbb{N})})$. As a $\mathbb{Z}$-module homomorphism, $\mathscr{B}$ is uniquely determined by its action on the canonical basis $\{e_j\}_{j \in \mathbb{N}}$. Since $\mathscr{B}(e_j) \in \mathbb{Z}^{(\mathbb{N})}$ for all $j \in \mathbb{N}$, we can write its image as a finite linear combination of the basis vectors:
\begin{equation}\label{basis_action}
\mathscr{B}(e_j) = \sum_{i \in \mathbb{N}} b_{ij} e_i,
\end{equation}
where $b_{ij} \in \mathbb{Z}$ are zero for all but finitely many $i$. This defines the representing matrix $B = (b_{ij})_{i,j \in \mathbb{N}}$, which has integer entries and columns with finite support. Furthermore, as $\mathscr{B}$ is an automorphism, there exists an inverse homomorphism $\mathscr{B}^{-1}$ on $\mathbb{Z}^{(\mathbb{N})}$ whose matrix $B^{-1}=(b'_{jk})_{j,k \in \mathbb{N}}$ must also have integer entries and columns with finite support. 
The infinite matrix products $B B^{-1}$ and $B^{-1} B$ are well-defined because each entry is a sum with only finitely many non-zero terms: specifically, $(B B^{-1})_{ik} = \sum_{j \in \mathbb{N}} b_{ij} b'_{jk}$ involves only finitely many terms due to the finite support of the columns of $B^{-1}$, while $(B^{-1} B)_{ik} = \sum_{j \in \mathbb{N}} b'_{ij} b_{jk}$ is finite due to the finite support of the columns of $B$. This property ensures that the standard algebraic identity for the transpose of a product $(B B^{-1})^* = (B^{-1})^* B^*$ holds. Thus, transposing $B B^{-1} = B^{-1} B = I$ yields $(B^{-1})^* B^* = B^* (B^{-1})^* = I$, showing that $B^*$ is invertible in the sense of $\mathrm{FGL}_{\mathbb{N}}(\mathbb{Z})$ with $(B^*)^{-1} = (B^{-1})^*$. Since the rows of $B^*$ and $(B^*)^{-1}$ are the columns of $B$ and $B^{-1}$ respectively, we have $B^* \in \mathrm{FGL}_{\mathbb{N}}(\mathbb{Z})$.
	
	$(\impliedby)$ Conversely, suppose $B^* \in \mathrm{FGL}_{\mathbb{N}}(\mathbb{Z})$. By definition, both $B^*$ and its inverse $(B^*)^{-1}$ have integer entries and rows with finite support. This ensures that $B=(b_{ij})_{i,j \in \N}$ and the matrix $B' := ((B^*)^{-1})^*$ have integer entries and columns with finite support.
	The finite support of the columns of $B$, together with the fact that $b_{ij} \in \Z$ for all $i,j \in \N$, ensures that the linear extension $\mathscr{B}$ of the map $e_j \mapsto \sum_i b_{ij}e_i$ is a well-defined homomorphism from $\mathbb{Z}^{(\mathbb{N})}$ into itself; similarly, the operator $\mathscr{B}'$ associated to $B'$ is a well-defined homomorphism on $\mathbb{Z}^{(\mathbb{N})}$. Again, the finite support of the columns of $B$ and $B'$ justifies the identity $(B' B)^* = B^* (B')^* = B^* (B^*)^{-1} = I$, which implies $B' B = I$. A symmetric argument shows $B B' = I$, proving that $\mathscr{B}'$ is the inverse of $\mathscr{B}$ and thus $\mathscr{B} \in \mathrm{Aut}(\mathbb{Z}^{(\mathbb{N})})$.
\end{proof}

\begin{lemma}\label{abstractreduction}
	Let $\nu \in \mathbb{Z}^{(\mathbb{N})}\setminus \{0\}$. Then, there exists $\mathscr{B} \in \mathrm{Aut}(\mathbb{Z}^{(\mathbb{N})})$ such that $\tilde{\nu}:=\mathscr{B}\nu$ satisfies $\tilde{\nu}_1\neq 0$ and $\tilde{\nu}_j =0$ for all $j \geq 2$.
\end{lemma}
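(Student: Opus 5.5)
Since $\nu$ has finite support, choose $m\in\N$ with $\nu_j=0$ for all $j>m$. The plan is to reduce to the finite-dimensional statement on $\Z^m$ and then extend by the identity on the remaining coordinates. Concretely, write $\nu=(\nu',0,0,\dots)$ with $\nu'=(\nu_1,\dots,\nu_m)\in\Z^m\setminus\{0\}$. Let $g:=\gcd(\nu_1,\dots,\nu_m)\geq 1$. The vector $\nu'/g$ is primitive in $\Z^m$, i.e.\ its entries are coprime.

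The key step is to produce a matrix $M\in \mathrm{GL}_m(\Z)$ with $M\nu'=g\,e_1$. I would do this by the Euclidean algorithm realized through elementary integer row operations. These operations are adding an integer multiple of one coordinate to another, swapping two coordinates, and changing a sign, and each is an invertible integer matrix with integer inverse. Repeatedly reducing the coordinate of largest absolute value modulo a nonzero coordinate of smallest absolute value strictly decreases $\sum_j|\nu'_j|$ until only one nonzero entry remains, equal to $\pm g$. A swap then brings it to position $1$, and a sign change makes it $g$. The product of these elementary matrices is the desired $M$. Alternatively, one can complete the primitive vector $\nu'/g$ to a basis of $\Z^m$ (Bézout) and take $M$ to be the inverse of the matrix having this basis as columns. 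This finite reduction is the only substantive step, and it is entirely classical; I expect no real obstacle there.

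Finally, define $B$ as the block matrix $\mathrm{diag}(M,I)$, where $I$ is the infinite identity, and let $\mathscr{B}$ be the operator it represents on $\Z^{(\N)}$ via $e_j\mapsto\sum_i b_{ij}e_i$. Its columns have finite support and integer entries. The analogous block matrix $\mathrm{diag}(M^{-1},I)$ is a two-sided inverse, so $B^*$ and its inverse $\mathrm{diag}((M^{-1})^*,I)$ lie in $\mathrm{FGL}_\N(\Z)$. By Proposition \ref{autmodule}, $\mathscr{B}\in\mathrm{Aut}(\Z^{(\N)})$. One could also verify this directly, since $\mathscr{B}$ acts as $M$ on $\mathrm{span}\{e_1,\dots,e_m\}$ and as the identity elsewhere. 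Then $\tilde\nu=\mathscr{B}\nu=(M\nu',0,0,\dots)=g\,e_1$, so $\tilde\nu_1=g\neq0$ and $\tilde\nu_j=0$ for $j\ge2$. The only point needing a little care is checking that the block extension respects the finite-support conventions of Proposition \ref{autmodule}, which is immediate.
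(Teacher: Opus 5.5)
Your proposal is correct and follows essentially the same route as the paper: a Euclidean-algorithm reduction of the finitely many nonzero entries of $\nu$ by elementary integer operations (permutations, sign changes, and shears that subtract one coordinate from others). Each of these is an automorphism of $\Z^{(\N)}$ because it differs from the identity only on a finite block. Packaging the reduction as $\mathrm{diag}(M,I)$ with $M\in\mathrm{GL}_m(\Z)$, and using division with remainder instead of the paper's repeated subtraction of the smallest entry, gives a slightly tidier version of the same argument, and it also identifies $\tilde\nu=g\,e_1$ with $g=\gcd(\nu_1,\dots,\nu_m)$.
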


\begin{proof}
	If the support of $\nu$ consists of a single element, say $j^*$, then it is sufficient to consider the permutation operator $P_\sigma$ (as in Example 1.1) that swaps the first and the $j^{*}$-th component. By Proposition \ref{autmodule}, $P_\sigma \in \mathrm{Aut}(\mathbb{Z}^{(\mathbb{N})})$. 
	
	Consider now the case where the cardinality of the support of $\nu$ is $N>1$. Let $B_1$ be the matrix that assigns to the given $\nu$ the list $(|\nu_{j_1}|, \dots, |\nu_{j_N}|, 0, \dots)$, where indices are chosen such that $1 \leq |\nu_{j_1}| \leq \cdots \leq |\nu_{j_{N}}|$. This matrix represents an automorphism of $\mathbb{Z}^{(\mathbb{N})}$ as it is a finite composition of permutations $P_\sigma$ and a diagonal matrix with $\pm 1$ on the diagonal. Define $\nu^1 \in \mathbb{Z}^{(\mathbb{N})}$ by setting $\nu^1_k = |\nu_{j_k}|$ for $k=1, \dots, N$ and $\nu^1_k = 0$ for $k > N$, and let 
	\begin{equation}
	S^1 := \sum_{j=1}^{N} \nu_j^1.
	\end{equation}
	Consider the transformation $\nu^1 \mapsto \nu^2$ defined by $\nu^2_1 = \nu^1_1$, $\nu^2_j = \nu^1_j - \nu^1_1$ for $2 \leq j \leq N$, and $\nu^2_j = 0$ for $j > N$. This transformation is induced by a matrix $B_2$ representing an automorphism of $\mathbb{Z}^{(\mathbb{N})}$:
	\begin{equation}
	B_2=\begin{pmatrix}
	\ \ 1 & 0 & \cdots & \cdots & \cdots \ \  \\
	-1 & 1 & 0 & \cdots & \cdots \ \  & \\
	\ \ \vdots & 0 & \ddots \ \  \\
	-1 & \vdots & & \ddots&    \\
	\ \ 0 & \vdots & & & \ddots  \\
	\ \ \vdots & \vdots
	\end{pmatrix}
	\ \ \ 
	\ \ \ B^{-1}_2=\begin{pmatrix}
	\ \  1 & 0 & \cdots & \cdots & \cdots \ \ \  \\
	\ \ 1 & 1 & 0 & \cdots & \cdots \ \ \  \\
	\ \ \ \vdots & 0 & \ddots \ \ \  \\
	\ \ 1 & \vdots &  & \ddots \ \ \     \\
	\ \ 0 & \vdots & & & \ddots \ \ \  \\
	\ \ \vdots & \vdots
	\end{pmatrix}
	\end{equation}
	
	Note that $\nu^2_1 = \nu^1_1 \geq 1$ and, for $j=2, \dots, N$, we have $0 \leq \nu^2_j < \nu^1_j$. Thus, if we define $S^2 := \sum_{j=1}^{N} \nu_j^2$, we obtain $1 \leq S^2 < S^1$. 
	
	At this stage, two cases may occur:
	\begin{enumerate}
		\item If $\nu^2$ has only one non-zero component, then by construction this component is $\nu^2_1$ and the process terminates.
		\item If $\nu^2$ still has more than one non-zero component, we denote by $K \geq 0$ the number of components that have become zero (i.e., $\nu^2_j = 0$ for some $j \in \{2, \dots, N\}$). We then apply a permutation $P_{\sigma}$ to move the $N-K$ remaining non-zero components to the first $N-K$ positions. 
	\end{enumerate}
	
	After this rearrangement, we reorder the remaining non-zero components in non-decreasing order and repeat the entire procedure. Let $S^\alpha$ denote the sum of the components of the resulting vector at the end of the $\alpha$-th step. By iterating the process, we generate a strictly decreasing sequence of positive integers:
	\begin{equation}
	S^1 > S^2 > S^3 > \dots > S^\alpha > \dots \geq 1.
	\end{equation}
	This procedure must terminate in a finite number of steps $\bar{\alpha}$ because a strictly decreasing sequence of positive integers cannot be infinite. Specifically, the process stops when the sum $S^{\bar{\alpha}}$ consists of only one non-zero term.
	
	By construction, this remaining term is moved to the first position. Since the overall transformation $\mathscr{B}$ is a composition of a finite number of automorphisms, it follows from Proposition \ref{autmodule} and the group property of $\mathrm{Aut}(\mathbb{Z}^{(\mathbb{N})})$ that $\mathscr{B} \in \mathrm{Aut}(\mathbb{Z}^{(\mathbb{N})})$, which completes the proof.
\end{proof}

\begin{lemma}\label{reduction}
	Given $\om \in \R^\N$, if there exists $ \nu \in \Z^{(\N)}$ such that $\sum_{j\in\N}\om_j\nu_j=0$, then, there is an $\tilde{\om}\in\R^\N$, with $\tilde{\om}_1=0$,  such that the Kronecker flow $(\T^\N,\Phi_\om)$ is topologically conjugate to $(\T^\N,\Phi_{\tilde{\om}})$.
\end{lemma}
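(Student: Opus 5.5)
The plan is to pass the automorphism $\mathscr{B}\in\mathrm{Aut}(\Z^{(\N)})$ from Lemma \ref{abstractreduction} to the torus through the transpose/inverse correspondence of Proposition \ref{autmodule}, and to use the resulting element of $\mathrm{FGL}_\N(\Z)$ as a linear change of angles via Proposition \ref{matrix}. We may assume $\nu\neq 0$; otherwise the hypothesis says nothing and, in any case, the resonance is trivial (if $\nu=0$ were allowed the statement would need $\om$ itself to have a resonance, and we take $\nu\neq0$ as intended).

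First, apply Lemma \ref{abstractreduction} to $\nu$. This gives $\mathscr{B}$ with matrix $B$ such that $\tilde\nu:=B\nu=\tilde\nu_1 e_1$ and $\tilde\nu_1\neq0$. By Proposition \ref{autmodule}, $B^*\in\mathrm{FGL}_\N(\Z)$. Set
\[
A:=(B^{-1})^*=(B^*)^{-1}\in\mathrm{FGL}_\N(\Z).
\]
Let $\mathscr{A}\in\mathrm{Aut}(\T^\N)$ be the automorphism associated with $A$ by Proposition \ref{matrix}, and define $\tilde\om:=A\om\in\R^\N$. This is well defined because the rows of $A$ have finite support.

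Second, verify the conjugacy. Since $A$ acts linearly on lifts,
\[
\mathscr{A}\bigl(\Phi^t_\om(\theta)\bigr)=\Bigl(\textstyle\sum_j a_{ij}(\Theta_j+\om_j t)+2\pi\Z\Bigr)_i=\Phi^t_{\tilde\om}\bigl(\mathscr{A}(\theta)\bigr).
\]
So $\mathscr{A}\circ\Phi^t_\om=\Phi^t_{\tilde\om}\circ\mathscr{A}$, and $\mathscr{A}$ is a homeomorphism. This is the conjugacy required by Definition \ref{topoconj}.

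Third, show $\tilde\om_1=0$. This is the step needing care with infinite matrices. We want the identity
\[
\sum_i \tilde\om_i\,\tilde\nu_i=\sum_j\om_j\nu_j .
\]
Write $\nu=B^{-1}\tilde\nu$. Then
\[
\sum_j\om_j\nu_j=\sum_j\om_j\sum_i b'_{ji}\tilde\nu_i=\sum_i\tilde\nu_i\sum_j (B^{-1})^*_{ij}\om_j=\sum_i\tilde\nu_i(A\om)_i .
\]
The interchange of sums is legitimate because $\tilde\nu$ has finite support and each row of $A=(B^{-1})^*$, i.e. each column of $B^{-1}$, has finite support. Hence every sum involved is finite. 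Since $\tilde\nu=\tilde\nu_1e_1$, we get $\tilde\nu_1\tilde\om_1=\om\cdot\nu=0$, and because $\tilde\nu_1\neq0$ this yields $\tilde\om_1=0$.

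The main obstacle is only bookkeeping: choosing the correct matrix, namely the inverse transpose rather than $B^*$ itself, so that the pairing $\om\cdot\nu$ is preserved, and checking the finiteness conditions behind the sum interchange.
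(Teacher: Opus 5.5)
Your proof is correct and follows the paper's argument: Lemma~\ref{abstractreduction} gives $\mathscr{B}$, Proposition~\ref{autmodule} places $B^*$ in $\mathrm{FGL}_\N(\Z)$, and Proposition~\ref{matrix} turns the corresponding matrix into a torus automorphism that conjugates the two flows. Your explicit choice $\tilde\om=(B^*)^{-1}\om$ and the pairing identity $\tilde\nu_1\tilde\om_1=\om\cdot\nu=0$ make precise a step the paper only asserts. The paper's displayed formula ``$\tilde\om:=B^*\tilde\om$'' is garbled; the consistent reading is $\om=B^*\tilde\om$, which is exactly your $\tilde\om$, whereas $B^*\om$ would in general not have vanishing first component.
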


\begin{proof}
	Let $\nu \in \Z^{(\N)}\setminus \{0\}$ be such that $\sum_{j \in \N}\om_j\nu_j=0$. By Lemma \ref{abstractreduction}, there exists $\mathscr{B}\in\mathrm{Aut}(\Z^{(\N)})$ such that $\tilde{\nu}:=\mathscr{B}\nu$ satisfies $\tilde{\nu}_1 \neq 0$ and $\tilde{\nu}_j=0$ for all $j>1$. Remark \ref{autmodule} tells us that such a $\mathscr{B}$ is represented by a matrix $B$ such that $B^* \in \mathrm{FGL}_{\N}(\Z)$. By Proposition \ref{matrix}, there exists an automorphism $\mathscr{A}$ of $\T^\N$ that is represented by $B^*$. But then, the Kronecker flows $(\T^\N,\Phi_{\tilde{\om}})$, with $\tilde{\om}:=B^*\tilde{\om}$, and $(\T^\N,\Phi_\om)$ are topologically conjugate, and $\tilde{\om}_1=0$.
\end{proof}

\begin{prop}\label{propprel}
	Let $\om \in \mathbb{R}^\N$ be such that $1\leq d:=\mathrm{rank}(\mathscr{R}_\om)<\infty$. Then, $(\T^\N,\Phi_\om)$ is topologically conjugate to another Kronecker flow $(\T^\N,\Phi_{\tilde{\om}})$ with $\tilde{\om}=(0_d,\bar{\om})$, where $\bar{\om}$ is non-resonant and $0_d$ is the null vector in $\R^d$.
	\end{prop}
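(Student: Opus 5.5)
Let $\om\in\R^\N$ with $d=\mathrm{rank}(\mathscr{R}_\om)$, $1\le d<\infty$. The plan is to find a single $\mathscr{B}\in\mathrm{Aut}(\Z^{(\N)})$ that maps $\mathscr{R}_\om$ onto $\Z^d\times\{0\}=\bigoplus_{j=1}^d\Z e_j$. Then Proposition \ref{autmodule} and Proposition \ref{matrix} produce the conjugating automorphism of $\T^\N$, exactly as in the proof of Lemma \ref{reduction}. Throughout, the relevant duality is this: if $\mathscr{A}$ is the automorphism of $\T^\N$ with matrix $A\in\mathrm{FGL}_\N(\Z)$, then $\mathscr{A}\circ\Phi^t_\om=\Phi^t_{A\om}\circ\mathscr{A}$. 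Moreover, $\nu\cdot(A\om)=(A^*\nu)\cdot\om$ for $\nu\in\Z^{(\N)}$. Hence $\mathscr{R}_{A\om}=(A^*)^{-1}\mathscr{R}_\om$, where $A^*$ acts on $\Z^{(\N)}$ as an element of $\mathrm{Aut}(\Z^{(\N)})$ by Proposition \ref{autmodule}. It therefore suffices to find $\mathscr{B}\in\mathrm{Aut}(\Z^{(\N)})$ with $\mathscr{B}(\mathscr{R}_\om)=\bigoplus_{j\le d}\Z e_j$.

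\textbf{Step 1 (purity).} Observe that $\mathscr{R}_\om$ is a pure subgroup of $\Z^{(\N)}$: if $k\nu\in\mathscr{R}_\om$ with $k\neq0$, then $\nu\in\mathscr{R}_\om$. Equivalently, $\Z^{(\N)}/\mathscr{R}_\om\cong\mathscr{M}_\om$ is torsion-free (Remark \ref{firstisothm}, Remark \ref{cwt}). Choose a basis $\nu^1,\dots,\nu^d$ of the free group $\mathscr{R}_\om$. All of these vectors are supported in some finite set $\{1,\dots,N\}$. Thus $\mathscr{R}_\om\subseteq\Z^N\times\{0\}$, and it is pure in $\Z^N$.

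\textbf{Step 2 (finite-dimensional normal form).} The quotient $\Z^N/\mathscr{R}_\om$ is finitely generated and torsion-free, hence free of rank $N-d$. So the sequence $0\to\mathscr{R}_\om\to\Z^N\to\Z^{N-d}\to0$ splits, and $\mathscr{R}_\om$ is a direct summand of $\Z^N$. This gives $C\in GL_N(\Z)$ mapping $\mathscr{R}_\om$ onto $\Z^d\times\{0\}^{N-d}$. Alternatively, one can iterate Lemma \ref{abstractreduction} inductively, or use Smith normal form; purity ensures that all the elementary divisors equal $1$. Extend $C$ by the identity on the coordinates $j>N$ to get a block matrix $\mathrm{diag}(C,I)$. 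Its transpose clearly lies in $\mathrm{FGL}_\N(\Z)$, so by Proposition \ref{autmodule} it defines the required $\mathscr{B}$. I expect this step to be the main obstacle. Lemma \ref{abstractreduction} alone handles only one vector, and applying it $d$ times naively can leave a nontrivial index, which would create torsion and thus no zero frequency. Purity is exactly what rules this out.

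\textbf{Step 3 (conclusion).} Set $A:=(\mathscr{B}^{-1})^*$, which lies in $\mathrm{FGL}_\N(\Z)$, and let $\tilde\om:=A\om$. By Proposition \ref{matrix}, $\Phi_\om$ is conjugate to $\Phi_{\tilde\om}$, and $\mathscr{R}_{\tilde\om}=\mathscr{B}\mathscr{R}_\om=\bigoplus_{j\le d}\Z e_j$. Since $e_j\in\mathscr{R}_{\tilde\om}$ for $j\le d$, we get $\tilde\om_j=0$ for $j\le d$. Write $\tilde\om=(0_d,\bar\om)$. If $\bar\om$ admitted a nonzero resonance $\mu$, then $(0_d,\mu)$ would be an element of $\mathscr{R}_{\tilde\om}$ outside $\bigoplus_{j\le d}\Z e_j$, a contradiction. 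Hence $\bar\om$ is non-resonant.
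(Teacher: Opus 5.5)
Your proof is correct, and it takes a different route from the paper's.

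\textbf{The two routes.} The paper argues iteratively. Lemma \ref{reduction} (built on Lemma \ref{abstractreduction}) makes one frequency vanish. One then restricts to the remaining coordinates, repeats, and stops after $d$ steps. You instead build a single automorphism in one stroke. All of $\mathscr{R}_\om$ lies in some $\Z^N$ and is pure there, so it is a direct summand. Hence some $C\in \mathrm{GL}_N(\Z)$, extended by the identity, carries $\mathscr{R}_\om$ exactly onto $\bigoplus_{j\le d}\Z e_j$.

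\textbf{What your version buys.} You get the exact identification $\mathscr{R}_{\tilde\om}=\bigoplus_{j\le d}\Z e_j$. Both $\tilde\om_j=0$ for $j\le d$ and the non-resonance of $\bar\om$ then follow at once. You also make the duality explicit: $A=(\mathscr{B}^{-1})^*$ and $\mathscr{R}_{A\om}=(A^*)^{-1}\mathscr{R}_\om$. This gets the direction right, whereas the paper's Lemma \ref{reduction} uses $B^*$ where $(B^{-1})^*$ is needed.

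\textbf{What the paper's version buys.} It is more elementary and algorithmic, using only the Euclid-type reduction of a single vector. However, its claim that the process ends after exactly $d$ steps is left implicit. It relies on the rank dropping by exactly one at each step. That holds because once $\tilde\om_1=0$ one has $\mathscr{R}_{\tilde\om}=\Z e_1\oplus\mathscr{R}'$, where $\mathscr{R}'$ is the resonance module of $(\tilde\om_j)_{j\ge 2}$, and automorphisms preserve rank.

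\textbf{A remark on your Step 2.} Your worry that a naive iteration could leave a nontrivial index, and so fail to produce zero frequencies, is unfounded.
\begin{itemize}
\item If a step gives $\mathscr{B}\nu=c\,e_1$ with $|c|>1$, the resulting frequency satisfies $c\,\tilde\om_1=0$ in $\R$, which already forces $\tilde\om_1=0$.
\item More generally, $\mathscr{B}\mathscr{R}_\om=\mathscr{R}_{\tilde\om}$ is automatically pure. A pure subgroup of rank $d$ contained in $\Z^d\times\{0\}$ must equal it, so no index can ever appear.
\end{itemize}
Purity is thus used implicitly in the paper's iteration as well. It is a convenient tool for your one-shot construction, not an obstacle that the iterative approach would hit.
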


	\begin{proof}
    By the above lemma, a resonant Kronecker flow $(\T^\N,\Phi_\om)$ is topologically conjugate to another Kronecker flow such that the first component of the frequency vector vanishes. If the restriction to the support of such a new frequency vector is still resonant, we get rid of another frequency following the same procedure as before. If the dimension of the resonance module is finite, this process will end after a number of iteration equal to the dimension of $\mathscr{R}_\om$, because it is sufficient to do it for a basis of $\mathscr{R}_\om$. The resulting frequency vector $\tilde{\om}$ will be such that its non-vanishing frequencies are rationally independent by construction.
	\end{proof}
	
	\section{The Non-Resonant Case}\label{ue}
	\paraga \textit{The Haar Measure.} Let $\mathscr{P}(\T^\N,\mathbb{R})$ be the space of trigonometric polynomials on $\T^\N$, i.e.,
	\begin{equation*}
	\mathscr{P}(\T^\N,\mathbb{R}):=\bigg\{p:\T^\N \to \mathbb{R} \; : \; p(\theta)=\sum_{\nu \in \Z^{(\N)}}a_\nu e^{\im \Theta\cdot\nu}, \; (a_\nu)_{\nu \in \Z^{(\N)}} \in \bigoplus_{\nu\in \Z^{(\N)}}\C, \; a_{\nu}=\overline{a_{-\nu}}\bigg\}.
	\end{equation*}
	Consider the linear functional
	\begin{equation}\label{tilde}
	\tilde{\Lambda}:\mathscr{P}(\T^\N,\mathbb{R})\ni p \mapsto \lim_{N \to \infty}\dfrac{1}{(2\pi)^N}\int_{\T^N}p(\Theta) \, d\Theta_1 \dots d\Theta_{N}.
	\end{equation}
	It is easy to verify that the limit exists for any trigonometric polynomial. In fact, given $\nu \in \Z^{(\N)}$, there exists $M<\infty$ such that $e^{\im\Theta\cdot \nu}=e^{\im\nu_1\Theta_1}e^{\im \nu_2\Theta_1}\cdot \cdot \cdot e^{\im\nu_{M}\Theta_{M}}$. Thus, for $N\geq M$,
	\begin{equation}
	\begin{aligned}
	\lim_{N\to \infty}\dfrac{1}{(2\pi)^N}\int_{\T^N}e^{\im\Theta\cdot \nu} \, d\Theta_1 \dots d\Theta_{N}=\lim_{N \to \infty}\prod_{j=1}^{M}\int_{0}^{2\pi}e^{ \im\nu_j\Theta_j} \, \dfrac{d\Theta_j}{2\pi}\prod_{k=M+1}^{N}\int_0^{2\pi} \, \dfrac{d\Theta_{k}}{2\pi}.
	\end{aligned}
	\end{equation}
	The right-hand side of this equality can only take the values $0$, if there is $\nu_j\neq 0$, or $1$, if $\nu=0$. In particular, denoting by $\mathds{1}$ the trigonometric polynomial identically equal to one, we have
	\begin{equation}\label{prob}
	\tilde{\Lambda}(\mathds{1})=1.
	\end{equation} 
	Furthermore, $\tilde{\Lambda}$ is Lipschitz relative to the uniform norm, i.e.
	\begin{equation}
		\bigg|\lim_{N \to \infty}\dfrac{1}{(2\pi)^N}\int_{\T^N}p(\Theta) \, d\Theta_1 \dots d\Theta_{N}\bigg|\leq \|p\|_{\infty}\cdot 1.
	\end{equation}
 Since $\mathscr{P}(\T^\N,\mathbb{R})$ is a unital subalgebra of $\mathscr{C}^0(\T^\N,\mathbb{R})$ which separates the points of the compact space $\T^\N$, by the Stone-Weierstrass Theorem, each continuous function on $\T^\N$ is a uniform limit of some sequence of trigonometric polynomials. Then, $\tilde{\Lambda}$ extends uniquely to a Lipschitz (and hence, continuous) linear functional
 
\begin{equation}\label{functionalmeasure}
	\Lambda:\mathscr{C}^0(\T^\N,\mathbb{R}) \to \mathbb{R}, \quad \Lambda(f):=\lim_{n \to \infty}\tilde{\Lambda}(p_n),
		\end{equation}
		where $(p_n)_{n \in \N}$ is any sequence in $\mathscr{P}(\T^\N,\mathbb{R})$ converging  uniformly to $f$. Moreover, $\Lambda$ enjoys also the property well explicated in the proposition just below.
	\begin{prop}
		The linear functional $\Lambda$ is positive, meaning that, if $f\in \mathscr{C}^0(\T^\N,\mathbb{R})$ satisfies $\inf_{\theta\in\T^\N}f(\theta)\geq 0$, then $\Lambda(f)\geq 0$.
		\begin{proof}
			Let $(p_n)_{n \in \N}$ be any sequence in $\mathscr{P}(\T^\N,\mathbb{R})$ uniformly converging to $f$. Then,
			\begin{equation}
		    \begin{aligned}
			\Lambda(f)=\lim_{n \to \infty}\tilde{\Lambda}(p_n)&=\lim_{n \to \infty}\lim_{N \to \infty}\dfrac{1}{(2\pi)^N}\int_{\T^N}p_n(\Theta) \, d\Theta_1\dots d\Theta_{N}\\ &\geq \lim_{n\to\infty}\inf_{\theta \in \T^\N}p_n(\theta)=\inf_{\theta \in \T^\N}f(\theta)\geq 0 \ .
			\end{aligned}
			\end{equation}
		\end{proof}
	\end{prop}

	In what follows, we will repeatedly apply the following celebrated theorem.
	\begin{thm}[Riesz Representation Theorem \cite{rudin}]\label{Rieszthm}
		Let $X$ be a locally compact space, and let $\Lambda$ be a positive and continuous linear functional on $\mathscr{C}^0(X,\mathbb{R})$. Then there exists a $\sigma$-algebra $\mathfrak M$ on $X$ which contains all Borel subsets of $X$, and there exists a unique positive measure $\lambda$ on $\mathfrak M$ which represents $\Lambda$ in the sense that
		$$\Lambda f=\int_X f \, d\lambda, \quad \forall f\in \mathscr{C}^0(X,\mathbb{R}),$$ and which has the following additional properties:
		\begin{enumerate}
			\item[(a)] $\lambda(V):=\sup\{\Lambda(f) \; : \; \text{supp}(f)\subset V\}$ for any open set $V\subset X$;
			\item[(b)] $\lambda(K)<\infty \ $ for every compact set $K\subset X$;
			\item[(c)] 	$\lambda$ is inner and outer regular, meaning that for every $E\subset \mathfrak{M}$ we have
			\begin{equation*}
			\lambda(E)=\inf\{\lambda(V) \; : \; E\subset V, \; V \; \text{open}\}=\sup\{\lambda(K) \; : \; K\subset E, \; K \; \text{compact}\}.
			\end{equation*}
		\end{enumerate}
	
	\end{thm}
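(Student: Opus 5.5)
The statement is the classical Riesz--Markov theorem, quoted from \cite{rudin}, and I would follow the standard construction there. Throughout I assume $X$ is Hausdorff as well as locally compact, and I read $\mathscr{C}^0(X,\mathbb{R})$ as the compactly supported continuous functions. For compact $X$, which is the only case used later (namely $X=\T^\N$), every continuous function is compactly supported. Continuity of $\Lambda$ is then automatic from positivity, so the plan only uses positivity.

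\emph{Construction of $\lambda$.} The measure is forced by property (a), so I define it there and extend. For an open set $V$, set $\lambda(V):=\sup\{\Lambda f \, : \, 0\le f\le 1,\ \text{supp}(f)\subset V\}$. For an arbitrary $E\subset X$, set $\lambda(E):=\inf\{\lambda(V) \, : \, E\subset V,\ V \text{ open}\}$. The two definitions agree on open sets because $\lambda$ is monotone there.

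\emph{Tools and key estimates.} The main tool is Urysohn's lemma for locally compact Hausdorff spaces: for $K\subset V$ with $K$ compact and $V$ open, there is $f$ with $K\prec f\prec V$. A companion result gives partitions of unity subordinate to finite open covers of a compact set. With these I establish, in order:
\begin{enumerate}
\item Countable subadditivity of $\lambda$ on open sets, and hence on all sets. A compactly supported $f\prec\bigcup V_i$ has support in finitely many $V_i$, so a partition of unity splits $f$ across them.
\item The formula $\lambda(K)=\inf\{\Lambda f \, : \, K\prec f\}<\infty$ for compact $K$, which is property (b).
\item Additivity on disjoint compact sets, using Urysohn functions that separate them.
\end{enumerate}

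\emph{The $\sigma$-algebra and regularity.} Let $\mathfrak M_F$ be the sets of finite outer measure that are inner regular by compact sets. Let $\mathfrak M$ be the sets $E$ with $E\cap K\in\mathfrak M_F$ for every compact $K$. I then show:
\begin{enumerate}
\item $\lambda$ is countably additive on $\mathfrak M_F$.
\item For $E\in\mathfrak M_F$ and $\varepsilon>0$ there are $K\subset E\subset V$ with $\lambda(V\setminus K)<\varepsilon$.
\item $\mathfrak M_F$ is closed under differences, unions and intersections.
\item $\mathfrak M$ is a $\sigma$-algebra containing the closed sets, hence all Borel sets, and $\lambda$ is a measure on it.
\end{enumerate}
This yields outer regularity by construction. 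It also yields inner regularity for sets in $\mathfrak M_F$, and in the compact case every set lies in $\mathfrak M_F$, giving (c).

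\emph{Representation, uniqueness, and the expected obstacle.} To prove $\Lambda f=\int f\,d\lambda$, it suffices by linearity to show $\Lambda f\le\int f\,d\lambda$ for real $f$. I slice the range of $f$ into intervals of length $\varepsilon$ and let $E_i$ be the corresponding preimages intersected with $\text{supp}(f)$. I choose open sets $V_i\supset E_i$ with $\lambda(V_i)$ close to $\lambda(E_i)$, and a partition of unity $h_i\prec V_i$ summing to $1$ on $\text{supp}(f)$. Then $\Lambda f=\sum\Lambda(h_i f)$ is bounded termwise, and letting $\varepsilon\to0$ gives the inequality. I expect this estimate to be the main obstacle, since the slicing error and the open-approximation error must be controlled simultaneously. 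Uniqueness follows from regularity: if $\lambda_1,\lambda_2$ both represent $\Lambda$, Urysohn functions squeezed between $K$ and $V$ give $\lambda_1(K)\le\lambda_2(V)$. Hence $\lambda_1(K)\le\lambda_2(K)$ by outer regularity of $\lambda_2$, and by symmetry the two agree on compact sets, hence everywhere.
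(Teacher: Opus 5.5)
Your outline is the standard construction in \cite{rudin}: define $\lambda$ on open sets through functions $f\prec V$, extend by outer approximation, build $\mathfrak M_F\subset\mathfrak M$, obtain the representation by slicing the range of $f$, and get uniqueness from Urysohn functions. That is all the paper relies on, since it states the theorem without proof and simply cites Rudin.

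Your two departures from the literal statement are necessary corrections, not gaps. In (a) you need the normalization $0\le f\le 1$, since otherwise the supremum is $+\infty$ as soon as it is positive. In (c), inner regularity of every $E\in\mathfrak M$ can fail for non-$\sigma$-compact $X$, so restricting it to the compact case $X=\T^\N$, where $\mathfrak M=\mathfrak M_F$, is the correct reading.
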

\noindent Hence, $\mathbb{T}^\mathbb{N}$ admits a $\sigma$-algebra $\mathfrak{M}$ containing all Borel sets such that there exists a unique positive measure $\lambda$ on $\mathfrak{M}$ associated to the linear functional $\Lambda$ (defined in \eqref{tilde} and \eqref{functionalmeasure}) in the sense of the above theorem. Thus, by restricting $\lambda$ to the Borel algebra $\mathfrak{B}$ and taking \eqref{prob} into account, we conclude that $\mu := \lambda_{|\mathfrak{B}}$ is the unique regular Borel probability measure on $\mathbb{T}^\mathbb{N}$ that represents $\Lambda$, i.e.,

\begin{equation}
\Lambda(f)=\int_{\mathbb{T}^\mathbb{N}}f \, d\mu, \quad \forall f \in \mathscr{C}^0(\mathbb{T}^\mathbb{N},\mathbb{R}).
\end{equation}

\begin{thm}
	Let $\theta' \in \mathbb{T}^\mathbb{N}$ and let $R_{\theta'}: \mathbb{T}^\mathbb{N} \to \mathbb{T}^\mathbb{N}$ be the translation defined by $R_{\theta'}(\theta) = \theta * \theta'$, for all $\theta \in \T^\N$. The unique Borel measure $\mu$ associated to $\Lambda$ (as defined in \eqref{functionalmeasure}) in the sense of Theorem \ref{Rieszthm} is $R_{\theta'}$-invariant.
\end{thm}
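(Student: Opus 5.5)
The plan is to show that the pushforward $(R_{\theta'})_*\mu$ represents the same functional $\Lambda$, and then to invoke the uniqueness part of Theorem \ref{Rieszthm}. Since $R_{\theta'}$ is a homeomorphism of $\T^\N$, the pushforward $\mu' := (R_{\theta'})_*\mu$ is again a Borel probability measure. It is also regular, because $\T^\N$ is a compact metrisable space and every finite Borel measure on such a space is regular. (Alternatively, regularity transfers directly, since $R_{\theta'}$ maps open sets to open sets and compact sets to compact sets.) So, by uniqueness, it suffices to prove
\begin{equation}
\int_{\T^\N} f\circ R_{\theta'}\, d\mu = \int_{\T^\N} f\, d\mu, \qquad \forall f \in \mathscr{C}^0(\T^\N,\R),
\end{equation}
that is, $\Lambda(f\circ R_{\theta'})=\Lambda(f)$. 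We then get $\mu'=\mu$, which is precisely the statement $\mu(R_{\theta'}^{-1}(U))=\mu(U)$ for every Borel set $U$.

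The first step is to check this identity on trigonometric polynomials. Take a character $e_\nu(\theta)=e^{\im\Theta\cdot\nu}$ with $\nu\in\Z^{(\N)}$. Then $e_\nu\circ R_{\theta'} = e^{\im\Theta'\cdot\nu}\,e_\nu$, and this is well defined because $\nu$ has finite support and integer entries. By the computation following \eqref{tilde}, $\tilde\Lambda(e_\nu)=0$ for $\nu\neq0$ and $\tilde\Lambda(e_0)=1$. Hence $\tilde\Lambda(e_\nu\circ R_{\theta'})=e^{\im\Theta'\cdot\nu}\tilde\Lambda(e_\nu)=\tilde\Lambda(e_\nu)$ for every $\nu$, because the phase factor is $1$ when $\nu=0$. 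There is a small bookkeeping point here: real polynomials $p$ should be handled by applying linearity to the complex characters, while keeping the condition $a_\nu=\overline{a_{-\nu}}$. The composite $p\circ R_{\theta'}$ has coefficients $a_\nu e^{\im\Theta'\cdot\nu}$, which still satisfy this conjugation condition, so $p\circ R_{\theta'}\in\mathscr{P}(\T^\N,\R)$ and $\tilde\Lambda(p\circ R_{\theta'})=a_0=\tilde\Lambda(p)$.

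The second step passes to continuous functions by density. Given $f\in\mathscr{C}^0$, choose $p_n\to f$ uniformly, as provided by Stone--Weierstrass. Since $\|p_n\circ R_{\theta'}-f\circ R_{\theta'}\|_\infty=\|p_n-f\|_\infty$, we also have $p_n\circ R_{\theta'}\to f\circ R_{\theta'}$ uniformly. By the definition \eqref{functionalmeasure} of $\Lambda$ as the continuous extension,
\begin{equation}
\Lambda(f\circ R_{\theta'})=\lim_{n\to\infty}\tilde\Lambda(p_n\circ R_{\theta'})=\lim_{n\to\infty}\tilde\Lambda(p_n)=\Lambda(f).
\end{equation}
The change of variables formula gives $\int f\,d\mu'=\int f\circ R_{\theta'}\,d\mu=\Lambda(f)$, so $\mu'$ represents $\Lambda$.

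The main obstacle, mild as it is, is the final uniqueness step. Theorem \ref{Rieszthm} gives uniqueness among measures with the stated regularity properties, so the argument must confirm that $\mu'$ lies in that class. I would settle this by noting that regularity is preserved under homeomorphisms: for open $V$ the preimage $R_{\theta'}^{-1}(V)=R_{-\theta'}(V)$ is open, and similarly for compact sets. Alternatively one can cite the regularity of finite Borel measures on compact metric spaces. Everything else is routine.
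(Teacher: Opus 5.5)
Your proof is correct and follows the same route as the paper. Both reduce invariance of $\mu$ to the identity $\Lambda(f\circ R_{\theta'})=\Lambda(f)$, check it on trigonometric polynomials, and extend it to $\mathscr{C}^0(\T^\N,\R)$ by uniform density and continuity of $\Lambda$. The differences are minor: the paper passes from the functional identity to the measure through properties (a) and (c) of Theorem \ref{Rieszthm}, while you apply uniqueness of the regular representing measure to the pushforward $(R_{\theta'})_*\mu$, and you write out the computation on characters that the paper leaves as ``clearly''.
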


\begin{proof}
	By Theorem \ref{Rieszthm}, specifically properties $(a)$ and $(c)$, it suffices to show that $\Lambda(f \circ R_{\theta'})=\Lambda(f)$ for all $f \in \mathscr{C}^0(\mathbb{T}^\mathbb{N}, \mathbb{R})$. 
	First, observe that this equality clearly holds for any trigonometric polynomial $p \in \mathscr{P}(\mathbb{T}^\mathbb{N}, \mathbb{R})$. 
	
	Now, let $f \in \mathscr{C}^0(\mathbb{T}^\mathbb{N}, \mathbb{R})$ and let $(p_n)_{n \in \mathbb{N}}$ be a sequence of trigonometric polynomials in $\mathscr{P}(\mathbb{T}^\mathbb{N}, \mathbb{R})$ that converges uniformly to $f$. It follows that the sequence $(p_n \circ R_{\theta'})_{n \in \mathbb{N}}$ converges uniformly to $f \circ R_{\theta'}$. Consequently, by the continuity of the functional $\Lambda$, we have:
	
	\begin{equation}
	\Lambda(f \circ R_{\theta'}) = \lim_{n \to \infty} \Lambda(p_n \circ R_{\theta'}) = \lim_{n \to \infty} \Lambda(p_n) = \Lambda(f). 
	\end{equation}
\end{proof}

In particular, $\mu$ is $\Phi^t_\om$-invariant for all $\om \in \mathbb{R}^\N$ and any time $t$. Thus, \eqref{tilde} and \eqref{functionalmeasure} define a probability invariant Borel measure for any Kronecker flow $(\T^\N,\Phi_\om)$.
\begin{rmk}
	The measure just constructed is nothing but the Haar measure on the infinite torus, and corresponds to the invariant integral introduced by Jessen in \cite{Jess}. For references on the Haar measure, see for e.g. \cite{dieudonne}.
\end{rmk}
\paraga \textit{Proof of Theorem \ref{A}.} Let us first prove that, given $f \in \mathscr{C}^0(\T^\N,\R)$, its time average along any orbit of a non-resonant Kronecker flow converges uniformly to the integral of $f$ with respect to the Haar measure over the whole torus. This step is crucial for the proof of Theorem \ref{A}.
\begin{thm}\label{avecon}
		If $\mathscr{R}_\om=\{0\}$, then $\forall f \in \mathscr{C}^0(\T^\N,\mathbb{R})$
		\begin{equation}\label{time-stat average}
		\lim_{T\to \infty}\bigg\|\frac{1}{T}\int_{0}^{T}f \circ \Phi^t_\om \, dt -\int_{\T^\N}f \, d\mu\bigg\|_{\infty}=0.
		\end{equation}
\begin{proof}
	Fix $f \in \mathscr{C}^0(\T^\N,\R)$.  If $f$ is constant, then \eqref{time-stat average} is trivial. Moreover, $ \forall \nu \in \Z^{(\N)}\setminus\{0\}$,
	\begin{equation}
	\lim_{T\to \infty}\sup_{\theta\in \T^\N}\bigg| \frac{1}{T}\int_{0}^{T}e^{\im\nu \cdot (\Theta+\om t)} \, dt -\int_{\T^\N}e^{\im \nu \cdot \Theta} \, d\mu \bigg|=\lim_{T\to \infty}\sup_{\theta\in \T^\N}\bigg| \frac{1}{T}e^{\im\nu \cdot \Theta}\dfrac{e^{\im\nu \cdot \om T}-1}{\im\om \cdot \nu} \bigg|=0. 
	\end{equation}
	
	This is also true for any finite linear combination, i.e., for any trigonometric polynomial.
	Now, given $\eps>0$, let $p\in \mathscr P(\T^\N,\mathbb{R})$ be such that $\|f-p\|_\infty<\frac{\eps}{3}$ and let $T_{\frac{\eps}{3}}$ be such that
	\begin{equation}
	\bigg\|\frac{1}{T}\int_{0}^{T}p \circ \Phi^t_\om \, dt -\int_{\T^\N}p \, d\mu \bigg\|_{\infty}<\dfrac{\eps}{3} \quad \forall T>T_{\frac{\eps}{3}}.
	\end{equation} 
	Then,
	\begin{gather}
	\bigg\| \frac{1}{T}\int_{0}^{T}f \circ \Phi^t_\om \, dt -\int_{\T^\N}f \, d\mu \bigg\|_{\infty} \leq\bigg\| \frac{1}{T}\int_{0}^{T}(f-p) \circ \Phi^t_\om \, dt\bigg\|_{\infty}\\
	+\bigg\| \frac{1}{T}\int_{0}^{T}p \circ \Phi^t_\om \, dt -\int_{\T^\N}p \, d\mu \bigg\|_{\infty}+\bigg|\int_{\T^\N}(f-p) \, d\mu\bigg|\\
	\leq 2\|f-p\|_\infty+\bigg\|\frac{1}{T}\int_{0}^{T}p \circ \Phi^t_\om \, dt -\int_{\T^\N}p \, d\mu \bigg\|_{\infty} \ < \eps.
	\end{gather}
	Therefore, for all $\eps>0$, we have
	\begin{equation}
	\bigg\| \frac{1}{T}\int_{0}^{T}f \circ \Phi^t_\om \, dt -\int_{\T^\N}f \, d\mu \bigg\|_{\infty}<\eps \quad \forall T>T_\frac{\eps}{3}.
	\end{equation}
\end{proof}
	\end{thm}

\begin{thmA}\label{A}
The following properties of a Kronecker flow on $\T^\N$ are equivalent:
	\begin{itemize}
		\item[(a)] it is non-resonant;
		\item[(b)] it is uniquely ergodic;
		\item[(c)] it is minimal;
		\item[(d)] it is topologically transitive.
	\end{itemize}
\begin{proof}
	$(a)\Rightarrow (b)$. Let $\mu'$ be an invariant regular probability Borel measure for the Kronecker flow $(\T^\N,\Phi_\om)$. Then, for any continuous function $f:\T^\N \to \R$, one has
	\begin{equation}\label{fubini}
	\int_{\T^\N}f \, d\mu'=\lim_{T \to \infty} \dfrac{1}{T}\int_{0}^T\bigg(\int_{\T^\N}f \, d\mu'\bigg) \, dt = \lim_{T \to \infty} \dfrac{1}{T}\int_{0}^T\bigg(\int_{\T^\N}f\circ \Phi^t_\om \, d\mu'\bigg) \, dt \,.
	\end{equation}
	Since $|f\circ \Phi^t_\om|$ is a continuous function on the compact domain $[0,T] \times \T^\N$, its integral over the product Borel measure space $[0,T] \times \T^\N$ is finite. Therefore, by Fubini's Theorem, we can exchange the order of integration, namely, 
	\begin{equation}
	\eqref{fubini}=\lim_{T\to \infty}\int_{\T^\N}\frac{1}{T}\bigg(\int_{0}^Tf\circ\Phi^t_\om \, dt\bigg) \, d\mu'.
	\end{equation}
For all $\theta \in \mathbb{T}^\mathbb{N}$ and $T > 0$, we observe that
\begin{equation}\label{dominante}
\left| \frac{1}{T}\int_{0}^T f \circ \Phi^t_\omega(\theta) \, dt \right| \leq \sup_{\theta \in \mathbb{T}^\mathbb{N}} |f(\theta)| = \|f\|_{\infty}.
\end{equation}
Since $f \in \mathscr{C}^0(\mathbb{T}^\mathbb{N}, \mathbb{R})$ and $\mathbb{T}^\mathbb{N}$ is compact, the constant $\|f\|_{\infty}$ is finite and thus provides an integrable dominating function on the probability space $(\mathbb{T}^\mathbb{N}, \mu')$. 

Furthermore, if $\omega$ is non-resonant, Theorem \ref{avecon} ensures that the time average converges uniformly (and thus pointwise) to $\int_{\mathbb{T}^\mathbb{N}} f \, d\mu$ as $T \to \infty$. Since the map 
\begin{equation*}
T \mapsto \frac{1}{T} \int_{0}^T f \circ \Phi^t_\omega \, dt
\end{equation*}
is continuous and its limit as $T \to \infty$ exists, this limit is uniquely determined and coincides with the limit along any sequence $(T_n)_{n \in \mathbb{N}}$ such that $T_n \to \infty$ as $n \to \infty$. This allows us to invoke the Lebesgue's Dominated Convergence Theorem for any such sequence, effectively justifying the interchange of the limit and the integral over $\mathbb{T}^\mathbb{N}$. Therefore, we obtain:
\begin{equation}
\eqref{fubini} = \int_{\mathbb{T}^\mathbb{N}} \left( \lim_{T \to \infty} \frac{1}{T} \int_{0}^T f \circ \Phi^t_\omega \, dt \right) d\mu' = \int_{\mathbb{T}^\mathbb{N}} \left( \int_{\mathbb{T}^\mathbb{N}} f \, d\mu \right) d\mu'.
\end{equation}
	Using the fact that $\mu'$ is a probability measure, we finally get
	\begin{equation}
	\int_{\T^\N}f \, d\mu'=\int_{\T^\N}f \, d\mu \,.
	\end{equation}
	Since $f\in \mathscr{C}^0(\T^\N,\mathbb{R})$ is arbitrary, by Theorem \ref{Rieszthm}, we conclude that $\mu'=\mu$.
	\medskip
	
	\noindent $(b) \Rightarrow (a)$. Assume that there exists $\nu \in \Z^{(\N)}\setminus\{0\}$ such that $\om\cdot \nu=0$. By Lemma \ref{reduction}, $(\T^\N,\Phi_\om)$ is topologically conjugate to another Kronecker flow whose frequency vector $\tilde{\om}$ has the first component equal to zero. Given $x \in \T$, define the linear functional
	\begin{equation}
	\tilde{\Lambda}_{x}:\mathscr{P}(\T^\N,\mathbb{R})\ni p \mapsto \lim_{N \to \infty}\dfrac{1}{(2\pi)^{N-1}}\int_{\T^{N-1}}p(x,\Theta_2,\Theta_3, \dots) \, d\Theta_2 \dots d\Theta_{N}.
	\end{equation}
As in the case discussed in the introduction to this section, all these functionals extend to positive linear functionals $\Lambda_x$ on the entire space $\mathscr{C}^0(\mathbb{T}^\mathbb{N}, \mathbb{R})$. To each of them, there is associated a unique regular Borel probability measure $\mu_x$ in the sense of Theorem \ref{Rieszthm}. 

Moreover, these measures are all invariant under the Kronecker flow $(\mathbb{T}^\mathbb{N}, \Phi_{\tilde{\omega}})$. Let $\mathscr{A}$ be the automorphism of $\mathbb{T}^\mathbb{N}$ that conjugates $(\mathbb{T}^\mathbb{N}, \Phi_\omega)$ to $(\mathbb{T}^\mathbb{N}, \Phi_{\tilde{\omega}})$. Then, for any $x \in \mathbb{T}$, the pullback probability Borel measures $\mathscr{A}^*\mu_x$ are invariant under $(\mathbb{T}^\mathbb{N}, \Phi_\omega)$. Clearly, for distinct values of $x \in \mathbb{T}$, the resulting pullback measures $\mathscr{A}^*\mu_x$ are distinct, thereby proving the existence of multiple invariant probability measures for the flow $(\mathbb{T}^\mathbb{N}, \Phi_\omega)$.
	\medskip
	
	\noindent $(a)\Rightarrow(c)$. Let $\om\in \R^\N$ be non-resonant. Suppose that there exists a non-empty open set $V\subset \T^\N$ that is never visited by a certain orbit. Then the statement of Theorem \ref{avecon} would be violated, for instance, by any continuous function $f$ which is strictly positive on a compact subset $K\subset V$ and such that $f|_{(\T^\N\setminus V)}\equiv 0$; such a function exists by Urysohn's Lemma.
	\medskip
	
	\noindent $(c)\Rightarrow(a)$. We prove the contrapositive. As in the proof of $(b)\Rightarrow(a)$, if a Kronecker flow is resonant, it is topologically conjugate through a certain $\mathscr{A}\in\mathrm{Aut}(\T^\N)$ to another Kronecker flow whose frequency vector has the first component equal to zero. Let $\theta \in \T^\N$. Then, the open set $\mathscr{A}^{-1}(C)$, where $C$ is the open cylinder 
	\begin{equation}
	C:=(\T\setminus\{\Theta_1+2\pi\Z\})\times\prod_{j=2}^{\infty}\T\subset \T^\N,
	\end{equation}
	is never visited by the orbit of $\mathscr{A}^{-1}(\theta)$.
	\medskip
	
	\noindent $(d)\Rightarrow(c)$. It is an immediate consequence of Remark \ref{orb-Oom}.
	\medskip
	
	\noindent $(c)\Rightarrow(d)$. It follows trivially by definition.
\end{proof}
\end{thmA}

\section{The Resonant Case: an Algebraic Viewpoint}\label{ccr}
\subsection{The \textquotedblleft Free" Case}\label{thefreecase} The main goal is to give a proof of Theorem \ref{B}, which gives a dynamical characterization of the class of Kronecker flows whose frequency module is a free abelian group. 

\begin{notation}[Closure of the orbit through the origin]
	Given a frequency vector $\omega \in \mathbb{R}^{\mathbb{N}}$, we denote by $\mathcal{O}_{\omega}$ the compact subgroup of $\mathbb{T}^\mathbb{N}$ obtained by taking the closure of the orbit through the origin induced by the Kronecker flow $(\mathbb{T}^\mathbb{N}, \Phi_\omega)$:
	\begin{equation}
	\mathcal{O}_\omega := \overline{\{ \Phi_\omega^t(0) \; : \; t \in \mathbb{R} \}}.
	\end{equation}
\end{notation}

\begin{rmk}
	To see that $\mathcal{O}_\omega$ is a subgroup, let $E = \{ \Phi_\omega^t(0)  : t \in \mathbb{R} \}$. Since the map $t \mapsto \Phi_\omega^t(0)$ is a continuous group homomorphism, its image $E$ is a subgroup of $\mathbb{T}^\mathbb{N}$. Now, let $\theta, \varphi \in \mathcal{O}_\omega$. By definition of closure, there exist sequences $(\theta_n)_{n \in \N}, (\varphi_n)_{n \in \N} \subset E$ such that $\theta_n \to \theta$ and $\varphi_n \to \varphi$ as $n\to \infty$. Due to the continuity of the group operations in $\mathbb{T}^\mathbb{N}$, we have:
	\begin{enumerate}
\item $\theta_n * \varphi_n \to \theta * \varphi$. The fact that $E$ is a subgroup ensures $(\theta_n * \varphi_n) \in E$ for all $n \in \N$, so $(\theta * \varphi) \in \overline{E} = \mathcal{O}_\omega$.
		\item $-\theta_n \to -\theta$. Since $E$ is a subgroup, $-\theta_n \in E$ for all $n \in \N$, hence $-\theta \in \overline{E} = \mathcal{O}_\omega$.
	\end{enumerate}
	Thus, $\mathcal{O}_\omega$ is a subgroup. Its compactness follows from being a closed subset of the compact space $\mathbb{T}^\mathbb{N}$.
\end{rmk}

\begin{lemma}\label{lemmaB}
	Let $\omega, \tilde{\omega} \in \mathbb{R}^{\mathbb{N}}$ be two frequency vectors. Assume that:
	\begin{itemize}
		\item[(i)] the Kronecker flows $(\mathbb{T}^\mathbb{N}, \Phi_\omega)$ and $(\mathbb{T}^\mathbb{N}, \Phi_{\tilde{\omega}})$ are topologically conjugate via a homeomorphism $h: \mathbb{T}^\mathbb{N} \to \mathbb{T}^\mathbb{N}$ such that $h(0)=0$;
		\item[(ii)] the non-vanishing components $(\tilde{\omega}_j)_{j \in J}$ are rationally independent, where $J \subseteq \mathbb{N}$ is the support of $\tilde{\omega}$.
	\end{itemize}
	Then, $\mathscr{M}_\omega = \mathscr{M}_{\tilde{\omega}}$.
\end{lemma}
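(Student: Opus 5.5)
The plan is to identify $\mathscr{M}_\omega$ with an intrinsic dynamical invariant of the orbit through the origin: the set of frequencies that appear in the Bohr–Fourier expansions of continuous observables evaluated along that orbit. A conjugacy fixing $0$ preserves this set, so the two modules must coincide. For $g:\R\to\C$ bounded and continuous and $\lambda\in\R$, define the Bohr coefficient
\begin{equation}
a(\lambda;g):=\lim_{T\to\infty}\frac{1}{T}\int_0^T g(t)e^{-\im\lambda t}\,dt,
\end{equation}
whenever the limit exists. For $\omega\in\R^\N$, set
\begin{equation}
\Sigma_\omega:=\big\{\lambda\in\R \; : \; a\big(\lambda; f\circ\Phi^{\cdot}_\omega(0)\big)\neq 0 \text{ for some } f\in\mathscr{C}^0(\T^\N,\C)\big\}.
\end{equation}
The key claim is that $\Sigma_\omega=\mathscr{M}_\omega$ for every $\omega$.

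\emph{Inclusion $\mathscr{M}_\omega\subseteq\Sigma_\omega$.} Given $\lambda=\nu\cdot\omega$ with $\nu\in\Z^{(\N)}$, take the character $f(\theta)=e^{\im\nu\cdot\Theta}$. It is continuous on $\T^\N$ because it depends on finitely many coordinates. Along the orbit through the origin it equals $e^{\im\lambda t}$, so $a(\lambda;f\circ\Phi^\cdot_\omega(0))=1$.

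\emph{Inclusion $\Sigma_\omega\subseteq\mathscr{M}_\omega$.} Fix $f$ continuous and $\lambda\notin\mathscr{M}_\omega$. By Stone–Weierstrass, as in Section~\ref{ue}, choose complex trigonometric polynomials $p_n$ converging uniformly to $f$. Each $p_n\circ\Phi^\cdot_\omega(0)$ is a finite sum $\sum c_\nu e^{\im(\nu\cdot\omega)t}$, and every exponent $\nu\cdot\omega$ differs from $\lambda$. The elementary computation already used in Theorem~\ref{avecon} then gives $a(\lambda;p_n\circ\Phi^\cdot_\omega(0))=0$.

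To pass from $p_n$ to $f$, use the estimate
\begin{equation}
\Big|\tfrac1T\int_0^T (g_1-g_2)e^{-\im\lambda t}\,dt\Big|\le\|g_1-g_2\|_\infty .
\end{equation}
An $\varepsilon/3$-argument identical to that in Theorem~\ref{avecon} then shows that $a(\lambda;f\circ\Phi^\cdot_\omega(0))$ exists and equals $0$.

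\emph{Transfer through $h$.} By (i) and $h(0)=0$, we have $h(\Phi^t_\omega(0))=\Phi^t_{\tilde\omega}(0)$ for all $t$. Hence, for any $f\in\mathscr{C}^0(\T^\N,\C)$,
\begin{equation}
f\circ\Phi^t_{\tilde\omega}(0)=(f\circ h)\circ\Phi^t_\omega(0),
\end{equation}
and $f\circ h$ is again continuous. This gives $\Sigma_{\tilde\omega}\subseteq\Sigma_\omega$. Using $h^{-1}$ in the same way gives the reverse inclusion, so $\mathscr{M}_\omega=\Sigma_\omega=\Sigma_{\tilde\omega}=\mathscr{M}_{\tilde\omega}$.

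The main obstacle is the second inclusion: one must check that the Bohr coefficient exists for $f\circ\Phi^\cdot_\omega(0)$ and not only for the approximating polynomials. This is handled by the uniform (sup-norm) Cauchy argument above. The argument does not appear to need hypothesis (ii). If one wants to use it, it offers a more concrete route to $\mathscr{M}_{\tilde\omega}\subseteq\Sigma_\omega$: by Theorem~\ref{A} applied on the support $J$, $\mathcal{O}_{\tilde\omega}$ is the whole subtorus $\T^J\times\{0\}$. This makes the characters $e^{\im\nu\cdot\Theta}$ with $\nu\in\Z^{(J)}$ distinct functions on $\mathcal{O}_{\tilde\omega}$, and they can be pulled back by $h$. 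I would nevertheless present the hypothesis-free version as the main proof.
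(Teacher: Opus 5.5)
Your proof is correct, and it takes a genuinely different route from the paper's.

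\textbf{The paper's route.} The paper works with the conjugacy itself, in four steps:
\begin{enumerate}
\item It shows that $h$ restricts to a topological group isomorphism between the orbit closures.
\item It uses (ii), through Theorem~\ref{A} applied on the support $J$, to identify $\mathcal{O}_{\tilde\om}$ with the coordinate subtorus $\pi_\N(E)$.
\item It lifts $h$ to a linear map on $E$ given by an integer matrix $A$ with finitely supported rows, so that $\om=A\tilde\om$, and then produces ``similarly'' an integer matrix $B$ with $\tilde\om_i=\sum_j b_{ij}\om_j$ for $i\in J$.
\item It compares the two modules, using rational independence to obtain $BA=I$ on $J$.
\end{enumerate}

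\textbf{Your route.} You identify $\mathscr{M}_\om$ with an intrinsic dynamical invariant, the Bohr spectrum of the orbit through the origin (equivalently, the group of continuous eigenvalues of the flow on $\mathcal{O}_\om$). The substitution $f\mapsto f\circ h$ visibly transports this invariant. Your two inclusions and the uniform $\varepsilon/3$ argument establishing existence of the Bohr coefficient are sound.

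\textbf{What your route buys.} It uses only Stone--Weierstrass and the averaging estimate of Theorem~\ref{avecon}. It never needs to know that $h$ is a homomorphism on orbit closures. It shows that $\mathscr{M}_\om$, as a subset of $\R$, is invariant under every topological conjugacy of Kronecker flows, so hypothesis (ii) is indeed superfluous, as you suspected. Even $h(0)=0$ can be dropped by using $f\circ R_{-h(0)}\circ h$ in place of $f\circ h$.

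It also sidesteps a step the paper passes over quickly. The matrix $B$ for $h^{-1}$ is not obtained literally as $A$ was, because $\mathcal{O}_\om$ is not a coordinate subtorus. Integrality and finite support of the rows of $B$ therefore need an extra ingredient, for instance extending characters from the closed subgroup $\mathcal{O}_\om$ to $\T^\N$.

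\textbf{What the paper's route buys.} It gives structural information on the conjugacy itself: $h$ is a group isomorphism on orbit closures, and $\om$, $\tilde\om$ are related by integer matrices with finitely supported rows, in the spirit of the $\mathrm{FGL}_\N(\Z)$ description of Section~\ref{secprel}. Your spectral argument does not provide this. Your argument is close in spirit to the shorter Pontryagin-duality proof that the paper mentions right after the lemma.
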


\begin{proof}
	By the conjugacy relation $h \circ \Phi^t_{\tilde{\omega}} = \Phi^t_\omega \circ h$, $h$ maps the orbit of the origin of the first flow into the orbit of the origin of the second. Since the infinite torus is compact, the continuity of $h$ implies $h(\mathcal{O}_{\tilde{\omega}}) = \mathcal{O}_\omega$.
	
	We first prove that the restriction $h|_{\mathcal{O}_{\tilde{\omega}}}$ is a group isomorphism. Let $\theta \in \mathcal{O}_{\tilde{\omega}}$; by definition of the closure of the orbit, there exists a sequence $(t_k)_{k \in \mathbb{N}} \subset \mathbb{R}$ such that $\theta = \lim_{k \to \infty} \Phi^{t_k}_{\tilde{\omega}}(0)$. Now, let $\varphi \in \mathcal{O}_{\tilde{\omega}}$ be any other point. Using the conjugacy and the fact that $\Phi^t(\varphi) = \Phi^t(0) * \varphi$, we compute:
	\begin{align*}
	h(\theta * \varphi) &= h\left(\lim_{k \to \infty} \Phi^{t_k}_{\tilde{\omega}}(0) * \varphi\right) = \lim_{k \to \infty} h(\Phi^{t_k}_{\tilde{\omega}}(\varphi)) \\
	&= \lim_{k \to \infty} \Phi^{t_k}_{\omega}(h(\varphi)) = \lim_{k \to \infty} (\Phi^{t_k}_{\omega}(0) * h(\varphi)) \\
	&= \left(\lim_{k \to \infty} h(\Phi^{t_k}_{\tilde{\omega}}(0))\right) * h(\varphi) \\
	&= h(\theta) * h(\varphi).
	\end{align*}
	Thus, $h|_{\mathcal{O}_{\tilde{\omega}}}$ is an injective group homomorphism. Since $h$ is a homeomorphism and $h(\mathcal{O}_{\tilde{\omega}}) = \mathcal{O}_\omega$, the same argument applied to $h^{-1}$ shows that $h|_{\mathcal{O}_{\tilde{\omega}}}$ is an isomorphism of topological groups between $\mathcal{O}_{\tilde{\omega}}$ and $\mathcal{O}_\omega$.
	
	Now, let $\hat{h}: \mathbb{R}^\mathbb{N} \to \mathbb{R}^\mathbb{N}$ be the unique lift of $h$ that fixes the origin (see Theorem \ref{lifts}). Since $h|_{\mathcal{O}_{\tilde{\omega}}}$ is a continuous homomorphism and $\mathcal{O}_{\tilde{\omega}} = \pi_\mathbb{N}(E)$ for $E = \prod_{j \in \mathbb{N}} X_j$ (where $X_j = \mathbb{R}$ if $j \in J$ and $\{0\}$ otherwise), the restriction $\hat{h}|_E$ is linear. From the conjugacy, we obtain $\hat{h}(\tilde{\omega}) = \omega$.
	
	Since $h$ is a continuous map on $\mathbb{T}^\mathbb{N}$ that restricts to a homomorphism on $\mathcal{O}_{\tilde{\omega}}$, its lift $\hat{h}$ on $E$ is represented by an integer matrix $A=(a_{ij})_{i\in\mathbb{N}, j\in J}$ where the continuity in the product topology ensures that each row has finite support (this follows by an argument analogous to that of Proposition \ref{matrix}). Thus, $\omega_i = \sum_{j \in J} a_{ij} \tilde{\omega}_j$ for all $i \in \mathbb{N}$. Similarly, for $h^{-1}$, there exists an integer matrix $B=(b_{ij})_{i \in J,j \in \N}$ with rows of finite support such that $\tilde{\omega}_i = \sum_{j \in \mathbb{N}} b_{ij} \omega_j$ for all $i \in J$.
	
	Substituting $\omega_j$ into the expression for $\tilde{\omega}_i$ ($i \in J$):
	\[ \tilde{\omega}_i = \sum_{j \in \mathbb{N}} b_{ij} \sum_{k \in J} a_{jk} \tilde{\omega}_k \implies \sum_{k \in J} \left( \sum_{j \in \mathbb{N}} b_{ij} a_{jk} - \delta_{ik} \right) \tilde{\omega}_k = 0. \]
	By the rational independence of $(\tilde{\omega}_k)_{k \in J}$, we have $\sum_{j \in \mathbb{N}} b_{ij} a_{jk} = \delta_{ik}$. This implies that for any $\tilde{\nu} \in \mathbb{Z}^{(J)}$ there exists $\nu \in \mathbb{Z}^{(\mathbb{N})}$ such that $\tilde{\nu}_j = \sum_{i \in \mathbb{N}} \nu_i a_{ij}$ (specifically, $\nu_i = \sum_{j \in J} \tilde{\nu}_j b_{ji}$). Consequently:
	\begin{equation}
	\mathscr{M}_\omega = \left\{ \sum_{i \in \mathbb{N}} \omega_i \nu_i = \sum_{j \in J} \tilde{\omega}_j \sum_{i \in \mathbb{N}} a_{ij} \nu_i \; : \; \nu \in \mathbb{Z}^{(\mathbb{N})} \right\} = \left\{ \sum_{j \in J} \tilde{\omega}_j \tilde{\nu}_j \; : \; \tilde{\nu} \in \mathbb{Z}^{(J)} \right\} = \mathscr{M}_{\tilde{\omega}} .
	\end{equation}
	This concludes the proof.
\end{proof}

Note that one can give a shorter and less cumbersome proof of the above lemma by invoking Pontryagin's Duality Theorem.

\begin{rmk}\label{remsupp}
From an algebraic point of view, for a Kronecker flow the property of having non-zero frequencies that are rationally independent implies that the frequency module is free. Indeed, in that case, the map
\begin{equation}
\bigoplus_{j\in J}\Z\ni \nu \to \sum_{j\in J}\om_j\nu_j \in \mathscr{M}_\om,
\end{equation}
where $J\subseteq \N$ is the support of $\om\in\R^\N$, is an isomorphism of abelian groups.
\end{rmk}

\noindent The observation above inspires the following

\begin{thmB}\label{B} 
Let $\om \in \R^\N$ and let $(\T^\N,\Phi_\om)$ be the corresponding Kronecker flow. Then, $\mathscr{M}_{\om}$ is a free abelian group if and only if $(\T^\N,\Phi_\om)$ is topologically conjugate to another Kronecker flow $(\T^\N,\Phi_{\tilde{\om}})$ whose non-vanishing frequencies are rationally independent. If this is the case, $\mathscr{M}_{\om}$ is isomorphic to $\Z^{(\text{supp}(\tilde{\om}))}$.
\end{thmB}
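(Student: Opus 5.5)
The "if" direction will follow from Lemma \ref{lemmaB} together with Remark \ref{remsupp}, after a normalization. Suppose $h$ conjugates $(\T^\N,\Phi_\om)$ and $(\T^\N,\Phi_{\tilde\om})$, with the non-vanishing components of $\tilde\om$ rationally independent.

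First replace $h$ by $\tilde h:=R_{-h(0)}\circ h$. Translations commute with every Kronecker flow, since $\Phi^t(\theta)=\Phi^t(0)*\theta$. Hence $\tilde h$ is still a conjugacy, and now $\tilde h(0)=0$.

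Lemma \ref{lemmaB} then gives $\mathscr M_\om=\mathscr M_{\tilde\om}$. Remark \ref{remsupp} gives $\mathscr M_{\tilde\om}\cong\Z^{(\mathrm{supp}(\tilde\om))}$. So $\mathscr M_\om$ is free, and the final assertion of the theorem holds as well.

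For the "only if" direction, the plan is to build the conjugacy explicitly as an automorphism $\mathscr A\in\mathrm{Aut}(\T^\N)$, using Propositions \ref{matrix} and \ref{autmodule}. The conjugacy relation $\mathscr A\circ\Phi^t_\om=\Phi^t_{B^*\om}\circ\mathscr A$ holds whenever $\mathscr A$ is represented by $B^*\in\mathrm{FGL}_\N(\Z)$. So it suffices to find $\mathscr B\in\mathrm{Aut}(\Z^{(\N)})$ with matrix $B$ such that $\tilde\om:=B^*\om$ has rationally independent non-zero entries.

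Dually, we need a new basis $\{f_i\}$ of $\Z^{(\N)}$ with the following property: the homomorphism $\phi:\nu\mapsto\om\cdot\nu$ maps part of the basis to zero and maps the remaining elements to a basis of $\mathscr M_\om$. Indeed, $\tilde\om_i=\om\cdot(B e_i)$, so we take $Be_i=f_i$.

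To construct this basis, consider the exact sequence $0\to\mathscr R_\om\to\Z^{(\N)}\xrightarrow{\phi}\mathscr M_\om\to0$ (Remark \ref{firstisothm}). Since $\mathscr M_\om$ is free, it splits. Thus $\Z^{(\N)}=\mathscr R_\om\oplus S$ with $\phi|_S$ an isomorphism onto $\mathscr M_\om$. The summand $\mathscr R_\om$ is free, as a subgroup of a free group. Choose bases $\{r_k\}$ of $\mathscr R_\om$ and $\{s_l\}$ of $S$; both are countable. Their union is a basis of $\Z^{(\N)}$, and we enumerate it as $\{f_i\}_{i\in\N}$. The enumeration should be chosen so that the index sets $K$ and $L$ for the two parts partition $\N$ (handling the finite cases separately). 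Then $\tilde\om_i=0$ for $i\in K$, and the $\tilde\om_i=\om\cdot s_l$ for $i\in L$ form a basis of $\mathscr M_\om$, hence are rationally independent and non-zero.

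The main obstacle is checking that the basis-change operator $\mathscr B:e_i\mapsto f_i$ really belongs to $\mathrm{Aut}(\Z^{(\N)})$. It is a well-defined homomorphism, because each $f_i$ has finite support, so the columns of $B$ are finitely supported. It is bijective because $\{f_i\}$ is a basis. Its inverse sends $e_j$ to the finite expansion of $e_j$ in the basis $\{f_i\}$, so the columns of $B^{-1}$ are also finitely supported. Proposition \ref{autmodule} then gives $B^*\in\mathrm{FGL}_\N(\Z)$, and Proposition \ref{matrix} provides $\mathscr A$. Finally, the identity $\Phi^t_{B^*\om}\circ\mathscr A=\mathscr A\circ\Phi^t_\om$ follows by linearity of the lift, $B^*(\Theta+\om t)=B^*\Theta+tB^*\om$.
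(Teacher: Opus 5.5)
Your proposal is correct and follows the paper's proof essentially step for step. For ``if'', you normalize $h(0)=0$ by a translation (the paper just assumes this without loss of generality) and then apply Lemma~\ref{lemmaB} with Remark~\ref{remsupp}; for ``only if'', you split $\Z^{(\N)}=\mathscr R_\om\oplus S$ and use the automorphism whose matrix $B^*$ has as rows a basis of $\mathscr R_\om$ together with a basis of the complement, which is exactly the paper's matrix $A$, and your check via Proposition~\ref{autmodule} that $B^*\in\mathrm{FGL}_\N(\Z)$ supplies a step the paper only asserts.
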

\begin{proof}
$(\implies)$ Let $(\nu^{(\alpha)})_{\alpha \in \Lambda}$, $\Lambda\subseteq \N$ be a set of elements of $\Z^{(\N)}$ such that
\begin{equation}
\bigg(\sum_{j\in \N}\om_j\nu^{(\alpha)}_j\bigg)_{\alpha \in \Lambda}
\end{equation}
is a basis for $\mathscr{M}_\om$. Then, $\Z^{(\N)}$ is isomorphic to
\begin{equation}\label{dirsum}
\mathscr{R}_\om\bigoplus\mathrm{span}_{\Z}(\nu^{(\alpha)})_{\alpha \in \Lambda}.
\end{equation}
Indeed, since $\mathscr{M}_\om$ is isomorphic to the group of cosets $\Z^{(\N)}/\mathscr{R}_\om$ (Remark \ref{firstisothm}), for any $\nu \in \Z^{(\N)}$, there exists a unique set of natural numbers $(n_\alpha)_{\alpha\in \Lambda}$ such that $\nu+\mathscr{R}_\om=\sum_{\alpha \in\Lambda}n_\alpha (\nu^{(\alpha)}+\mathscr{R}_\om)$, where only finitely many $n_\alpha$ are different from zero. Then, every $\nu \in \Z^{(\N)}$ can be decomposed into  $\nu=(\nu-\sum_{\alpha \in\Lambda}n_\alpha \nu^{(\alpha)})+\sum_{\alpha \in\Lambda}n_\alpha \nu^{(\alpha)}$. Of course, $\nu-\sum_{\alpha \in\Lambda}n_\alpha \nu^{(\alpha)}\in \mathscr{R}_\om$. This proves \eqref{dirsum}.  Being a submodule of a free $\Z$-module, $\mathscr{R}_\om$ is also free (see for e.g. \cite{rotman2002advanced}). Let $(\xi^{(\beta)})_{\beta\in\Gamma}$, where $\Gamma$ is a countable set of indices, be a basis for $\mathscr{R}_\om$. Consider any matrix $A$ constructed so that each row is either an element of $(\nu^{(\alpha)})_{\alpha \in \Lambda}$ or of $(\xi^{(\beta)})_{\beta\in\Gamma}$, and such that each element of $(\nu^{(\alpha)})_{\alpha \in \Lambda}$ and of $(\xi^{(\beta)})_{\beta\in\Gamma}$ appears exactly once as a row of $A$. This matrix corresponds to an automorphism $\mathscr{A} \in \mathrm{Aut}(\T^\N)$ that conjugates $(\T^\N,\Phi_\om)$ to $(\T^\N,\Phi_{\tilde{\om}})$, where $\tilde{\om}:=A\om$. By construction, the non-vanishing components of $\tilde{\om}$ are rationally independent.
\medskip
		
\noindent $(\impliedby)$ Let $h:\T^\N\to \T^\N$ be a homeomorphism such that, for all $t$ in $\R$,  
\begin{equation}
h\circ\Phi^t_{\tilde{\om}}=\Phi^t_{\om}\circ h \ ,
\end{equation}
where $\tilde{\om}\in\R^\N$ is such that the non-vanishing components of $\tilde{\om}$ are rationally independent. Without loss of generality, we assume $h(0)=0$. Then, the claim follows from Lemma~\ref{lemmaB} and Remark \ref{remsupp}.
\end{proof}

We already pointed out in the introduction that if $\mathscr{M}_\om$ is finitely generated, and in particular if $\om$ has finite support, then $\mathscr{M}_\om$ is free. This follows from the structure theorem for finitely generated abelian groups without torsion. 

Finally, if the resonance module $\mathscr{R}_\omega$ has finite rank, Proposition~\ref{propprel} provides a conjugacy to a flow $(\mathbb{T}^\mathbb{N}, \Phi_{\tilde{\omega}})$ whose non-vanishing frequencies are rationally independent. By Theorem~\ref{B}, this ensures that $\mathscr{M}_\omega$ must be a free abelian group, even though it is no longer finitely generated.

\subsection{Solenoidal Orbits}\label{ratflow}
In the general case, the topological structure of the orbits of a Kronecker flow on the infinite torus can be much more complicated. Inspired by Sakbaev and Volovich \cite{SakVol}, here we construct a class of Kronecker flows such that the closure of any orbit induced by Kronecker flows in such class is a solenoid, namely a compact, connected subgroup of $\T^\N$ which is locally homeomorphic to the product of an interval and a Cantor set (see Proposition \ref{localcantor}). These orbits have the property that they are uniformly in time approximated by periodic ones, with the period diverging as the approximation becomes more accurate. In accordance with the previous section, the frequency modules associated with such Kronecker flows must fail to be free abelian groups. More precisely, we shall see in the next section that they are non-free subgroups of the additive rationals (see Lemma \ref{modsol} and Remark \ref{dualsol}).

\begin{notation}
	We denote by $\Sigma$ the set of sequences of natural numbers ${\textit{\textbf{a}}}=(a_j)_{j \in \N}$ with $a_1=1$, and $a_j>1$ for all $j>1$.
\end{notation}

\begin{defn}[Solenoid]\label{Solenoid}
Given $\textbf{a}\in \Sigma$, the solenoid $\sigma_{\textbf{a}}$ is the compact, connected subgroup of $ \ \T^\N$ defined as
		\begin{equation}\label{sol}
		\sigma_{\textbf{a}}:=\bigg\{ \theta=(\Theta_{j}+2\pi\Z)_{j \in\N} \in \T^\N \;  : \;  \Theta_j=a_{j+1}\Theta_{j+1} \; \mathrm{mod} \ 2\pi, \ \forall j \in \N \bigg\}.
		\end{equation}
\end{defn}

\begin{thm}\label{thmsol}
	Let $\textit{\textbf{a}}\in \Sigma$, and consider $\om(\textit{\textbf{a}})=(\om_j(\textit{\textbf{a}}))_{j \in \N}$ defined by
	\begin{equation*}
	\om_{j}(\textit{\textbf{a}})=\prod_{k=1}^{j}a_{k}^{-1} \quad \text{for all } j\in\N.
	\end{equation*}
	Then, the closure of any orbit induced by the Kronecker flow $(\T^\N,\Phi_{\om(\textit{\textbf{a}})})$ is homeomorphic to the solenoid $\sigma_{\textit{\textbf{a}}}$.
\end{thm}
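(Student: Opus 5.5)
By Remark \ref{orb-Oom} it suffices to show that $\mathcal{O}_{\om(\textbf{a})}=\sigma_{\textbf{a}}$. We will in fact prove that the two sets are equal as subsets of $\T^\N$, so the homeomorphism is the identity. Write $\om_j=\om_j(\textbf{a})=1/(a_1\cdots a_j)$. The proof has two inclusions.

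\emph{Inclusion $\mathcal{O}_{\om(\textbf{a})}\subseteq\sigma_{\textbf{a}}$.} This is immediate. Since $\om_j=a_{j+1}\om_{j+1}$, we have $\om_j t=a_{j+1}\om_{j+1}t$ for every $t$, so each point $\Phi^t_{\om}(0)$ satisfies the defining relations \eqref{sol}. The set $\sigma_{\textbf{a}}$ is closed, because it is an intersection of preimages of $\{0\}$ under the continuous maps $\theta\mapsto \theta_j-a_{j+1}\theta_{j+1}$, each of which depends on finitely many coordinates. Hence the closure of the orbit is also contained in $\sigma_{\textbf{a}}$.

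\emph{Inclusion $\sigma_{\textbf{a}}\subseteq\mathcal{O}_{\om(\textbf{a})}$.} Fix $\theta\in\sigma_{\textbf{a}}$ and a basic open cylinder around $\theta$ restricting only the first $m$ coordinates to $\eps$-neighbourhoods. We look for $t\in\R$ with $\om_m t\equiv\Theta_m \pmod{2\pi}$, choosing $t=a_1\cdots a_m\Theta_m$.

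For $j\le m$ we then get
\[
\om_j t=\Big(\prod_{k=j+1}^m a_k\Big)\Theta_m\equiv\Theta_j \pmod{2\pi}.
\]
This congruence follows by iterating the relation $\Theta_{j}\equiv a_{j+1}\Theta_{j+1}$, which holds modulo $2\pi$ since multiplication by an integer is well defined on $\T$. So $\Phi^t_\om(0)$ agrees with $\theta$ exactly in the first $m$ coordinates and lies in the cylinder. Letting $m\to\infty$ produces a sequence $t_m$ with $\Phi^{t_m}_\om(0)\to\theta$ in the product topology, so $\theta\in\mathcal{O}_{\om(\textbf{a})}$.

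The only point needing care is this second inclusion, namely checking that the compatibility relations propagate correctly modulo $2\pi$ from the $m$-th coordinate down to the first. With the explicit choice of $t$ above there is no genuine obstacle. I would also note in passing that $\sigma_{\textbf{a}}$ is compact and connected, as the closure of the image of the connected set $\R$, which matches Definition \ref{Solenoid}.
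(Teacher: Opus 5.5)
Your proof is correct and follows the same route as the paper. The inclusion $\mathcal{O}_{\om(\textbf{a})}\subseteq\sigma_{\textbf{a}}$ comes from $\sigma_{\textbf{a}}$ being closed, and the reverse inclusion uses explicit times that reproduce the first $m$ coordinates exactly. Your times $t_m=a_1\cdots a_m\Theta_m$, taking $\Theta_m=2\pi\check\theta_m$, coincide with the paper's $t_k=2\pi\bigl(\check\theta_1+\sum_{m=2}^{k}n_m/\om_{m-1}(\textbf{a})\bigr)=2\pi\check\theta_k/\om_k(\textbf{a})$. So you simply bypass the digit bijection with $[0,1)\times\prod_{j\ge2}\{0,\dots,a_j-1\}$, and you write out the congruence check that the paper leaves as ``by construction''.
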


\begin{proof}
	By Remark \ref{orb-Oom}, it is sufficient to prove the statement for the orbit through the origin. Note that for any time $t \in \R$, the point $(\om_j(\textit{\textbf{a}})t + 2\pi\Z)_{j \in \N}$ belongs to $\sigma_{\textit{\textbf{a}}}$. Since $\sigma_{\textit{\textbf{a}}}$ is closed, the closure of the orbit through the origin is necessarily contained in $\sigma_{\textit{\textbf{a}}}$.
	
	To show the reverse inclusion, let $\theta \in \sigma_{\textit{\textbf{a}}}$. We seek a sequence of real numbers $(t_k)_{k \in \N}$ such that $\lim_{k \to \infty}(\om_j(\textit{\textbf{a}})t_k + 2\pi\Z) = \theta_j + 2\pi\Z$ for all $j \in \N$. To this end, we first establish a one-to-one correspondence between $\sigma_{\textit{\textbf{a}}}$ and the set 
	\begin{equation}
	[0,1) \times \prod_{j=2}^{\infty}\{0,1,\dots,a_j-1\} .
	\end{equation}
	Indeed, $\theta \in \sigma_{\textit{\textbf{a}}}$ if and only if, for each $j\geq 2$, there exists $n_j\in \Z$ such that $a_{j}\check{\theta}_{j}-\check{\theta}_{j-1}=n_j$, where $\check{\theta}_j$ is the unique representative of $(2\pi)^{-1}\theta_j$ in $[0,1)$. Since $0 \leq \check{\theta}_j < 1$, it follows that
	\begin{equation}
	-1 < -\check{\theta}_{j-1} \leq a_j\check{\theta}_j - \check{\theta}_{j-1} = n_j < a_j ,
	\end{equation}
	which implies $n_j \in \{0,1,\dots,a_j-1\}$. Thus, each $\theta \in \sigma_{\textit{\textbf{a}}}$ uniquely determines a pair $(\tau, (n_j)_{j \geq 2})$ with $\tau = \check{\theta}_1 \in [0,1)$ and $n_j=a_j\check{\theta}_{j}-\check{\theta}_{j-1}$.
	
	Conversely, given $\tau \in [0,1)$ and $(n_j)_{j \geq 2}$ in the specified ranges, we associate the unique $\theta \in \sigma_{\textit{\textbf{a}}}$ such that $\check{\theta}_1=\tau$ and
	\begin{equation}\label{s2}
	\check{\theta}_j=\om_j(\textit{\textbf{a}})\bigg(\tau+\sum_{m=2}^{j}\dfrac{n_m}{\om_{m-1}(\textit{\textbf{a}})}\bigg) \quad \text{for all } j\geq 2.
	\end{equation}
	This association is well-defined since $\check{\theta}_j$ belongs to $[0,1)$, as shown by the following estimate:
	\begin{equation*}
	0 \leq \tau+\sum_{m=2}^{j}\dfrac{n_m}{\om_{m-1}(\textit{\textbf{a}})} = \tau+\sum_{m=2}^{j}n_m\prod_{k=1}^{m-1}a_k < 1+\sum_{m=2}^{j}(a_m-1)\prod_{k=1}^{m-1}a_k = \prod_{k=1}^{j}a_k = \dfrac{1}{\om_j(\textit{\textbf{a}})}.
	\end{equation*}
	It is straightforward to check that $\theta \in \sigma_{\textit{\textbf{a}}}$ and that these two mappings are inverses of each other.
	
	Finally, for a fixed $\theta \in \sigma_{\textit{\textbf{a}}}$, we define the sequence of times $(t_k)_{k \in \N}$ as
	\begin{equation}
	t_1 := 2\pi \check{\theta}_1, \quad t_k := 2\pi\bigg(\check{\theta}_1+\sum_{m=2}^{k}\dfrac{n_m}{\om_{m-1}(\textit{\textbf{a}})}\bigg) \quad \text{for all } k \geq 2.
	\end{equation}
	By construction, for any fixed $j \in \N$, we have $\lim_{k \to \infty}(\om_j(\textit{\textbf{a}})t_k + 2\pi\Z) = \theta_j + 2\pi\Z$ in the product topology. This confirms that $\mathcal{O}_{\om(\textbf{\textit{a}})} = \sigma_{\textit{\textbf{a}}}$.
\end{proof}

Let us give a simple description of the local topological structure of solenoids. 

\begin{prop}\label{localcantor}
	For all $\textit{\textbf{a}}\in\Sigma$, any point of $\sigma_{\textit{\textbf{a}}}$ admits a neighbourhood which is homeomorphic to the product of an interval and a Cantor set.
\end{prop}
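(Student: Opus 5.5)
Since $\sigma_{\textit{\textbf{a}}}$ is a topological group, translations $\theta\mapsto\theta*\theta'$ with $\theta'\in\sigma_{\textit{\textbf{a}}}$ are homeomorphisms of $\sigma_{\textit{\textbf{a}}}$ onto itself. It therefore suffices to exhibit one neighbourhood of the origin homeomorphic to $I\times C$, with $I$ an open interval and $C$ a Cantor set. I will use the projection onto the first coordinate, $q_1|_{\sigma_{\textit{\textbf{a}}}}:\sigma_{\textit{\textbf{a}}}\to\T$, which is a continuous surjective homomorphism. Its kernel is
\begin{equation}
K:=\{\theta\in\sigma_{\textit{\textbf{a}}}\;:\;\theta_1=0\}.
\end{equation}
By the bijection built in the proof of Theorem \ref{thmsol}, $K$ corresponds exactly to the pairs $(0,(n_j)_{j\geq 2})$. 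So the plan is: (i) show $K$ is homeomorphic to $C:=\prod_{j\geq2}\{0,\dots,a_j-1\}$; (ii) show that $q_1^{-1}(U)\cap\sigma_{\textit{\textbf{a}}}$ is homeomorphic to $U\times K$ for a small arc $U\ni 0$.

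For (i), the map $K\ni\theta\mapsto(n_j)_{j\geq2}$ is a bijection. Each $n_j=a_j\check\theta_j-\check\theta_{j-1}$ takes values in a finite set, and on $K$ it depends only on finitely many coordinates. Continuity needs a little care with $\check\theta$ near $1$. A cleaner route is to observe that on $K$ one has $\theta_j\in\{2\pi k\,\om_j(\textit{\textbf{a}})\}$, a discrete finite set for each $j$. Hence the map $\theta\mapsto(\theta_j)_{j}$ identifies $K$ with a closed subset of a product of finite discrete sets, and the $n_j$ are continuous functions of $(\theta_1,\dots,\theta_j)$ there. The map is then a continuous bijection from a compact space onto a Hausdorff space, hence a homeomorphism. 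The space $C$ is a product of countably many finite discrete spaces, each with at least two points since $a_j>1$. So $C$ is compact, metrisable, totally disconnected and perfect, and by Brouwer's characterization it is a Cantor set.

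For (ii), take $U:=\{\Theta_1+2\pi\Z:\ |\Theta_1|<\pi\}$, with $\Theta_1$ the representative in $(-\pi,\pi)$. I will define a continuous local section $s:U\to\sigma_{\textit{\textbf{a}}}$ by
\begin{equation}
s(\Theta_1+2\pi\Z):=\big(\om_j(\textit{\textbf{a}})\Theta_1+2\pi\Z\big)_{j\in\N}.
\end{equation}
This is just the orbit point at time $t=\Theta_1$, so it lies in $\sigma_{\textit{\textbf{a}}}$. Its continuity in the product topology follows coordinatewise. I then set
\begin{equation}
F:U\times K\to q_1^{-1}(U)\cap\sigma_{\textit{\textbf{a}}},\qquad F(x,k):=s(x)*k.
\end{equation}
This map is continuous, and its inverse is $\theta\mapsto\big(q_1(\theta),\,\theta*s(q_1(\theta))^{-1}\big)$. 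The inverse is continuous as well, because $q_1$ and $s$ are continuous and inversion in $\T^\N$ is continuous. The image of $F$ is an open neighbourhood of $0$ in $\sigma_{\textit{\textbf{a}}}$, and $U$ is homeomorphic to an open interval, which finishes the argument.

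The main obstacle is step (i): proving carefully that $K$ is homeomorphic to the digit space, including the verification that it is a Cantor set. Step (ii) is the routine local-triviality argument for a principal $K$-bundle. An alternative for (i), if the discreteness argument proves clumsy, is to note that $K$ is the inverse limit of the finite cyclic groups $\Z/(a_2\cdots a_j)\Z$.
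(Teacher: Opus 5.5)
Your proof is correct. It is organized differently from the paper's, although the chart it produces turns out to be the same.

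The paper first makes the same translation reduction you make explicit, phrased there as ``without loss of generality $\varphi_1=\pi+2\pi\Z$''. It then works on $\{\theta_1\neq 0\}\cap\sigma_{\textbf{a}}$ with the explicit digit map $f(\theta)=(\check\theta_1,(a_j\check\theta_j-\check\theta_{j-1})_{j\geq 2})$ inherited from Theorem~\ref{thmsol}. Continuity of $f$ is proved by a sequential argument on the explicit formulas for $\check\theta_j$, continuity of $f^{-1}$ is read off from \eqref{s2}, and compactness, total disconnectedness and perfectness of $\prod_{j\geq 2}\{0,\dots,a_j-1\}$ are checked by hand.

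Formula \eqref{s2} says precisely that $f^{-1}(\tau,(n_j)_j)=\Phi^{2\pi\tau}_{\om(\textbf{a})}(0)*\kappa$, with $\kappa\in\sigma_{\textbf{a}}$ and $\kappa_1=0$. So the paper's chart is your trivialization $F$ composed with your identification $K\cong C$, taken over the arc $\T\setminus\{0\}$ rather than $\T\setminus\{\pi\}$.

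What your organization buys is a clean separation of the two ingredients. The product structure comes for free from continuity of the group operations once the section $s$ is available. Note that $s$ is a section because $\om_1(\textbf{a})=a_1^{-1}=1$, which is exactly what puts $\theta*s(q_1(\theta))^{-1}$ in $K$; you should say this explicitly. The digits only enter on the compact fibre $K$. There the inclusion $K\subset\prod_j\{2\pi k\,\om_j(\textbf{a})+2\pi\Z : k\in\Z\}$ makes every coordinate locally constant. The compact-to-Hausdorff lemma then spares you checking continuity of the inverse, and you avoid any boundary issues with $\check\theta_j$.

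Your local-triviality step uses nothing about $\sigma_{\textbf{a}}$ beyond its group structure and a continuous local section of $q_1$, here supplied by the flow. That makes it the more conceptual and more portable argument. The paper's version, in exchange, gives explicit global coordinates on the complement of a single fibre by directly reusing the bijection from Theorem~\ref{thmsol}. Finally, invoking Brouwer's characterization, with metrizability stated, settles the identification with the standard Cantor set, for which the paper only lists the defining properties.
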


\begin{proof}
	Let $\varphi=(\varphi_j)_{j \in \N} \in \sigma_{\textit{\textbf{a}}}$. Without loss of generality, we can always assume that $\varphi_1=\pi+2\pi\Z$. Consider the open neighbourhood $U$ of $\varphi$ in $\sigma_{\textit{\textbf{a}}}$ obtained by taking the intersection
	\begin{equation*}
	U := \bigg[ \Big( \T \setminus \{0 + 2\pi\Z\} \Big) \times \prod_{j=2}^{\infty} \T \bigg] \cap \sigma_{\textit{\textbf{a}}}.
	\end{equation*}
	
	We shall show that $U$ is homeomorphic to the product of the interval $(0,1)$ and the Cantor set $K := \prod_{j=2}^{\infty}\{0,1,\dots,a_j-1\}$. By the same argument used in the proof of Theorem \ref{thmsol}, $U$ is in a one-to-one correspondence with $(0,1) \times K$.
	
	Let us show that the map
	\begin{equation*}
	f: U \ni \theta \mapsto (\check{\theta}_1, (a_j\check{\theta}_j - \check{\theta}_{j-1})_{j=2}^\infty) \in (0,1) \times K
	\end{equation*}
	is a homeomorphism, where $(0,1)$ carries the standard topology, each $\{0,1,\dots,a_j-1\}$ is considered as discrete, and the infinite product is endowed with the product topology. Consider $\theta \in U$ and let $(\theta^k)_{k \in \N}$, $\theta^k \in U$ for all $k \in \N$, be a sequence converging to $\theta$. Then,
	\begin{gather*}
	\lim_{k \to \infty} (\theta^k_1 + 2\pi\Z) = \theta_1 + 2\pi\Z, \\
	\lim_{k \to \infty} (\theta^k_j + 2\pi\Z) = \theta_j + 2\pi\Z, \quad \forall j > 1.
	\end{gather*}
	Since $\theta \in U$, the first limit reads $\lim_{k \to \infty} \check{\theta}^k_1 = \check{\theta}_1$. The second limit is
	\begin{equation}\label{413}
	\lim_{k \to \infty} \bigg( \om_j \sum_{m=2}^j \frac{n^k_m}{\om_{m-1}} + \Z \bigg) = \om_j \sum_{m=2}^j \frac{n_m}{\om_{m-1}} + \Z, \quad \forall j \geq 2,
	\end{equation}
	where $n^k_m = a_m\check{\theta}^k_m - \check{\theta}^k_{m-1}$ and $n_m = a_m\check{\theta}_m - \check{\theta}_{m-1}$. But, for all $k \in \N$, we have
	\begin{equation*}
	\om_j \sum_{m=2}^j \frac{n^k_m}{\om_{m-1}} \leq 1 - \om_j < 1, \quad \text{for all } j \geq 2.
	\end{equation*}
	Thus, \eqref{413} reads
	\begin{equation*}
	\lim_{k \to \infty} \sum_{m=2}^j \frac{n^k_m}{\om_{m-1}} = \sum_{m=2}^j \frac{n_m}{\om_{m-1}}, \quad \forall j \geq 2,
	\end{equation*}
	which means that $\lim_{k \to \infty} n_j^k = n_j$ for all $j \geq 2$ because the $n_j^k$ take values in a discrete set. This proves that $f$ is continuous. The continuity of $f^{-1}$ follows from the fact that it is defined by the map \eqref{s2}, which is a composition of continuous functions.
	
	It remains to show that $K = \prod_{j=2}^{\infty}\{0,1,\dots,a_j-1\}$ is a Cantor set, namely, that it is compact, totally disconnected, and each point is an accumulation point. Compactness follows from Tychonoff's theorem. To see that $K$ is totally disconnected, which is a standard result for the product of discrete spaces, let $(n_j)_{j=2}^\infty \in K$ and consider $X \subseteq K$ such that $X$ contains $(n_j)_{j=2}^\infty$ and at least another distinct point $(n'_j)_{j=2}^\infty$. Let $k \in \N, k \geq 2$, be such that $n_k \neq n'_k$, and consider the open cylinder
	\begin{equation*}
	C := \{ (m_j)_{j=2}^\infty \in K : m_k = n_k \}.
	\end{equation*}
	Since each factor carries the discrete topology, the complement $\overline{C}$ of $C$ is also an open cylinder. Then, $X$ can be written as a union of disjoint non-empty open subsets $X = (C \cap X) \cup (\overline{C} \cap X)$. Finally, we observe that each point of $K$ is an accumulation point. Consider a list $(n_j)_{j=2}^\infty \in K$ and an open subset $X$ containing it. $X$ can be written as a union of open cylinders $\cup_\alpha C_\alpha$ such that $(n_j)_{j=2}^\infty$ belongs to some $C_\alpha$. Since $C_\alpha = \prod_{j=2}^\infty X_j$, where $X_j \neq \{0,1,\dots,a_j-1\}$ only for finitely many $j$, there always exists $(n'_j)_{j=2}^\infty \in C_\alpha$ such that $n_k \neq n'_k$ for some $k \geq 2$. This concludes the proof.
\end{proof}

\subsection{Orbits as Pontryagin Duals}\label{orbit-dual}
The first goal of this section is to show that, given a Kronecker flow on the infinite torus, the closure of any orbit is homeomorphic to the Pontryagin dual of the associated frequency module (Proposition \ref{mainprel}). Thereafter, given any countable family $(G_i)_{i\in I}$ of subgroups of $(\mathbb{Q},+)$, we construct a Kronecker flow such that its frequency module is isomorphic to $\bigoplus_{i \in I}G_i$. Then, by using Pontryagin's theory of locally compact abelian groups, together with Theorem \ref{thmsol} and Proposition \ref{mainprel}, we give the topological structure of the orbits. It turns out that the class of Kronecker flows so obtained is characterized by the fact that the closure of any orbit is a product of circles and solenoids (see Theorem \ref{productcirclesol}).

\paraga We begin by reviewing some fundamental definitions and results from Pontryagin's theory of locally compact abelian groups. For references, see the original paper by Lev Pontryagin \cite{Pon}, and \cite{morris} for a modern formulation of the theory. 

\begin{notation}
	In the following, the symbol $\cong$ denotes an isomorphism of topological groups. Consistently with the definition of $ \; \mathbb{T}$ given in \ref{notations}, the group operation on the one dimensional torus is always expressed in additive notation and denoted by $+$.
\end{notation}

\begin{defn}[Character]\label{cha}
	Let $G$ be a locally compact abelian group. A character on $G$ is a continuous homomorphism of groups $\chi:G\to \T$.
\end{defn}
\begin{defn}[Pontryagin Dual]\label{pd}
	The Pontryagin dual $G^*$ of a locally compact abelian group $G$ is the group of all characters on $G$ endowed with the compact-open topology. The group operation $*$ is defined by
	\begin{equation*}
	(\chi*\psi)(g):= \chi(g)+\psi(g), \quad \forall \chi,\psi \in G^*, \quad g \in G.
	\end{equation*}
\end{defn}

\begin{thm}
	The Pontryagin dual of a locally compact abelian group is indeed a locally compact abelian group.
\end{thm}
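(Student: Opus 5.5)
The plan is to check the three requirements directly: $G^*$ is an abelian group, it is Hausdorff, and it is locally compact in the compact-open topology. First, the group structure. The pointwise sum $\chi*\psi$ of two characters is again a continuous homomorphism $G\to\T$, because addition on $\T$ is continuous and commutative. The inverse of $\chi$ is $-\chi$, and the identity is the trivial character. So $G^*$ is an abelian group.

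Next, $G^*$ is a topological group. Subbasic open sets of the compact-open topology have the form $W(K,V)=\{\chi : \chi(K)\subseteq V\}$ with $K\subseteq G$ compact and $V\subseteq\T$ open. Because $\T$ is a compact metric group, this topology coincides with the topology of uniform convergence on compact subsets of $G$. Continuity of $(\chi,\psi)\mapsto \chi-\psi$ then follows from the triangle inequality for the invariant metric on $\T$. The Hausdorff property follows because points of $G$ are compact, so the topology is finer than pointwise convergence, and $\T$ is Hausdorff.

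The main step is local compactness. Since $G$ is locally compact, we fix a compact neighbourhood $K$ of $0\in G$ and consider
\[
N:=\{\chi\in G^* \; : \; \chi(K)\subseteq \Lambda\},
\]
where $\Lambda$ is the closed arc of $\T$ of length $\pi/2$ centred at $0$. This $N$ is a neighbourhood of the trivial character. We aim to show that $N$ is compact. On $N$, the compact-open topology agrees with the topology of pointwise convergence on $G$. This is an equicontinuity statement, and it uses the standard fact that a homomorphism into $\T$ which maps $K$ into a small arc is controlled on a smaller neighbourhood. Precisely, if $nx\in K$ for $n=1,\dots,m$, then $\chi(x)$ lies in an arc of length $O(1/m)$. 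This shows that $N$ is equicontinuous at $0$, and hence, by translation invariance, equicontinuous everywhere. Next, $N$ is closed in the compact space $\T^G$ with the product topology, by Tychonoff's theorem. The reason is that pointwise limits of homomorphisms are homomorphisms, and equicontinuity passes to the pointwise closure, so limits remain continuous. An Arzelà–Ascoli type argument then identifies the pointwise topology with the compact-open topology on the equicontinuous set $N$. Hence $N$ is compact in $G^*$.

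I expect the equicontinuity estimate to be the main obstacle, since it is the only place where the group structure is used in an essential way. I would handle it by choosing, for each $m$, a neighbourhood $U_m$ of $0$ with $U_m+\cdots+U_m$ ($m$ times) contained in $K$, which is possible by continuity of addition. If $\chi(jx)\in\Lambda$ for all $j\le m$, then the points $\chi(x),2\chi(x),\dots,m\chi(x)$ never leave an arc of length $\pi/2$. This forces $|\chi(x)|\le \pi/(2m)$. Alternatively, one may simply cite the classical result \cite{Pon,morris}, since this is standard Pontryagin theory.
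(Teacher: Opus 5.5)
Your argument is correct, and it takes a different route from the paper only in the sense that the paper gives no proof at all. The statement is quoted there as background from Pontryagin's theory, with a pointer to \cite{Pon} and \cite{morris}, which is exactly your fallback option.

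What you write is the standard textbook proof:
\begin{itemize}
\item the compact-open topology is identified with compact convergence into the metric group $\T$, which gives the topological group structure and the Hausdorff property;
\item the multiples estimate gives equicontinuity of $N=\{\chi:\chi(K)\subseteq\Lambda\}$;
\item $N$ is closed in the compact space $\T^G$;
\item pointwise and compact convergence coincide on equicontinuous sets.
\end{itemize}
Two small points are worth making explicit. First, the induction behind the estimate: if $\alpha\in\Lambda=[-\pi/4,\pi/4]$ and $|k\alpha|\le\pi/4$ as a real number, then $|(k+1)\alpha|\le\pi/2$. Since its class mod $2\pi$ lies in $\Lambda$, the real number itself lies in $\Lambda$. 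This actually gives $|\chi(x)|\le\pi/(4m)$, sharper than your $\pi/(2m)$. Second, the condition $f(K)\subseteq\Lambda$ survives pointwise closure only because $\Lambda$ is closed. You chose it closed, but you should say that this is why.

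Your route buys self-containment at the cost of some general topology. The paper's citation buys brevity, which is reasonable for a classical fact.

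The paper only ever dualizes discrete groups ($\Z^{(\N)}$, $\mathscr{M}_\om$, subgroups of $\Q$) and the compact groups $\mathcal{O}_\om$. For a discrete $G$ your proof collapses to a few lines. Compact subsets are finite, so the compact-open topology is the pointwise one. Every homomorphism is automatically continuous. Hence $G^*=\mathrm{Hom}(G,\T)$ is a closed subset of $\T^G$ and is compact by Tychonoff (this is Theorem \ref{discrcomp}). So the equicontinuity estimate, which you identify as the main obstacle, is not needed for anything the paper does.
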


\begin{thm}\label{discrcomp}
	The Pontryagin dual of a discrete abelian group is compact.
\end{thm}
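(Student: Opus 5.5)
The plan is to show directly that, for a discrete abelian group $G$, the compact-open topology on $G^*$ coincides with the topology of pointwise convergence. Then $G^*$ is a closed subspace of the product $\T^G$, and compactness follows from Tychonoff's theorem.

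\emph{Step 1 (identification of the topology).} Since $G$ is discrete, its compact subsets are exactly the finite subsets. Every map $G\to\T$ is continuous, so $G^*$ is simply the set of all group homomorphisms $G\to\T$. A subbasic open set of the compact-open topology has the form $\{\chi : \chi(K)\subseteq V\}$ with $K\subseteq G$ finite and $V\subseteq \T$ open. Such a set equals $\bigcap_{g\in K}\{\chi : \chi(g)\in V\}$, which is a finite intersection of open cylinders of $\T^G$ restricted to $G^*$. Conversely, each cylinder $\{\chi:\chi(g)\in V\}$ is a compact-open subbasic set, with $K=\{g\}$. Hence the inclusion $G^*\hookrightarrow \T^G=\prod_{g\in G}\T$ is a homeomorphism onto its image, where $\T^G$ carries the product topology.

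\emph{Step 2 (closedness).} The image of $G^*$ in $\T^G$ is
\begin{equation*}
\bigcap_{g,h\in G}\{x\in\T^G \; : \; x_{g+h}-x_g-x_h=0\}.
\end{equation*}
Each set in this intersection is the preimage of the closed point $\{0\}\subseteq\T$ under the continuous map $x\mapsto x_{g+h}-x_g-x_h$. This map is continuous because coordinate projections are continuous and $\T$ is a topological group. An intersection of closed sets is closed, so $G^*$ is closed in $\T^G$.

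\emph{Step 3 (conclusion).} By Tychonoff's theorem, $\T^G$ is compact for an arbitrary index set $G$; here $G$ will in fact be countable. A closed subset of a compact space is compact, so $G^*$ is compact.

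The only point requiring care is Step 1, namely that compact subsets of a discrete space are finite. This holds because the singletons form an open cover of any subset, and a compact subset must be covered by finitely many of them.
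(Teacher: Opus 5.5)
Your proof is correct and complete. The paper does not prove this statement: it lists it among the classical facts of Pontryagin's theory and quotes it from the literature, so there is no internal proof to compare against.

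What you wrote is the standard textbook argument. Compact subsets of a discrete group are finite, so the compact-open topology on $G^*$ is the topology of pointwise convergence. $G^*$ is then the closed subset of $\T^G$ cut out by the additivity equations, and Tychonoff's theorem finishes.

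Spelling out Step~1 also justifies an identification the paper uses without comment in Example~\ref{dualZN} and Proposition~\ref{D0}. For $G=\Z^{(J)}$, pointwise convergence of characters is equivalent to convergence of their values on the basis vectors $e_j$. Hence $(\Z^{(J)})^*$ is homeomorphic to $\T^J$ with exactly the product topology the paper works in.
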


\begin{ex}\label{extor}
The Pontryagin dual of $\Z$ is the torus $\T$, with the dual coupling given by
	\begin{equation}
	\theta:\Z \ni n \mapsto n\Theta + 2\pi\Z \in \T.
	\end{equation}
\end{ex}

\begin{thm}\label{dirsumdual}
	Let $J$ be a countable set of indices and consider $(G_j)_{j \in J}$ a family of discrete abelian groups. Then, the direct sum $\bigoplus_{j \in J}G_{j}$ is a discrete abelian group when considered with the product topology, and its Pontryagin dual is the (compact) abelian group $\prod_{j \in J}(G_j)^*$ endowed with the product topology.
\end{thm}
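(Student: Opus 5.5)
The plan is to exhibit an explicit isomorphism of topological groups
\[
\Psi:\Big(\bigoplus_{j\in J}G_j\Big)^* \to \prod_{j\in J}G_j^*,\qquad \Psi(\chi):=(\chi\circ\iota_j)_{j\in J},
\]
where $\iota_j:G_j\to\bigoplus_{i\in J}G_i$ is the canonical inclusion. Before that, I would dispose of discreteness. Every singleton $\{g\}$ of $\bigoplus_j G_j$ is open, because $g$ has finite support $F$, so $\{g\}=\bigoplus_j G_j\cap\big(\prod_{j\in F}\{g_j\}\times\prod_{j\notin F}G_j\big)$ is the intersection with an open cylinder. Hence, although $\prod_j G_j$ itself is not discrete, the subspace topology on the direct sum is discrete. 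Compactness of the dual will then follow from Theorem \ref{discrcomp}, or directly from Tychonoff once $\Psi$ is shown to be a homeomorphism.

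The algebraic part comes next. Each $\chi\circ\iota_j$ is a homomorphism $G_j\to\T$ and is automatically continuous because $G_j$ is discrete, so $\Psi$ is a well defined homomorphism. Injectivity holds because every element of the direct sum is a finite sum $\sum_{j\in F}\iota_j(g_j)$, so $\chi$ is determined by its restrictions. For surjectivity, given $(\chi_j)_j\in\prod_j G_j^*$, I set $\chi(g):=\sum_{j}\chi_j(g_j)$. This is a finite sum in $\T$, it is clearly a homomorphism, and it is continuous by discreteness. Its inverse is therefore $\Psi^{-1}((\chi_j)_j)=\chi$.

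The remaining and main step is topological: the compact-open topology on a dual of a discrete group must coincide with the product topology. For a discrete group $G$, compact sets are finite, so the compact-open topology on $G^*$ is the topology of pointwise convergence, i.e. the subspace topology of $\T^{G}$. Applying this to both sides:
\begin{itemize}
\item Continuity of $\Psi$ reduces to checking that, for each $j$ and $g_j\in G_j$, the map $\chi\mapsto\chi(\iota_j(g_j))$ is continuous. It is, being an evaluation at a point.
\item Continuity of $\Psi^{-1}$ reduces to checking that, for each $g$ in the direct sum, the map $(\chi_j)\mapsto\sum_{j\in F}\chi_j(g_j)$ is continuous. It is a finite sum of evaluations composed with coordinate projections.
\end{itemize}
Alternatively, $\Psi$ is a continuous bijection from a compact space onto a Hausdorff space, hence a homeomorphism. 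The one point needing care is the identification of compact-open with pointwise topology. Here the countability of $J$ is not essential, but it guarantees metrisability, which is used elsewhere in the paper.
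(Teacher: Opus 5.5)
Your discreteness step is false, and the rest of the argument depends on it. The set $\bigoplus_{j\in J}G_j\cap\bigl(\prod_{j\in F}\{g_j\}\times\prod_{j\notin F}G_j\bigr)$ is not $\{g\}$. The cylinder constrains only the coordinates in $F$, so it contains $g+\iota_k(x)$ for every $k\notin F$ and every $x\in G_k$. Concretely, in $\Z^{(\N)}\subset\Z^{\N}$ the basis vectors $e_n$ converge to $0$ coordinatewise, so $\{0\}$ is not open. Whenever infinitely many $G_j$ are non-trivial, the topology induced by the product is not discrete (indeed not even locally compact).

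This breaks your surjectivity step, where $\chi(g)=\sum_j\chi_j(g_j)$ is continuous ``by discreteness''. It also breaks your appeal to Theorem~\ref{discrcomp}. Moreover, no argument can rescue the literal statement. Suppose $\chi$ is continuous for the product topology, and take a neighbourhood $V$ of $0$ in $\T$ containing no non-trivial subgroup. Continuity yields a finite $F$ with $\chi(\{h\in\bigoplus_j G_j : h_j=0\ \forall j\in F\})\subset V$. That set is a subgroup, so $\chi$ vanishes on it, i.e.\ $\chi\circ\iota_j=0$ for $j\notin F$. Hence $\Psi$ maps the continuous characters exactly onto $\bigoplus_j G_j^*$, not onto $\prod_j G_j^*$. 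For instance, $G_j=\Z$ and $\chi_j(n)=n\pi+2\pi\Z$ give $\chi(e_k)=\pi+2\pi\Z$ for all $k$, although $e_k\to 0$.

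The phrase ``when considered with the product topology'' is a slip in the statement. The paper gives no proof; it quotes the result from Pontryagin's theory. In its applications (Example~\ref{dualZN}, Proposition~\ref{D0}) it tacitly gives $\Z^{(\N)}$ and $\mathscr{M}_\om$ the discrete topology. You should have flagged this and replaced the hypothesis by ``the direct sum carries the discrete topology''.

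With that correction, everything else you wrote is correct and is the standard proof. This covers the restriction map $\Psi$, injectivity via finite supports, surjectivity via finite sums, and the identification of the compact-open topology on the dual of a discrete group with pointwise convergence. It also covers continuity of $\Psi$ and $\Psi^{-1}$ through evaluations, or alternatively the compact-to-Hausdorff argument.

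One small correction to your closing remark: countability of $J$ alone does not make $\prod_j G_j^*$ metrisable. You also need each $G_j$ to be countable.
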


\begin{ex}\label{dualZN}
	Let $J$ be a countable set of indices. By Example \ref{extor} and Theorem \ref{dirsumdual}, the Pontryagin dual of $\bigoplus_{j\in J}\Z$ is the torus $\prod_{j\in J}\T$, with  dual coupling is given by
	\begin{equation}
	\theta:\bigoplus_{j\in J}\Z \ni \nu \mapsto \sum_{j\in J}\nu_j\Theta_j + 2\pi\Z \in \T.
	\end{equation}
\end{ex} 

\begin{defn}[Evaluation Character/Map]
	Given an element $g$ of a locally compact abelian group $G$, the associated evaluation character $\mathrm{ev}_{G}(g)$ is the character on $G^*$ defined by
	\begin{equation}
	\mathrm{ev}_{G}(g)(\chi):=\chi(g), \quad \forall \chi \in G^*.
	\end{equation}
	The map $\mathrm{ev}_{G} : G \to G^{**}$ is called eval\-ua\-tion map.
\end{defn}

\begin{thm}[Pontryagin Duality]\label{pontryaginthm}
	For any locally compact abelian group, the evaluation map is an isomorphism of topological groups.
\end{thm}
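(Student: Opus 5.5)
We follow the classical route of \cite{Pon} and \cite{morris}: first the special cases that actually occur in this paper (discrete groups and compact groups), then the general locally compact case via structure theory. For every locally compact abelian group $G$ we first check the easy facts. The map $\mathrm{ev}_G$ is a group homomorphism, since $\mathrm{ev}_G(g+h)(\chi)=\chi(g)+\chi(h)$. Each $\mathrm{ev}_G(g)$ is continuous on $G^*$, because evaluation at a point is continuous for the compact-open topology. Finally, $\mathrm{ev}_G$ is continuous: local compactness of $G$ gives a compact neighbourhood $K$ of $g$, and on $K$ the characters from a compact subset of $G^*$ are equicontinuous. What remains is to prove that $\mathrm{ev}_G$ is injective, surjective, and open.

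\emph{Injectivity} amounts to saying that characters separate points. For $G$ discrete this is algebraic: a nonzero $g$ generates a cyclic subgroup carrying a nontrivial character. Since $\T$ is divisible, hence injective as a $\Z$-module, that character extends to $G$. For $G$ compact, we use the Haar measure (constructed as in Section~\ref{ue}) and the Peter--Weyl theorem. The irreducible unitary representations of an abelian group are one-dimensional, so characters are dense in $L^2(G)$ and therefore separate points. The general case uses the same $L^2$ argument: a Gelfand--Raikov type theorem, or Plancherel on $L^1(G)$ with the Gelfand transform.

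\emph{Surjectivity and openness.} For $G$ discrete, $G^*$ is compact by Theorem~\ref{discrcomp}. Every character of $G^*$ is then a continuous function on a compact group. The characters of the form $\mathrm{ev}_G(g)$ form a point-separating, conjugation-closed algebra, which is dense in $\mathscr C^0(G^*)$ by Stone--Weierstrass. Any character outside this set would be orthogonal in $L^2(G^*,\text{Haar})$ to all of them, hence zero, a contradiction. So $\mathrm{ev}_G$ is a bijection between discrete groups, and therefore a topological isomorphism. The compact case is symmetric: $G^*$ is discrete, and the same orthogonality argument applied on $G$ shows that $\mathrm{ev}_G$ is a continuous bijection from a compact space to a Hausdorff space, hence a homeomorphism. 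The torus $\T^\N$ and its closed subgroups, which are the only cases used later, already fall under these two cases. This is consistent with Examples~\ref{extor} and~\ref{dualZN}.

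\emph{General case, and the expected main obstacle.} We reduce to the special cases with the structure theorem: every locally compact abelian group has an open compactly generated subgroup $H$, and $H\cong\R^n\times\Z^m\times K$ with $K$ compact. Duality for $\R$ and $\Z$ is explicit, and duality is compatible with finite products, as in Theorem~\ref{dirsumdual}. We then pass from $H$ to $G$ through the exact sequence $0\to H\to G\to G/H\to 0$, with $G/H$ discrete. Two facts are needed here: restriction $G^*\to H^*$ is surjective and open, and $(G/H)^*$ is the annihilator of $H$ in $G^*$. A five-lemma argument for topological groups then gives the result. This extension step is the delicate part, because openness of maps under exact sequences must be controlled. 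We would handle it with the annihilator duality $H^{\perp\perp}=H$, proved from point separation together with the Haar-measure arguments above. Since the paper only needs the discrete and compact cases, the full extension may be cited from \cite{morris}.
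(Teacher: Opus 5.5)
The paper does not prove this statement; it quotes it as a classical theorem from \cite{Pon} and \cite{morris}. Your outline follows the standard route in those sources: discrete and compact cases first, then the general case through an open compactly generated subgroup and the structure theorem, with the extension step deferred to \cite{morris}. Your discrete case is essentially complete. You are also right that the paper only uses duality for countable discrete groups and for closed subgroups of $\T^\N$.

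The step that does not go through as written is surjectivity in the compact case, which you call ``symmetric''. In the discrete case the unknown object is a continuous character of the compact group $G^*$. It is a function on a space with a Haar probability measure, so it can be orthogonal to the $\mathrm{ev}_G(g)$. For compact $G$ the unknown object is a point $\Phi\in G^{**}$, i.e.\ a homomorphism on the discrete, generally infinite, group $G^*$. It is not a function on $G$, and $G^*$ has no invariant probability measure, so the argument does not transpose.

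A correct replacement in the same spirit goes as follows. Put $H:=\mathrm{ev}_G(G)$, a compact subgroup of the compact group $G^{**}$, and suppose $\Phi\notin H$. Pick a continuous $f\geq 0$ on $G^{**}$ with $f(\Phi)>0$ and $f|_H=0$, and set $f_H(x):=\int_G f(x*\mathrm{ev}_G(g))\,dg$ with Haar measure on $G$. Then $f_H=0$ on $H$ and $f_H(\Phi)>0$. On the other hand, by Stone--Weierstrass on $G^{**}$, $f$ is a uniform limit of linear combinations of the characters $\Psi\mapsto e^{\im\Psi(\chi)}$, $\chi\in G^*$, which trivially separate the points of $G^{**}$. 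Averaging over $G$ is a sup-norm contraction, and it annihilates each such character with $\chi\neq 0$ because $\int_G e^{\im\chi(g)}\,dg=0$. So $f_H$ is a uniform limit of constants, hence constant, a contradiction.

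Equivalently, Peter--Weyl on $G^{**}/H$ produces a nontrivial character of $G^{**}$ vanishing on $H$. But the discrete case applied to $G^*$ shows that every character of $G^{**}$ has the form $\Psi\mapsto\Psi(\chi)$, and such a character vanishes on $H$ only if $\chi=0$.

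One smaller point: the Haar measure of Section~\ref{ue} is built from trigonometric polynomials on $\T^\N$. It does not give Haar measure on an arbitrary compact abelian group such as $G^*$ or $G$ here, so you need the general existence theorem.
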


\begin{defn}[Annihilator]
	Let $G$ be a locally compact abelian group and $H$ a closed subgroup. The annihilator of $H$ is the (locally compact) subgroup of $G^*$ defined by
	\begin{equation}\label{defann}
	Ann(H):=\{ \chi \in G^* \; : \; \chi(g)=0, \; \forall g \in H \}.
	\end{equation}
\end{defn}

\begin{thm}\label{annann}
	Let $G$ be a locally compact abelian group and $H$ a closed subgroup. Then, $Ann(Ann(H))\cong H$.
\end{thm}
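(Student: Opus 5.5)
The statement to prove is Theorem \ref{annann}: for a locally compact abelian group $G$ and a closed subgroup $H$, $Ann(Ann(H))\cong H$. Here $Ann(H)\subseteq G^*$ and $Ann(Ann(H))\subseteq G^{**}$. The claim means precisely that $\mathrm{ev}_G$ maps $H$ onto $Ann(Ann(H))$. Since Theorem \ref{pontryaginthm} already tells us that $\mathrm{ev}_G:G\to G^{**}$ is an isomorphism of topological groups, it suffices to prove the set equality $\mathrm{ev}_G(H)=Ann(Ann(H))$. The restriction of $\mathrm{ev}_G$ to $H$ is then automatically a topological isomorphism onto its image.

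The inclusion $\mathrm{ev}_G(H)\subseteq Ann(Ann(H))$ is immediate. For $h\in H$ and $\chi\in Ann(H)$ we have $\mathrm{ev}_G(h)(\chi)=\chi(h)=0$.

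For the reverse inclusion, take $g\in G$ with $\chi(g)=0$ for every $\chi\in Ann(H)$, and suppose $g\notin H$. We look for a character of $G$ vanishing on $H$ but not at $g$, which is a contradiction.

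1. Pass to the quotient $G/H$. Because $H$ is closed, $G/H$ is a Hausdorff locally compact abelian group.
2. Apply Theorem \ref{pontryaginthm} to $G/H$. Injectivity of $\mathrm{ev}_{G/H}$ means the characters of $G/H$ separate points. Since $g+H\neq 0$, there is a character $\psi$ of $G/H$ with $\psi(g+H)\neq 0$.
3. Set $\chi:=\psi\circ p$, where $p:G\to G/H$ is the continuous quotient map. Then $\chi$ is a continuous homomorphism, so $\chi\in G^*$. It vanishes on $H$, so $\chi\in Ann(H)$, and $\chi(g)\neq 0$. This contradicts the choice of $g$.

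Hence $\mathrm{ev}_G(H)=Ann(Ann(H))$, and $\mathrm{ev}_G|_H$ is the required isomorphism.

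The main obstacle is step 1, namely checking that $G/H$ is again a locally compact Hausdorff abelian group. Closedness of $H$ is exactly what gives the Hausdorff property. Local compactness holds because $p$ is open and images of compact neighbourhoods are compact. This is standard, and it is exactly where the hypothesis that $H$ is closed is used. Everything else reduces to Pontryagin duality in the form of point separation by characters.
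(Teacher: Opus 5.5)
Your argument is correct. The paper gives no proof of this statement. It lists it among the classical facts of Pontryagin's theory and refers to \cite{Pon} and \cite{morris}, so your route differs from the paper only in that it replaces a citation with a derivation.

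What you show is that, given the paper's ordering, Theorem~\ref{annann} is not an independent input. It follows from three ingredients:
\begin{itemize}
\item Theorem~\ref{pontryaginthm} applied to $G$, using bijectivity and bicontinuity of $\mathrm{ev}_G$;
\item Theorem~\ref{pontryaginthm} applied to $G/H$, using only injectivity of $\mathrm{ev}_{G/H}$, i.e.\ separation of points by characters;
\item the fact that $G/H$ is again locally compact Hausdorff.
\end{itemize}
Your formulation also makes precise what $\cong$ means here: $\mathrm{ev}_G$ restricts to a topological isomorphism $H\to Ann(Ann(H))$. This is exactly the identification of $G^{**}$ with $G$ that is used implicitly in Proposition~\ref{D0}.

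One step should be stated explicitly. In the reverse inclusion you start from $g\in G$, but an element of $Ann(Ann(H))$ lives in $G^{**}$. You need to say that it equals $\mathrm{ev}_G(g)$ for some $g\in G$, by surjectivity of $\mathrm{ev}_G$. This is the only place where the full strength of duality for $G$, as opposed to mere point separation, enters.

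A minor point: $Ann(H)$ is closed in $G^*$, because each map $\chi\mapsto\chi(h)$ is continuous for the compact-open topology. Hence $Ann(Ann(H))$ is defined in the sense of the paper's definition, which requires a closed subgroup.

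In some expositions the property $H=\{g\in G:\chi(g)=0\ \forall\chi\in Ann(H)\}$ is proved from point separation alone and then used on the way to the duality theorem. Relative to such a development your derivation would be circular. Relative to the paper, where Theorem~\ref{pontryaginthm} is taken as given, it is legitimate.
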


\begin{prop}
	Let $G$ be a locally compact abelian group and $H$ a closed subgroup. Then, $G/H$ is a locally compact abelian group.
\end{prop}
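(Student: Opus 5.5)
The plan is to show that the quotient is Hausdorff and locally compact. Abelianness and the continuity of the group operations are inherited automatically by the quotient.

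\emph{Step 1 (topological group and openness).} Let $\pi:G\to G/H$ be the canonical projection, and give $G/H$ the quotient topology. The first observation is that $\pi$ is open. If $U\subseteq G$ is open, then
\[
\pi^{-1}(\pi(U))=U+H=\bigcup_{h\in H}(U+h),
\]
which is a union of translates of $U$ and hence open. Since $G$ is abelian, $H$ is normal. Continuity of $(x+H,y+H)\mapsto x-y+H$ then follows from the continuity of $(x,y)\mapsto x-y$ on $G\times G$. For this we use that $\pi\times\pi$ is open and surjective, hence a quotient map. So $G/H$ is an abelian topological group.

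\emph{Step 2 (Hausdorff).} Because $H$ is closed, the singleton $\{0+H\}=\pi(H)$ has open complement $\pi(G\setminus H)$. Here we use $\pi^{-1}(\pi(G\setminus H))=G\setminus H$, which holds since $G\setminus H$ is a union of cosets. A topological group in which points are closed is Hausdorff. To see this directly, suppose $x\notin H$ and pick a symmetric neighbourhood $V$ of $0$ with
\[
(x+V+V)\cap H=\emptyset,
\]
using the continuity of $(a,b)\mapsto x+a+b$. Then $\pi(x+V)$ and $\pi(V)$ are disjoint open sets separating $x+H$ from $0+H$. Translating this gives separation of any two distinct points.

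\emph{Step 3 (local compactness).} Let $K$ be a compact neighbourhood of $0$ in $G$, containing an open set $W\ni 0$. Then $\pi(K)$ is compact by continuity, and it contains the open set $\pi(W)$ by Step 1. So $0+H$ has a compact neighbourhood. Translation by $x+H$ is a homeomorphism of $G/H$, so every point has a compact neighbourhood.

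The only delicate point is Step 2. It is where closedness of $H$ is essential: without it, $G/H$ fails to be Hausdorff. The argument is standard, so I expect no real obstacle; the proposition may simply be cited from \cite{morris}.
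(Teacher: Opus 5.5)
Your proof is correct and complete. Openness of $\pi$ (from $\pi^{-1}(\pi(U))=U+H$), Hausdorffness from the closedness of $H$ via the symmetric neighbourhood $V$ (take $V$ open so that $\pi(x+V)$ and $\pi(V)$ are open), and local compactness from $\pi(W)\subseteq\pi(K)$ are the standard steps. The paper gives no proof of this proposition and only lists it as a classical fact of Pontryagin's theory, citing \cite{Pon} and \cite{morris}, so you have supplied the standard argument those sources contain.
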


\begin{thm}\label{dualquoz}
	Let $G$ be a locally compact abelian group and $H$ a closed subgroup. Then, $(G/H)^*\cong Ann(H)$.
\end{thm}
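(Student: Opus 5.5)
We would take the natural candidate isomorphism induced by the quotient map and check, in turn, that it is algebraic, bijective, and bicontinuous. Let $p:G\to G/H$ denote the canonical projection, which is a continuous, open, surjective homomorphism. Define
\begin{equation*}
\Psi:(G/H)^*\to Ann(H), \qquad \Psi(\chi):=\chi\circ p .
\end{equation*}
Since $\chi\circ p$ is a continuous homomorphism $G\to\T$ vanishing on $H=\ker p$, it lies in $Ann(H)$. Moreover $\Psi(\chi*\psi)=\Psi(\chi)*\Psi(\psi)$ by Definition \ref{pd}. Injectivity is immediate from the surjectivity of $p$: if $\chi\circ p=0$, then $\chi=0$.

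For surjectivity, take $\eta\in Ann(H)$. Since $\eta$ vanishes on $H$, it is constant on cosets $g+H$, so it induces a well-defined homomorphism $\bar\eta:G/H\to\T$ with $\bar\eta\circ p=\eta$. This $\bar\eta$ is continuous by the universal property of the quotient topology: for $U\subseteq\T$ open, $p^{-1}(\bar\eta^{-1}(U))=\eta^{-1}(U)$ is open, hence $\bar\eta^{-1}(U)$ is open in $G/H$. Thus $\bar\eta\in(G/H)^*$ and $\Psi(\bar\eta)=\eta$, so $\Psi$ is a group isomorphism.

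It remains to compare the compact-open topologies; the subspace topology on $Ann(H)\subseteq G^*$ is the compact-open one. We will use sub-basic neighbourhoods of the zero character of the form $N(K,V)=\{\chi:\chi(K)\subseteq V\}$, with $K$ compact and $V$ a neighbourhood of $0$ in $\T$; since both duals are topological groups, it suffices to check continuity at $0$.
\begin{itemize}
\item \emph{Continuity of $\Psi$.} Given a compact $K\subseteq G$, the set $p(K)$ is compact in $G/H$. We have $\Psi(N(p(K),V))\subseteq N(K,V)\cap Ann(H)$.
\item \emph{Continuity of $\Psi^{-1}$.} Given a compact $L\subseteq G/H$, we need a compact $K\subseteq G$ with $p(K)\supseteq L$. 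Then $\Psi^{-1}(N(K,V)\cap Ann(H))\subseteq N(L,V)$.
\end{itemize}
This lifting of compact sets is the main obstacle, and it is where local compactness and the openness of $p$ are used. Choose, for every $x\in G$, a relatively compact open neighbourhood $W_x$ of $x$. The sets $p(W_x)$ are open, because $p$ is open, and they cover $G/H$. By compactness, finitely many of them, say $p(W_{x_1}),\dots,p(W_{x_n})$, cover $L$. Set
\begin{equation*}
K:=\Big(\bigcup_{i=1}^{n}\overline{W_{x_i}}\Big)\cap p^{-1}(L).
\end{equation*}
Here $p^{-1}(L)$ is closed because $L$ is closed in the Hausdorff space $G/H$, which is Hausdorff since $H$ is closed. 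Hence $K$ is compact, and by construction $p(K)=L$.

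Therefore $\Psi$ is a homeomorphism, which yields $(G/H)^*\cong Ann(H)$.
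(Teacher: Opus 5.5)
Your proof is correct. The paper gives no proof of this statement: it quotes it, together with the other duality facts in that paragraph, as a classical result of Pontryagin's theory, with a pointer to \cite{Pon} and \cite{morris}. So the only comparison is with the standard literature proof, and yours is essentially that proof.

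The map $\chi\mapsto\chi\circ p$ is an algebraic isomorphism onto $Ann(H)$ for purely formal reasons, namely the universal property of the quotient. Local compactness of $G$ enters only through the lemma that every compact $L\subseteq G/H$ is the image $p(K)$ of a compact $K\subseteq G$. You prove this cleanly using relatively compact neighbourhoods, openness of $p$, and closedness of $p^{-1}(L)$; the last point is where closedness of $H$, via Hausdorffness of $G/H$, is used. Your reduction to neighbourhoods of the zero character is also fine. In fact the sets $N(K,V)$ form a neighbourhood base at $0$, since $N(K_1,V_1)\cap N(K_2,V_2)\supseteq N(K_1\cup K_2,V_1\cap V_2)$, so calling them sub-basic undersells them.

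What your argument adds over the citation is that it isolates exactly where the hypotheses are needed. This is relevant to the paper's only use of the theorem, Proposition \ref{D0}, where $G=\Z^{(\N)}$ is discrete and $H=\mathscr{R}_\om$. There $G/H$ is discrete, its compact subsets are finite, and your lifting lemma is trivial. So the statement the paper actually relies on reduces to the elementary algebraic part of your proof.
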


\paraga With the above classical results in the theory of locally compact abelian groups, we can now proceed with exploring their implications in our context.

\begin{lemma}\label{OM}
	Let $\om:=(\om_j)_{j\in \N}$, be a list of real numbers and $\mathcal{O}_\om$ the closure of the orbit through the origin induced by the Kronecker flow $(\T^\N,\Phi_\om)$. Then, the annihilator of $\mathcal{O}_\om$ is the resonance module 
	\begin{equation}
	\mathscr{R}_\om:=\bigg\{\nu \in \Z^{(\N)} \; : \; \sum_{j\in \N} \nu_j\om_j=0 \bigg\}.
	\end{equation}
	\begin{proof}
		Let $\nu \in Ann(\mathscr{O}_\om)$. Then, $t\sum_{j \in \N}\om_j\nu_j\in 2\pi\Z$ for all $t \in \R$. This implies that $\nu \in \mathscr{R}_\om$. Now, assume that $\nu \in \mathscr{R}_\om$. For any $\theta \in \mathcal{O}_\om$ there exists a sequence $\{t_k\}_{k\in \N}$ such that, for all $j\in \N$, $\Theta_j+2\pi\Z=\lim_{k \to \infty}t_k\om_j+2\pi\Z$. Then, 
		\begin{equation*}
		\sum_{j\in \N}\Theta_j\nu_j+2\pi\Z=\sum_{j\in \N}\lim_{k \to \infty}t_k\om_j\nu_j+2\pi\Z=0+2\pi\Z=0,	
		\end{equation*}
		which ends the proof.
	\end{proof}
\end{lemma}

\begin{prop}\label{D0}
	Given a Kronecker flow $(\T^\N,\Phi_\om)$ on the infinite torus, the closure of the orbit through the origin is isomorphic as a topological group to $\mathscr{M}_\om^*$, where the frequency module $\mathscr{M}_\om$ is endowed with the discrete topology.
	\begin{proof}
		First observe that, by the first isomorphism theorem (see Remark \ref{firstisothm}), $\mathscr{M}_\om$ is isomorphic to $\Z^{(\N)}/\mathscr{R}_\om$. Then,
		\begin{equation}
		\mathscr{M}_\om^*\cong\bigg(\Z^{(\N)}/\mathscr{R}_\om\bigg)^* \cong Ann(\mathscr{R}_\om) \cong Ann(Ann(\mathcal{O}_\om))\cong \mathcal{O}_\om.
		\end{equation}
The last isomorphisms follow from Theorem \ref{dualquoz}, Lemma \ref{OM}, and Theorem~\ref{annann}, respectively.
	\end{proof}
\end{prop}

\begin{prop}\label{mainprel}
	Given a Kronecker flow $\Phi_\om$ on the infinite torus, the closure of any orbit is homeomorphic to $\mathscr{M}_\om^*$.
	\begin{proof}
 The claim follows by Remark \ref{orb-Oom} and Proposition \ref{D0}.
	\end{proof}
\end{prop}

\paraga As a consequence of a characterization of all the subgroups of the additive rationals (see \cite{beau}), we have the following lemma, which is proved in Appendix \ref{modsolA}.
\begin{lemma}\label{modsol}
	Any subgroup of $(\Q,+)$ is either free or isomorphic to
	\begin{equation}
	Q(\textnormal{\textit{\textbf{a}}}):=\bigg\{ \dfrac{m}{\prod_{j=1}^Na_j} \; : \; m\in\Z \ \text{and} \ N \in \N\bigg\},
	\end{equation}
	for some sequence $\textnormal{\textit{\textbf{a}}}\in \Sigma$.
\end{lemma}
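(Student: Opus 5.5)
The plan is to prove Lemma \ref{modsol} by an elementary analysis of denominators. After rescaling, the group can be assumed to contain $1$, and it is then determined by how far each prime is allowed to divide its denominators.

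\emph{Normalisation.} Let $G\subseteq(\Q,+)$ be a subgroup. If $G=\{0\}$ it is free (with empty basis), so assume $G\neq\{0\}$ and fix $g_0\in G\setminus\{0\}$. Multiplication by $1/g_0$ is an isomorphism of $(\Q,+)$ onto itself, so we may assume $1\in G$. Then $\Z\subseteq G$.

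\emph{Key structural fact.} If $m/n\in G$ with $\gcd(m,n)=1$, then $1/n\in G$. This follows from Bézout: choose $u,v\in\Z$ with $um+vn=1$, so that $1/n=u(m/n)+v\cdot 1\in G$. Moreover, if $1/n\in G$ and $d\mid n$, then $1/d=(n/d)(1/n)\in G$. Hence $G$ is completely determined by the set $D:=\{n\in\N : 1/n\in G\}$. This set is closed under divisors, and it is also closed under least common multiples: if $1/n,1/n'\in G$ then $1/\operatorname{lcm}(n,n')\in G$ by the same Bézout argument applied to $\gcd(n/g,n'/g)=1$ with $g=\gcd(n,n')$. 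Also $G=\{m/n : m\in\Z,\ n\in D\}$.

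\emph{Case split.} If $D$ is finite, let $N:=\max D$. Closure under lcm shows that every element of $D$ divides $N$, so $G=\frac1N\Z\cong\Z$, which is free.

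If $D$ is infinite, enumerate $D=\{d_1,d_2,\dots\}$ and set $L_k:=\operatorname{lcm}(d_1,\dots,d_k)\in D$. Then $L_k\mid L_{k+1}$, the sequence $(L_k)$ is unbounded, and every $n\in D$ divides some $L_k$. Pass to a strictly increasing subsequence $L_{k_1}\mid L_{k_2}\mid\cdots$ with $L_{k_1}>1$, and define
\[
a_1:=1,\qquad a_2:=L_{k_1},\qquad a_{j+1}:=L_{k_j}/L_{k_{j-1}}\ \ (j\ge 2).
\]
Each $a_j$ with $j>1$ is an integer larger than $1$, so $\textbf{a}\in\Sigma$, and $\prod_{j\le N}a_j=L_{k_{N-1}}$. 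Consequently
\[
G=\bigcup_{k}\tfrac{1}{L_k}\Z=Q(\textbf{a}).
\]

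\emph{Where the care is needed.} The main obstacle is the lcm closure together with the identity $G=\bigcup_k L_k^{-1}\Z$. Both reduce to the Bézout trick above, and this is the step I would write out carefully.

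Finally, to match the dichotomy in the statement, one may observe that in the infinite case $G$ is not free. A rank-one free group would be cyclic, and no cyclic subgroup has unbounded denominators. This is not needed for the "either/or" as stated, however.
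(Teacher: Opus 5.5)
Your proof is correct, and it takes a genuinely different route from the paper. The paper does not analyse $G$ directly. It invokes the Baer--Beaumont classification (Theorem~\ref{b}), which writes every nontrivial subgroup of $\Q$ as $S(i,\Lambda)$ with a prime-exponent profile $\Lambda\in\prod_j(\N_0\cup\{\infty\})$. The cyclic case is then $\prod_j p_j^{\Lambda_j}<\infty$. Otherwise the paper lists all admissible prime powers $p_\ell^{\lambda}$, $1\le\lambda\le\Lambda_\ell$, in increasing order, takes $a_j$ to be the corresponding primes, and matches fractions by hand.

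You instead prove the needed structure from scratch. After rescaling so that $1\in G$, B\'ezout shows that $G$ is determined by the divisor-closed and $\operatorname{lcm}$-closed set $D$ of admissible denominators. A cofinal, strictly increasing $\operatorname{lcm}$-chain $L_{k_1}\mid L_{k_2}\mid\cdots$ then produces $\mathbf{a}$ as successive quotients, and you get the literal equality $g_0^{-1}G=Q(\mathbf{a})$.

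What your approach buys:
\begin{itemize}
\item it is self-contained, with no external classification theorem;
\item the isomorphism is explicit, namely multiplication by $1/g_0$;
\item the finite/infinite dichotomy for $D$ matches free/non-free at once.
\end{itemize}
What the paper's approach buys is a canonical $\mathbf{a}$ made of primes, read off from $\Lambda$. This ties the lemma directly to the isomorphism invariants that reappear in the classification discussion of Section~\ref{stocazzo}.

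One cosmetic fix: your formula $\prod_{j\le N}a_j=L_{k_{N-1}}$ needs the convention $L_{k_0}:=1$ for $N=1$. This is harmless, since $\Z\subseteq L_{k_1}^{-1}\Z$.
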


\begin{rmk}\label{dualsol}
	Given $\textbf{a}\in\Sigma$, consider the non-free subgroup $Q(\textbf{a})$ of the additive rational. Then, there exists a Kronecker flow whose frequency module is isomorphic to the given subgroup. Indeed, if $\om=\om(\textbf{a})$, where
	\begin{equation}\label{omegaa}
	\om(\textbf{a})=(\om_j(\textbf{a}))_{j \in \N}, \quad \text{with} \quad \om_{j}(\textbf{a})=\prod_{k=1}^{j}a_{k}^{-1} \quad \text{for all} \ j\in\N,
	\end{equation}
	then $\mathscr{M}_\om=Q(\textbf{a})$.
	
	A priori, there may be other possible choices of the frequency vector $\om$ that give the same frequency module. Nevertheless, Proposition \ref{mainprel} ensures that the topological structure of the closure of the orbits of any Kronecker flow is uniquely determined by the frequency module. Finally, Theorem \ref{thmsol} implies that the Pontryagin dual of $Q(\textbf{a})$ is the solenoid $\sigma_{\textbf{a}}$.
\end{rmk}

\begin{rmk}\label{rmksakvol}
	 The key idea of the construction of Sakbaev and Volovich \cite{SakVol}, consists in considering orbits that are (uniformly in time) approximated by periodic ones, whose period diverges as the approximation becomes more accurate. In particular, they provide two examples, namely, $\om_{j}=1/j!, \ j \in \N$, and $\om'_j=\textbf{p}_{2j}/\textbf{p}_{2j-1}, \ j\in \N$, where $(\textbf{p}_j)_{j\in \N}$ is the list of all prime numbers in ascending order. Note that the closures of the orbits induced by $\Phi_\om$ and $\Phi_{\om'}$ are homeomorphic to the solenoids $\sigma_{\textbf{a}}$, with $\textbf{a}=(1,2,3,4,...)$, and $\sigma_{\textbf{a}'}$, with $\textbf{a}'=(1,\textbf{p}_1,\textbf{p}_3,\textbf{p}_5,...)$, respectively. Indeed, with the same notations of Lemma \ref{modsol}, it is easy to verify that
	\begin{equation}
		\mathscr{M}_\om=Q(\textbf{a}), \quad \mathscr{M}_{\om'}=Q(\textbf{a}').
	\end{equation}
	The conclusion follows by Proposition \ref{mainprel} and Remark \ref{dualsol}.
\end{rmk}

To conclude this section, we show that, given a countable family $(G_i)_{i\in I}$ of subgroups of $(\mathbb{Q},+)$, we are able to construct a Kronecker flow such that its frequency module is isomorphic to $\bigoplus_{i \in I}G_i$ and we give the general structure of the orbits. First, we need the following
\begin{prop}\label{unique}
	Let $G$ be a subgroup of $(\R,+)$. Assume that there exist a set of indices $I$ and a family of rationally independent numbers $(\alpha_i)_{i \in I}$ such that for any $g\in G$ there is a finite subset $J\subseteq I$ and $(r_i)_{i \in J}$, $r_i \in \Q$ for all $i \in J$, satisfying $g=\sum_{i \in J}r_i\alpha_i$. Then, $G$ is isomorphic to $\bigoplus_{i \in I}R_i$, where, for any $i \in I$, $R_i$ is the following subgroup of $(\Q,+)$:
	\begin{equation}
	R_i:=\alpha_i^{-1}\{g \in G \; : \; g=\alpha_i r, \ \text{for some} \ r\in\Q\}.
	\end{equation}
	\begin{proof}
		Since $(\alpha_i)_{i \in I}$ are rationally independent, then for any finite subset $J\subseteq I$, the equation $\sum_{i \in J}\alpha_ir_i=0$  has a unique solution in $\bigoplus_{i \in J}\Q$, that is, $r_i=0$ for all $i \in J$.
	\end{proof}
\end{prop}

\begin{thmC}\label{productcirclesol}
	Let $I$ be a countable set of indices and $(G_i)_{i \in I}$ a family of subgroups of $(\mathbb{Q},+)$. Then, there exists a Kronecker flow whose frequency module is isomorphic to $\bigoplus_{i \in I} G_i$. Moreover, the closure of any orbit is homeomorphic to a product $\prod_{i \in I} X_i$, where each factor $X_i$ is a circle if $G_i$ is free, and a solenoid otherwise.
\end{thmC}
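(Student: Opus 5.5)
The plan is to realise each $G_i$ by its own block of frequencies, rescaled by a rationally independent family of real numbers, and then read off the closure of the orbit via Pontryagin duality.

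\emph{Step 1: the frequency vector.} Since $I$ is countable, fix a family $(\alpha_i)_{i\in I}$ of positive reals that are rationally independent; for instance, algebraically independent transcendentals, or $\alpha_i=\log \mathbf{p}_i$. By Lemma \ref{modsol}, each $G_i$ is either free or isomorphic to $Q(\textbf{a}^{(i)})$ for some $\textbf{a}^{(i)}\in\Sigma$. A nonzero subgroup of $\Q$ has rank $1$, so if it is free it is isomorphic to $\Z$; the case $G_i=\{0\}$ is trivial and can be discarded, with $X_i$ a point. For a free $G_i$ I take the single frequency $\alpha_i$. For a non-free $G_i$ I take the countable block $\alpha_i\,\om(\textbf{a}^{(i)})=(\alpha_i\om_j(\textbf{a}^{(i)}))_{j\in\N}$, with $\om(\textbf{a})$ as in \eqref{omegaa}. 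Since $I\times\N$ is countable, I enumerate all these frequencies along $\N$ to obtain $\om\in\R^\N$. Reordering coordinates is an automorphism of $\T^\N$ given by a permutation matrix (Proposition \ref{matrix}), so the enumeration does not matter.

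\emph{Step 2: the frequency module.} Every element of $\mathscr{M}_\om$ is a finite sum $\sum_i r_i\alpha_i$ with $r_i\in\Q$, because every frequency is a rational multiple of some $\alpha_i$. I then apply Proposition \ref{unique}: $\mathscr{M}_\om\cong\bigoplus_{i\in I}R_i$, where $R_i=\alpha_i^{-1}(\mathscr{M}_\om\cap\alpha_i\Q)$. The point to verify is that $R_i$ is exactly the subgroup generated by the $i$-th block divided by $\alpha_i$. If $\sum_k r_k\alpha_k\in\alpha_i\Q$, rational independence forces the terms with $k\neq i$ to vanish, so only the $i$-th block contributes. Hence $R_i=\Z$ when $G_i$ is free, and $R_i=\mathscr{M}_{\om(\textbf{a}^{(i)})}=Q(\textbf{a}^{(i)})\cong G_i$ by Remark \ref{dualsol}. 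I should also remark that the proof of Proposition \ref{unique} as written only records uniqueness of rational coefficients. The isomorphism is the map $g\mapsto(r_i)_i$, which is injective and additive by that uniqueness and lands in $\bigoplus R_i$ because $g$ is a finite sum of elements $r_i\alpha_i$, each belonging to $\mathscr{M}_\om$ in this construction. I expect this to be the main point requiring care: one must check that each $r_i\alpha_i$ individually lies in $\mathscr{M}_\om$, which holds here because an integer combination of frequencies splits blockwise.

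\emph{Step 3: topology of the orbit closure.} By Proposition \ref{mainprel}, every orbit closure is homeomorphic to $\mathscr{M}_\om^*\cong(\bigoplus_i G_i)^*$. By Theorem \ref{dirsumdual}, this is $\prod_i G_i^*$ with the product topology. Finally, $\Z^*\cong\T$ is a circle by Example \ref{extor}, and $Q(\textbf{a})^*\cong\sigma_{\textbf{a}}$ is a solenoid by Remark \ref{dualsol}, which rests on Theorem \ref{thmsol} and Proposition \ref{mainprel}. Isomorphic discrete groups have homeomorphic duals, and a product of homeomorphisms is a homeomorphism in the product topology, which gives $\prod_i X_i$ as claimed.
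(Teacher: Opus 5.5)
Your proof is correct and follows essentially the same route as the paper: you realize each $G_i$ by a block of frequencies scaled by a rationally independent constant (the paper uses $\sqrt{\mathbf{p}_n}$ and powers of $\pi$ where you use $\log\mathbf{p}_i$), identify $\mathscr{M}_\omega$ via Proposition~\ref{unique}, and read off the orbit closure from Proposition~\ref{mainprel}, Theorem~\ref{dirsumdual} and Remark~\ref{dualsol}. Your two extra checks are well taken. Proposition~\ref{unique} as stated genuinely needs each component $r_i\alpha_i$ to lie in the group (e.g.\ $G=\mathbb{Z}(\alpha_1+\alpha_2)$ has all $R_i=\{0\}$), and your blockwise splitting supplies exactly this; also, a trivial $G_i$ is free but contributes a point rather than a circle.
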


\begin{proof}
By taking an arbitrary enumeration of $I$, we can think at $I$ as a subset of $\N$. By Lemma \ref{modsol}, $\bigoplus_{i \in I}G \cong \bigoplus_{k \in K}\Z \ \oplus \  \bigoplus_{n \in J}Q({\textit{\textbf{a}}^n})$, for some $J,K\subseteq I$, $J \cap K = \emptyset$, and some family $(\textit{\textbf{a}}^{n})_{n \in J}$ of sequences of natural numbers such that, for any $n \in J$, $\textit{\textbf{a}}^{n}\in \Sigma$. Let $(\textbf{p}_n)_{n \in \N}$ be the list of all prime numbers in ascending order. Then, define $\om \in \R^\N$ as
\begin{equation}
\om_{j}:=\dfrac{\sqrt{\textbf{p}_n}}{\prod_{k=1}^{N}a^n_k} \ \ \ \text{if} \ j=\textbf{p}_n^{N}, \ \text{for some} \ n \in J, \ \text{and} \ N\in \N,
\end{equation}
\noindent and we complete the construction of $\om$ by associating powers of $\pi$, i.e., $\pi^k$, for all $k \in K$, and zeros to all the remaining components. By Proposition \ref{unique}, an easy calculation gives
\begin{equation}
\mathscr{M}_\om\cong\bigoplus_{k \in K}\Z \ \oplus \ \bigoplus_{n \in J}\bigg\{ \dfrac{m}{\prod_{k=1}^Na^{n}_k} \; : \; m\in\Z \ \text{and} \ N \in \N \bigg\}.
\end{equation}
Furthermore, by Proposition \ref{mainprel}, Proposition \ref{unique} and Remark \ref{dualsol}, the closure of any orbit induced by $(\T^\N,\Phi_\om)$ is homeomorphic to the direct product of circles and solenoids
\begin{equation}
\prod_{k \in K}\T \ \times \ \prod_{n \in J}\sigma_{\textit{\textbf{a}}^n}.
\end{equation}
\end{proof}

\subsection{Solenoidal Solutions of the Benjamin-Ono Equation}\label{ssbo}
As an application of the results obtained in the previous section, we show that the Benjamin-Ono equation for a $2\pi$-periodic in space, real valued function
\begin{equation}\label{BOeq}
u_t=\mathrm{H}(u_{xx})-2uu_x, \ \ \ x \in \T, \ t \in \R,
\end{equation}
where $\mathrm{H}( \ \cdot \ )$ denotes the Hilbert transform 
\begin{equation}\label{htransform}
\mathrm{H}(u)(x,t)=-\frac{\im}{2\pi}\sum_{j \in \Z}\sgn(j)\int_{0}^{2\pi}u(x',t)e^{\im j(x- x')}dx',
\end{equation}
admits a family of (non-typical) almost-periodic solutions that are dense on invariant subsets of $L^2(\T)$ which are homeomorphic to the product of a circle and a solenoid.

\begin{notation}
In the following, $L^2_0(\T)$ denotes the space of functions $u \in L^2(\T)$ with $\int_0^{2\pi}u(x)dx=0$.
\end{notation}
The following result is proved in \cite{BO} by Kappeler and Gerard.
\begin{thm}[\cite{BO}]\label{KG}
There exists a global bi-analytic diffeomorphism
\begin{equation}
\Psi:L_0^2(\T)\ni u \mapsto z \in h^{1/2}:=\bigg\{z\in \C^\N \; : \; \|z\|^2_{1/2}:=\sum_{j\in \N} j|z_j|^2<\infty\bigg\},
\end{equation}
\noindent called \textquotedblleft Birkhoff map", such that, for any initial data $\underline{u}\in L^2_0(\T)$, the solution of the Benjamin-Ono equation \eqref{BOeq} --- that is supported on the invariant torus $\Psi^{-1}(\mathcal{T}_{\gamma})$
\begin{equation}\label{tc}
\mathcal{T}_{\gamma}:=\{z \in h^{1/2} \; : \; |z_j|^2=\gamma_j:=|\Psi_j(\underline{u})|^2, \ \forall j\in \N  \}
\end{equation}
whose dimension is equal to the number of non-zero \textquotedblleft Birkhoff coodinates" associated with the initial data --- is given by
\begin{equation}\label{BOflow}
\Psi_j(u(t))=\Psi_j(\underline{u})e^{\im \om_j(\gamma)t}, \ \ \ \forall j \in \N,
\end{equation}
with frequencies
\begin{equation}\label{BOfreq}
\om_j(\gamma):=j^2-2\sum_{k\in \N} \min(j,k)\gamma_k, \quad \forall j \in \N.
\end{equation}
\end{thm}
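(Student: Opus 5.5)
Theorem \ref{KG} is the main result of \cite{BO}, and in the paper it is imported rather than reproved. The plan below reconstructs its proof along the lines of Gérard and Kappeler, in four steps: a Lax operator, Birkhoff coordinates built from its spectral data, the Hamiltonian written in terms of the actions, and the resulting formula \eqref{BOfreq} for the frequencies.

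\emph{Lax pair.} For $u\in L^2_0(\T)$, let $L_u:=D-T_u$ act on the Hardy space $L^2_+(\T)$, where $D=-\im\partial_x$ and $T_u$ is the Toeplitz operator with symbol $u$. One checks that $L_u$ is self-adjoint, bounded below, and has compact resolvent. Its spectrum is simple, $\lambda_0(u)<\lambda_1(u)<\cdots$, and the gaps satisfy
\begin{equation*}
\gamma_n(u):=\lambda_n(u)-\lambda_{n-1}(u)-1\geq 0 .
\end{equation*}
One then shows that \eqref{BOeq} is equivalent to a Lax equation $\partial_t L_u=[B_u,L_u]$. Hence every $\lambda_n$, and so every $\gamma_n$, is a conserved quantity. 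The trace formula $\sum_n n\gamma_n=\frac{1}{4\pi}\|u\|^2_{L^2}$ shows that $(\sqrt{\gamma_n})_n$ lies in $h^{1/2}$.

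\emph{Birkhoff coordinates.} Let $f_n$ be the eigenfunctions of $L_u$, normalized by suitable phase conditions, for instance $\langle 1,f_0\rangle>0$ and $\langle f_n,Sf_{n-1}\rangle>0$ with $S$ the shift. Set
\begin{equation*}
z_n:=\frac{\langle 1,f_n\rangle}{\sqrt{\kappa_n}}, \qquad |z_n|^2=\gamma_n,
\end{equation*}
where $\kappa_n>0$ are explicit normalizing constants given by infinite products of eigenvalue ratios. The map $\Psi:u\mapsto (z_n)_n$ is the candidate Birkhoff map, and there are three things to prove about it.
\begin{enumerate}
\item It is real-analytic.
\item It is bijective onto $h^{1/2}$. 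Surjectivity comes from an explicit inversion formula expressing $\Pi u$ through the resolvent of the shift compressed by the spectral data, together with an analysis of finite-gap potentials and a density and properness argument.
\item It is canonical, i.e.\ $\{z_n,\overline{z_m}\}=-\im\delta_{nm}$ and $\{z_n,z_m\}=0$ with respect to the Gardner bracket. This is obtained by computing the brackets on finite-gap potentials and extending by density.
\end{enumerate}
I expect this step to be the main obstacle, since everything else is comparatively formal. The first approach I would take is to verify the bracket relations on the finite-dimensional manifolds of $N$-gap potentials, where explicit formulas are available, and then extend them by density.

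\emph{Hamiltonian in terms of the actions.} Using the trace formulas for $\sum_n \lambda_n^k$ expressed through the $\gamma$'s, one writes the Benjamin–Ono Hamiltonian as
\begin{equation*}
H\circ\Psi^{-1}=\sum_{n\in\N} n^2\gamma_n-\sum_{n\in\N}\Big(\sum_{k\geq n}\gamma_k\Big)^2 ,
\end{equation*}
up to harmless normalization constants.

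\emph{Frequencies and flow.} Differentiating in $\gamma_n$ gives
\begin{equation*}
\om_n=n^2-2\sum_{m\leq n}\sum_{k\geq m}\gamma_k=n^2-2\sum_{k\in\N}\min(n,k)\gamma_k ,
\end{equation*}
which is \eqref{BOfreq}. In canonical coordinates, Hamilton's equations are $\dot z_n=\im\,\om_n(\gamma)z_n$ with each $\gamma$ constant in time. This yields the explicit flow \eqref{BOflow}, and the invariance of the set $\mathcal{T}_\gamma$ in \eqref{tc}.
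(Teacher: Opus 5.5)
The paper gives no argument for this statement: it is imported from \cite{BO}, so the only comparison is with that source. Your outline follows the G\'erard--Kappeler architecture: the Lax operator $L_u=D-T_u$ on the Hardy space, the gaps $\gamma_n=\lambda_n-\lambda_{n-1}-1$, the coordinates $\langle 1,f_n\rangle/\sqrt{\kappa_n}$ with $|z_n|^2=\gamma_n$, and the Hamiltonian written as a function of the actions. Differentiating $\sum_n n^2\gamma_n-\sum_n\bigl(\sum_{k\ge n}\gamma_k\bigr)^2$ in $\gamma_n$ does give \eqref{BOfreq}.

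As a proof of the statement as written, however, there is a genuine gap at the words ``bi-analytic diffeomorphism''. The three properties you list for $\Psi$ (real-analyticity, bijectivity onto $h^{1/2}$, canonical brackets) do not make $\Psi^{-1}$ analytic. An analytic bijection can have a non-differentiable inverse, as $x\mapsto x^3$ on $\R$ shows. In finite dimensions canonicity would rule this out. In infinite dimensions the bracket relations at best show that the range of $d\Psi(u)$ contains all finitely supported sequences, hence is dense; they give neither injectivity of $d\Psi(u)$ nor closedness of its range. You must prove separately that $d\Psi(u):L^2_0(\T)\to h^{1/2}$ is a linear isomorphism at every $u$ (for instance, that it is Fredholm of index zero and injective), and then apply the real-analytic inverse function theorem. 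The paper never uses this property afterwards: Remark \ref{conjBO} and Section \ref{ssbo} only need $\Psi$ to be a homeomorphism together with \eqref{BOflow}--\eqref{BOfreq}. It is nonetheless part of the statement.

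A second, smaller gap is in your last step. The Hamiltonian is not defined on $L^2_0(\T)$: it requires $u\in H^{1/2}$. Correspondingly, $H\circ\Psi^{-1}$ contains $\sum_n n^2\gamma_n$, which is finite only for $z\in h^1$, whereas \eqref{BOfreq} is finite for every $z\in h^{1/2}$. So Hamilton's equations $\dot z_n=\im\om_n z_n$, and the chain rule through infinitely many actions behind them, are justified only along sufficiently regular solutions. To obtain \eqref{BOflow} for every $\underline u\in L^2_0(\T)$ you need a limiting argument, combining three facts:
\begin{itemize}
\item the map $(t,z)\mapsto\bigl(z_n e^{\im\om_n(\gamma)t}\bigr)_{n\in\N}$, with $\gamma_k=|z_k|^2$, is continuous on $\R\times h^{1/2}$;
\item $\Psi$ is a homeomorphism;
\item the Benjamin--Ono flow depends continuously on $L^2$ data, or is defined on $L^2_0(\T)$ as the continuous extension of the smooth flow.
\end{itemize}

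There are also two imprecisions. First, the sums $\sum_n\lambda_n^k$ diverge. What is actually used are the spectral moments $\langle L_u^k 1,1\rangle=\sum_n\lambda_n^k|\langle 1,f_n\rangle|^2$, equivalently the large-$\lambda$ expansion of $\langle (L_u+\lambda)^{-1}1,1\rangle$. Here $H(u)=\langle L_u^3 1,1\rangle$ when \eqref{BOeq} is written as $\partial_t u=\partial_x\nabla H(u)$ for the pairing $\langle f,g\rangle=\frac{1}{2\pi}\int_0^{2\pi}f\bar g\,dx$. Second, the normalization constants are not harmless: they are exactly what fixes the coefficients $j^2$ and $-2$ in \eqref{BOfreq}.
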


\begin{rmk}\label{conjBO}
	It is easy to verify that the tori defined in \eqref{tc}, when considered with the subspace topology induced by the $\|\cdot\|_{1/2}$-norm, are homeomorphic to the, possibly infinite, torus $\prod_{j\in\text{supp}(\gamma)}\T$. Namely, the map
	\begin{equation}
	\Xi:\prod_{j\in\text{supp}(\gamma)}\T \ni \theta \mapsto \Xi(\gamma):=(\sqrt{\gamma_j}e^{\im \Theta_j})_{j\in \N}\in \mathcal{T}_{\gamma}\subset h^{1/2},
	\end{equation}
	where $\Theta_j:=0$ for all $j\notin \text{supp}(\gamma)$,
	is a topological embedding. Therefore, since the Birkhoff map $\Psi$ is a homeomorphism, the dynamics induced by the Benjamin-Ono equation on any invariant set $\Psi^{-1}(\mathcal{T}_\gamma)$ is topologically conjugate by
	\begin{equation}\label{Hconj}
	\mathscr{H}:=\Psi^{-1}\circ \Xi \ 
	\end{equation}
	to the Kronecker flow
	\begin{equation}
	\Phi_{\om(\gamma)}:\mathbb{R}\times \prod_{j\in \text{supp}(\gamma)} \T \to \prod_{j\in \text{supp}(\gamma)} \T \ , \quad (t,\theta) \mapsto \Phi^t_{\om(\gamma)}(\theta):=(\Theta_j +\om_j(\gamma) t +2\pi\Z)_{j\in \text{supp}(\gamma)}.
	\end{equation}
	with $\om(\gamma)$ given by \eqref{BOfreq}.
\end{rmk}

The goal of this section is to give a proof of the following theorem.
\begin{thmBO}\label{BO}
Consider the Benjamin-Ono equation for a real-valued function $u(t,x)$ that is $2\pi$-periodic in space:
\begin{equation}
u_t=\mathrm{H}(u_{xx})-2uu_x, \quad x \in \mathbb{T}, \; t \in \mathbb{R},
\end{equation}
where $\mathrm{H}(\cdot)$ denotes the Hilbert transform defined in \eqref{htransform}. Let
\begin{equation}
\Psi:L_0^2(\mathbb{T})\ni u \mapsto z \in h^{1/2}
\end{equation}
be its Birkhoff map \cite{BO}. Given $z\in h^{1/2}$, if there exists $\beta\in \mathbb{R}\setminus \mathbb{Q}$ such that $|z_j|^2\in \beta \mathbb{Q}\setminus\{0\}$ for all $j \in \mathbb{N}$, then the closure of the orbit with initial data $u = \Psi^{-1}(z)$ under the Benjamin-Ono flow is homeomorphic to the product of a circle and a solenoid.
\end{thmBO}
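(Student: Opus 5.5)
By Remark \ref{conjBO}, the Benjamin--Ono flow on $\Psi^{-1}(\mathcal{T}_\gamma)$, with $\gamma_j=|z_j|^2=\beta q_j$, $q_j\in\Q\setminus\{0\}$, is topologically conjugate by $\mathscr{H}$ to the Kronecker flow $\Phi_{\om(\gamma)}$. Since every $\gamma_j\neq0$, this flow lives on the full torus $\T^\N$. So the orbit closure of $\Psi^{-1}(z)$ is homeomorphic to the orbit closure of $\Phi_{\om(\gamma)}$. By Proposition \ref{mainprel}, that closure is homeomorphic to $\mathscr{M}_{\om(\gamma)}^*$. The plan is therefore purely algebraic:
\begin{enumerate}
\item compute $\mathscr{M}_{\om(\gamma)}$;
\item show it is isomorphic to $\Z\oplus G$ with $G$ a non-free subgroup of $(\Q,+)$;
\item invoke Theorem \ref{dirsumdual}, Lemma \ref{modsol}, Remark \ref{dualsol} and Theorem \ref{thmsol}, which give $\mathscr{M}^*\cong\T\times\sigma_{\textbf{a}}$ for some $\textbf{a}\in\Sigma$.
\end{enumerate}

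For the computation, $z\in h^{1/2}$ gives $\sum_k k\gamma_k<\infty$, so $S:=\sum_k q_k$ converges. Splitting $\min(j,k)$, I write
\[
\om_j=j^2-2\beta\Big(\sum_{k<j}kq_k+j\sum_{k\ge j}q_k\Big),
\]
and then work with finite differences, which generate the same $\Z$-module together with $\om_1$. One gets $\om_1=1-2\beta S$ and $\om_2-\om_1=3+2\beta q_1-2\beta S$. For second differences,
\[
\om_{j+2}-2\om_{j+1}+\om_j=2+2\beta q_{j+1}.
\]
Hence $\mathscr{M}_\om$ is generated by $1-2\beta S$, $2+2\beta q_1$ (from $\om_2-2\om_1$) and the elements $2+2\beta q_{j+1}$, $j\ge1$. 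All of these lie in the $\Q$-span of $1-2\beta S$ and the $\Q$-span of $\{1,\beta\}$. The key step is to rewrite these generators in a basis adapted to Proposition \ref{unique}.
\begin{itemize}
\item Subtracting consecutive generators gives $2\beta(q_{j+1}-q_j)\in\mathscr{M}_\om$.
\item A suitable combination produces a single element of the form $m+\beta r$ that plays the role of a \textquotedblleft circle\textquotedblright{} direction.
\end{itemize}
The aim is to show $\mathscr{M}_\om=\Z\alpha_1\oplus\beta R$, with $R\subset\Q$ the group generated by the $2q_j$ (or their differences) and $\alpha_1$ rationally independent from $\beta$.

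For $G=\beta R$ to be non-free, I use that $q_j\to0$ (summability) with all $q_j\neq0$. Then $R$ contains infinitely many distinct nonzero rationals accumulating at $0$, so $R$ is not cyclic. A subgroup of $\Q$ that is not cyclic is not finitely generated, and a rank-one free group is cyclic, so $R$ is non-free. Lemma \ref{modsol} then gives $R\cong Q(\textbf{a})$ for some $\textbf{a}\in\Sigma$, and Remark \ref{dualsol} identifies its dual with $\sigma_{\textbf{a}}$. Since $\mathscr{M}_\om\cong\Z\oplus Q(\textbf{a})$, Theorem \ref{dirsumdual} together with Example \ref{extor} yields $\mathscr{M}_\om^*\cong\T\times\sigma_{\textbf{a}}$.

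\textbf{Main obstacle.} The hard step is the splitting $\mathscr{M}_\om\cong\Z\oplus R$. The projections of the generators onto the \textquotedblleft$1$\textquotedblright{} and \textquotedblleft$\beta$\textquotedblright{} directions are coupled: the integer parts $2$ and the rational parts $2\beta q_j$ come together. A priori the module is only an extension, not a direct sum, and its rank could even be $3$ if $\beta S\notin\Q+\Q\beta$. I would handle this in order:
\begin{enumerate}
\item Use the second differences to show that the kernel of the projection onto the $\Q\beta$-coordinate is infinite cyclic.
\item Show that the image of this projection is a rank-one subgroup of $\Q\beta$.
\item Split the resulting short exact sequence $0\to\Z\to\mathscr{M}_\om\to R\to0$ (a splitting for groups of this rank-two type is not automatic and must be checked directly).
\end{enumerate}
If the splitting fails, I would instead compute $\mathscr{M}_\om^*$ directly as an extension of $\T$ by a solenoid and argue, as in Theorem \ref{thmsol}, that it is still homeomorphic, though not necessarily isomorphic as a group, to $\T\times\sigma_{\textbf{a}}$. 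The rank-$3$ case, where $1$, $\beta$ and $\beta S$ are rationally independent, also needs a separate check.
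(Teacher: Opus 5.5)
Your reduction to $\mathscr{M}_{\om(\gamma)}$ and your finite-difference computation are correct. They follow the paper's route: conjugacy via $\mathscr H$, Proposition~\ref{mainprel}, then $\Z\oplus{}$(non-free rank one), then duality. After a unitriangular change of generators, $\mathscr M_\om$ is generated by $d_1=1-2\beta S$ and $d_{k+1}=2+2\beta q_k$ for $k\ge 1$. But the proposal stops at the two points that matter, and one of them cannot be closed, because there the statement is false.

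Suppose $1,\beta,\beta S$ are rationally independent. Then $\Z d_1\cap(\Q+\Q\beta)=0$, so $\mathscr M_\om=\Z d_1\oplus H$ with $H=\langle 2+2\beta q_k : k\ge1\rangle$. Projecting $H$ to its rational coordinate gives $2\Z$, so $H\cong\Z\oplus K$ with $K=H\cap\Q\beta\supseteq\langle 2\beta(q_j-q_k)\rangle$, which is non-free. Hence $\mathscr M_\om\cong\Z^2\oplus Q(\mathbf a)$, and the orbit closure is $\T^2\times\sigma_{\mathbf a}$. By Theorem~\ref{E} (the modules have ranks $3$ and $2$), or simply by dimension, this is not homeomorphic to any $\T\times\sigma_{\mathbf b}$.

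This case occurs under the stated hypotheses. Take $\beta=\sqrt2$ and rationals $q_k>0$ with $\sum_k kq_k<\infty$ and $\sum_k q_k=\sqrt3$, and set $|z_k|^2=\sqrt2\,q_k$. So your fallback, showing the dual is still homeomorphic to $\T\times\sigma_{\mathbf a}$, cannot work. An extra hypothesis is needed, e.g.\ $\sum_k|z_k|^2=\beta S\in\Q+\Q\beta$.

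Under that hypothesis the splitting is easy, but not along the sequence you propose. Your step 3 has no general justification: an extension $0\to\Z\to M\to R\to 0$ with $R$ non-free of rank one need not split. Project the other way instead. The rational-coordinate map $\mathscr M_\om\to\Q$ has image generated by the rational part of $d_1$ and by $2$. That is a finitely generated subgroup of $\Q$, hence infinite cyclic and free. So $\mathscr M_\om\cong\Z\oplus K$ with $K=\mathscr M_\om\cap\Q\beta$. $K$ is non-free because it contains the nonzero elements $2\beta(q_j-q_k)$, which accumulate at $0$. Note that it is this intersection, not the projection onto $\Q\beta$, whose non-freeness you must check. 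Lemma~\ref{modsol}, Remark~\ref{dualsol} and Theorem~\ref{dirsumdual} then give $\T\times\sigma_{\mathbf a}$.

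The paper's own proof (Propositions~\ref{split}--\ref{lastbo}) addresses neither issue. It treats $R$ as a subgroup of $\Q$, but already $\sum_k\min(1,k)s_k=\sum_k s_k$ need not be rational. It also obtains the direct sum from Proposition~\ref{unique}, whose proof only shows uniqueness of rational coordinates. That proposition fails, e.g., for $G=\Z(1+\sqrt2)$ with $\alpha_1=1$, $\alpha_2=\sqrt2$, where both $R_i$ vanish. Your suspicion about both the splitting and the rank is well founded.
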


In the above theorem, the closure is taken with respect to the subspace topology induced by the $L^2$-norm. Then, by Remark \ref{conjBO}, it is sufficient to study the orbits of a Kronecker flow with frequencies defined as in \eqref{BOfreq} on the infinite torus $\T^\N$ endowed with the product topology.

\begin{prop}\label{split}
	Let $\alpha$ be an irrational number and $\om=(\om_j)_{j \in \N} \in \R^\N$ a list defined by 
	\begin{equation}
	\om_j=j^2+\alpha\sum_{k\in \N} \min(j,k)s_k 
	\end{equation}
	where $(s_j)_{j\in \N}$ is a list of rational numbers such that $\sum_{j \in \N} js_j$ is finite. Then, the frequency module $\mathscr{M}_\om$ is isomorphic to $\Z \oplus R$, where $R$ is the subgroup of $(\Q,+)$ defined by
	\begin{equation}\label{S}
	R:=\bigg\{\sum_{j\in \N} \nu_j\sum_{k\in \N} \min(j,k)s_k \; : \; \nu\in\Z^{(\N)}   \bigg\}.
	\end{equation}
\end{prop}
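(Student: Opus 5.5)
Throughout I write $c_j:=\sum_{k\in\N}\min(j,k)s_k$, so that $\om_j=j^2+\alpha c_j$ and $R=\{\sum_j\nu_j c_j:\nu\in\Z^{(\N)}\}$. The plan is to exhibit $\mathscr{M}_\om$ as an internal direct sum of $\Z$ and $\alpha R$ inside $(\R,+)$, and then use $\alpha R\cong R$.

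First, each $c_j$ is a well-defined rational number. The series converges absolutely because $|\min(j,k)s_k|\le k|s_k|$ and $\sum_k k|s_k|<\infty$; I read the hypothesis that $\sum_j js_j$ is finite as this absolute summability. Rationality is less clear, since $c_j$ is an infinite sum of rationals, so I split it as
\begin{equation*}
c_j=\sum_{k<j}k s_k+j\sum_{k\ge j}s_k,
\end{equation*}
where the first part is a finite sum of rationals. So $c_j\in\Q$ for all $j$ reduces to the tail sums $\sum_{k\ge j}s_k$ being rational. Since $\sum_{k\ge j}s_k=\sum_k s_k-\sum_{k<j}s_k$, it suffices that $S:=\sum_k s_k$ is rational, and I would treat this as part of what ``finite'' means in the hypothesis, as the author presumably intends. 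I regard this as the only genuinely delicate point. In the Benjamin--Ono application, $\gamma_k\in\beta\Q$ and $\sum k\gamma_k<\infty$ are given, but rationality of the total is not automatic. If it fails, I would either adjust the choice of $\alpha$ to absorb $S$, or accept a rank-two correction.

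Next I decompose $\mathscr{M}_\om$. For $\nu\in\Z^{(\N)}$,
\begin{equation*}
\sum_j\nu_j\om_j=\sum_j\nu_j j^2+\alpha\sum_j\nu_j c_j\in\Z+\alpha R,
\end{equation*}
so $\mathscr{M}_\om\subseteq\Z+\alpha R$. The sum $\Z+\alpha R$ is direct: if $n=\alpha r$ with $n\in\Z$ and $r\in R\subset\Q$, then irrationality of $\alpha$ forces $r=0$, and hence $n=0$. This is exactly the rational-independence argument of Proposition~\ref{unique} applied to $1$ and $\alpha$.

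The remaining step is the reverse inclusion, namely that both $1$ and every $\alpha\sum_j\nu_j c_j$ lie in $\mathscr{M}_\om$. The natural route is to find a combination $\nu$ killing the $j^2$ part or the $c_j$ part separately.
\begin{itemize}
\item Since $c_j$ is piecewise affine in $j$ (for $j\ge m$ beyond the last nonzero $s_k$ it equals $\sum_k ks_k$), the second difference $\om_{j+1}-2\om_j+\om_{j-1}=2+\alpha(c_{j+1}-2c_j+c_{j-1})=2-\alpha js_j$ or similar. Hence at large $j$ we would get $2\in\mathscr{M}_\om$, not $1$.
\item This suggests the clean isomorphism should not rely on a literal equality $\mathscr{M}_\om=\Z\oplus\alpha R$. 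Instead, I would use that $\mathscr{M}_\om$ is a subgroup of $\Z\oplus\alpha R\cong\Z\oplus R$ which projects onto $\Z$-components generated by the values $\sum\nu_j j^2$ (all integers, including $1=\om$-coefficient for $\nu=e_1$) and onto $R$.
\end{itemize}

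Concretely, I would consider the homomorphism $\Z^{(\N)}\to\Z\oplus R$ given by $\nu\mapsto(\sum\nu_j j^2,\sum\nu_j c_j)$, which by directness has image isomorphic to $\mathscr{M}_\om$. I would then show this image is a finite-index subgroup of $\Z\oplus R$ or rewrite it in the form $\Z\oplus R'$ with $R'\cong R$. The tool is a change of basis by an element of $\mathrm{Aut}(\Z^{(\N)})$, as in Lemma~\ref{abstractreduction}: subtract suitable integer multiples of $e_1$ (for which $j^2=1$) from each $e_j$. This yields generators $e_j-j^2e_1$ whose images are $(0,c_j-j^2c_1)$, together with $e_1\mapsto(1,c_1)$. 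The image is therefore $\{(n,nc_1+r'):n\in\Z,\ r'\in R'\}$ with $R'=\langle c_j-j^2c_1\rangle$, which is isomorphic to $\Z\oplus R'$ via $(n,x)\mapsto(n,x-nc_1)$.

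Finally I would check that $R'$ is isomorphic to $R$, or simply take the statement's $R$ to be this group. I expect this identification of $R'$ with $R$ to be the main obstacle. I would first try to show that $c_1\in R'$ up to the appropriate rank-one subgroup-of-$\Q$ isomorphism, using Lemma~\ref{modsol}'s classification of subgroups of $\Q$.
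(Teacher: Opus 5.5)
Several of your steps are correct, and in one respect sharper than the paper. The rationality caveat is genuine: the statement presupposes $c_1=\sum_k s_k\in\Q$, and this suffices because $c_j=jc_1+\sum_{k<j}(k-j)s_k$. The sum $\Z+\alpha R$ is direct. The generators $e_1$ and $e_j-j^2e_1$ correctly give $\mathscr{M}_\om\cong\{(n,nc_1+r'):n\in\Z,\ r'\in R'\}\cong\Z\oplus R'$ with $R'=\langle c_j-j^2c_1:j\ge2\rangle$. You are also right not to expect $\mathscr{M}_\om=\Z\oplus\alpha R$ literally. The paper's one-line proof passes over exactly this point: Proposition~\ref{unique} only proves uniqueness of the decomposition, and here $\alpha^{-1}(\mathscr{M}_\om\cap\alpha\Q)$ is your $R'$, not $R$.

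The gap is the final step $R'\cong R$, which you leave open, and the route you sketch for it (showing $c_1\in R'$) fails in general. Take $c_0=0$, $c_j=1$ for $3\nmid j$, $c_j=1+2^{-(2j+1)}$ for $3\mid j$, and $s_k:=2c_k-c_{k-1}-c_{k+1}$. Then $s_k\in\Q$, $\sum_k k|s_k|<\infty$, $\sum_k s_k=1$ and $\sum_k\min(j,k)s_k=c_j$. However, every generator $c_j-j^2$ of $R'$ lies in the kernel of the map $\Z[1/2]\to\Z/3\Z$, $a2^{-m}\mapsto a2^m\bmod 3$, while $c_1=1$ does not; so even $1\notin\mathscr{M}_\om$. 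Nor can you ``take the statement's $R$ to be $R'$''. That changes the statement, and Proposition~\ref{snotfree} later uses the elements $\sum_{k>n}s_k$, which come from $\nu=e_{n+1}-e_n$ with $\sum_j\nu_jj^2=2n+1\neq0$.

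What is missing is an isomorphism that is not an equality.

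(i) $R=R'+\Z c_1$, because $\sum_j\nu_jc_j=\big(\sum_j\nu_jj^2\big)c_1+\sum_{j\ge2}\nu_j(c_j-j^2c_1)$.

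(ii) If $R'\neq0$, pick $0\neq r\in R'$ and write $c_1/r=a/b$. Then $bc_1\in R'$, so $R/R'$ is finite cyclic of some order $n$ and $nR\subseteq R'\subseteq R$. This is your ``finite index'' idea, and it suffices. For $x\in nR$, the $p$-heights of $x$ in $nR$, $R'$ and $R$ agree for $p\nmid n$ and differ by at most $v_p(n)$ otherwise. So these rank-one groups have the same type, and $R'\cong R\cong nR$ by Baer's theorem (cf.\ Theorem~\ref{b}).

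(iii) If $R'=0$, then $c_j=j^2c_1$ for all $j$. Since $c_{j+1}-c_j=\sum_{k>j}s_k\to0$, this forces $c_1=0$ and hence $R=0$. This step is where the specific form of $c_j$ matters: for an arbitrary rational sequence with $c_j=j^2c_1\neq0$ one would get $\Z\oplus R'\cong\Z$ but $\Z\oplus R\cong\Z^2$.
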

\begin{proof}
	The claim follows from Proposition \ref{unique} and from the fact that
	\begin{equation*}
	\bigg\{\sum_{j\in \N} \nu_j j^2 : \nu \in \Z^{(\N)} \bigg\} = \Z,
	\end{equation*}
	which holds since the set of squares $\{j^2 : j \in \N\}$ generates the entire additive group $(\Z, +)$. This is immediately verified by considering the map $n \mapsto n e_1$, where $e_1$ is the first vector of the canonical basis of $\Z^{(\N)}$. Since the weighted sum associated with $e_1$ is $1^2 = 1$, and $1$ is the generator of $\Z$, the image of the mapping $\nu \mapsto \sum \nu_j j^2$ must coincide with $\Z$. Formally, the existence of $e_1$ such that $\sum (e_1)_j j^2 = 1$ ensures that the generator of the target group is reached, making any further contributions from $j > 1$ redundant for the purpose of surjectivity.
\end{proof}

\begin{prop}\label{TS*}
	Given a Kronecker flow $\Phi_\om$ on the infinite torus, with
	\begin{equation}
	\om_j:=j^2+\alpha\sum_{k\in \N} \min(j,k)s_k, \ \ \ \alpha \in \R\setminus \Q,
	\end{equation}
	where  $(s_j)_{j\in \N}$ is a list of rational numbers such that the sum $\ \sum_{j\in \N} js_j$ is finite, the closure of any orbit is homeomorphic to $\T \times R^*$, where $R$ is defined in \eqref{S}.
	\begin{proof}
		By Proposition \ref{mainprel} the closure of any orbit is homeomorphic to the Pontryagin dual of $\Z \oplus R$, which, by Theorem \ref{dirsumdual}, Proposition \ref{split}, and Example \ref{extor}, turns out to be equal to $\T\times R^*$.
	\end{proof} 
\end{prop}

Now we prove that, if the support of $(s_j)_{j\in \N}$ is not finite, and each $s_j$ is non-negative, the subgroup $R\subset (\Q,+)$, defined in \eqref{S}, is not free. The strategy is based on the following lemma, which is proved in Appendix \ref{notfreeA}.
\begin{lemma}\label{notfree}
	Let $G$ be a subgroup of $(\Q,+)$. If there exists a sequence $(g_n)_{n\in\N}\subset G$, $g_n\neq 0$ for all $n \in \N$, converging to $0$ with respect to the subspace topology induced by the standard one on $\R$, then, $G$ is not free.
\end{lemma}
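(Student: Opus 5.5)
We argue by contradiction: assume $G$ is free, and combine freeness with the structure of $(\Q,+)$ to show $G$ is cyclic. A cyclic subgroup of $\Q$ has no nonzero elements accumulating at $0$, which contradicts the existence of the sequence $(g_n)_{n\in\N}$.

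The key algebraic step is that \emph{any two elements of $\Q$ are linearly dependent over $\Z$}. Indeed, if $x=p/q$ and $y=r/s$ are nonzero, then $(qr)\,x-(ps)\,y=pr-pr=0$ with $qr,ps\neq 0$. Consequently, a free subgroup of $(\Q,+)$ has rank at most $1$. A basis with two distinct elements $b_1,b_2$ would give a nontrivial relation $m b_1 - n b_2=0$, contradicting the uniqueness of the coordinates in the isomorphism with $\Z^{(J)}$ from Definition \ref{fa}. Since $G$ contains $g_1\neq 0$, it is nontrivial, so the basis consists of exactly one element $b\neq0$, and $G=b\Z$.

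We then conclude. Every nonzero element of $b\Z$ has absolute value at least $|b|>0$, so $|g_n|\geq |b|$ for all $n\in\N$. This is incompatible with $g_n\to 0$ in the topology of $\R$, and the contradiction shows that $G$ is not free.

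The main point requiring care is the rank bound, namely that freeness in the sense of Definition \ref{fa} (an isomorphism with $\Z^{(J)}$) forces $|J|\le1$. This reduces to observing that the images $b_1,b_2$ of two distinct basis vectors would satisfy a nontrivial integer relation, which is impossible in $\Z^{(J)}$. With that in hand, everything else is elementary.
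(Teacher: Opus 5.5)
Your proof is correct and follows the paper's route. Assuming freeness, you show $G$ is cyclic, $G=b\Z$, and then note that the nonzero elements of $b\Z$ are bounded away from $0$. The paper gets cyclicity by citing Corollary \ref{freesubofq2}, which rests on the rank-$1$ property of subgroups of $\Q$. You derive it directly from the explicit integer relation between two nonzero rationals, and you also spell out the final discreteness step and the use of $g_1\neq 0$ to exclude the trivial group, both of which the paper leaves implicit.
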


\begin{prop}\label{snotfree}
	With the same notations as in Proposition \ref{TS*}, if moreover the support of $(s_j)_{j\in\N}$ is not finite, and $s_j\geq0$ $\forall j\in\N$, the subgroup $R\subset (\Q,+)$ defined in \eqref{S} is not free.
	\begin{proof}
		Consider the sequence $(\nu^n)_{n\in\N} \subset \Z^{(\N)}$ defined by $\nu^n=e_{n+1}-e_n$. Then,
		\begin{align*}
		g_n:&=\sum_{j=1}^\infty \nu^n_j\sum_{k=1}^\infty \min(j,k)s_k=\sum_{k=1}^{\infty}\min(n+1,k)s_k-\sum_{k=1}^\infty \min(n,k)s_k\\
		&=\sum_{k=1}^{n+1}ks_k+(n+1)\sum_{k=n+2}^\infty s_k-\sum_{k=1}^nks_k-n\sum_{k=n+1}^\infty s_k\\
		&=(n+1)s_{n+1}+n\bigg(\sum_{k=n+2}^\infty s_k-\sum_{k=n+1}^\infty s_k\bigg)+\sum_{k=n+2}^\infty s_k\\
		&=(n+1)s_{n+1}-ns_{n+1}+\sum_{k=n+2}^\infty s_k\\
		&=\sum_{k>n}s_k.
		\end{align*}
		By assumption, the support of $(s_j)_{j\in \N}$ is not finite, and each $s_j$ is non-negative. Then, $g_n\neq 0$ for all $n \in \N$, and the sequence $(g_n)_{n\in\N}\subset R$ converges to $0$. The claim follows by Lemma \ref{notfree}.
	\end{proof}
\end{prop}

\begin{prop}\label{lastbo}
	Let  $\Phi_\om$ be a Kronecker flow on the infinite torus where $\om$ satisfies the same assumptions as in Proposition \ref{TS*}. If we also assume that the support of $(s_j)_{j\in \N}$ is not finite, and each $s_j$ is non-negative, then the closure of any orbit is homeomorphic to the product of a circle and a solenoid.
	\begin{proof}
		The claim follows directly from Propositions \ref{TS*} and \ref{snotfree}, Lemma \ref{modsol} and Remark~\ref{dualsol}.
	\end{proof}
\end{prop}

We are now in a position to prove Theorem \ref{BO}.

	\begin{proof}[Proof of Theorem \ref{BO}]
		Let $z\in h^{1/2}$ be such that $|z_j|^2\in \beta \Q\setminus\{0\}$, for all $j \in \N$, for a certain $\beta\in \R\setminus \Q$. Let $(\gamma_j)_{j \in \N}$, $\gamma_j:=|z_j|^2$ for all $j \in \N$, and define $\mathcal{T}_\gamma$ as $$\mathcal{T}_\gamma:=\{w \in h^{1/2} \; : \; |w_j|^2=\gamma_j, \; \forall j\in \N \}.$$	As already observed in Remark \ref{conjBO}, the map $\Xi$ defined by
		\begin{equation*}
		\Xi:\prod_{j\in \N}\T \ni \theta \mapsto \Xi(\gamma):=(\sqrt{\gamma_j}e^{\im \Theta_j})_{j\in \N}\in \mathcal{T}_{\gamma}\subset h^{1/2}
		\end{equation*}
		is a topological embedding. Therefore, the dynamics induced by the Benjamin-Ono equation on the invariant torus $\Psi^{-1}(\mathcal{T}_\gamma)$
		is topologically conjugate by
		\begin{equation*}
		\mathscr{H}:=\Psi^{-1}\circ \Xi \ 
		\end{equation*}
		to a Kronecker flow $\Phi_{\om(\gamma)}$ with frequencies $\om(\gamma)=(\om_j(\gamma))_{j\in \N}$, where
		\begin{equation*} 
		\om_j(\gamma):=j^2-2\sum_{k\in \N} \min(j,k)\gamma_k \quad \text{for all} j \in \N.
		\end{equation*}
Therefore, the claim follows from Proposition \ref{lastbo}.
	\end{proof}

\subsection{A Classification Problem}\label{stocazzo}
Here we address a classification problem for Kronecker flows on the infinite torus. The questions is: given two Kronecker flows, are we able to determine whether the closures of their orbits are homeomorphic? To address this problem, we make use of Pontryagin's Duality Theorem \ref{pontryaginthm}, along with a result by Scheinberg \cite{Scheinberg_1974} concerning the equivalence of the notions of homeomorphism and isomorphism for connected, locally compact abelian groups.

\begin{thmD}\label{E}
	Given $\om,\tilde{\om} \in \R^\N$, let $(\mathbb{T}^\mathbb{N}, \Phi_\omega)$ and $(\mathbb{T}^\mathbb{N}, \Phi_{\tilde{\omega}})$ be two Kronecker flows on the infinite torus. The closure of any orbit of $\Phi_\omega$ is homeomorphic to the closure of any orbit of $\Phi_{\tilde{\omega}}$ if and only if the associated frequency modules $\mathscr{M}_\omega$ and $\mathscr{M}_{\tilde{\omega}}$ are isomorphic as abelian groups.
\end{thmD}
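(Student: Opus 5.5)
By Proposition \ref{mainprel}, the closure of any orbit of $\Phi_\om$ is homeomorphic to $\mathscr{M}_\om^*$, and the closure of any orbit of $\Phi_{\tilde\om}$ is homeomorphic to $\mathscr{M}_{\tilde\om}^*$. Here both frequency modules carry the discrete topology. So the statement reduces to a purely group-theoretic claim: for countable torsion-free discrete abelian groups $G,H$, the compact groups $G^*$ and $H^*$ are homeomorphic if and only if $G\cong H$.

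The implication ``$\Leftarrow$'' is immediate. An isomorphism $\phi:\mathscr{M}_\om\to\mathscr{M}_{\tilde\om}$ of discrete groups induces the dual map $\phi^*:\mathscr{M}_{\tilde\om}^*\to\mathscr{M}_\om^*$, $\chi\mapsto\chi\circ\phi$. This map is a topological group isomorphism, with inverse $(\phi^{-1})^*$. Continuity holds because the compact-open topology on the dual of a discrete group is the topology of pointwise convergence.

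For ``$\Rightarrow$'', suppose $h:\mathcal{O}_\om\to\mathcal{O}_{\tilde\om}$ is a homeomorphism; by Proposition \ref{D0} these are topologically isomorphic to $\mathscr{M}_\om^*$ and $\mathscr{M}_{\tilde\om}^*$. The plan has three steps.

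\begin{enumerate}
\item Show that both $\mathcal{O}_\om$ and $\mathcal{O}_{\tilde\om}$ are compact and connected. Connectedness holds because each is the closure of the continuous image of $\R$ under $t\mapsto\Phi^t(0)$. Equivalently, it follows from the classical fact that the dual of a discrete group is connected exactly when the group is torsion-free, and $\mathscr{M}_\om$ is torsion-free by Remark \ref{cwt}.
\item Translate so that $h(0)=0$, and invoke Scheinberg's theorem \cite{Scheinberg_1974}: for connected locally compact abelian groups, homeomorphic implies topologically isomorphic. This gives a topological group isomorphism $\mathscr{M}_\om^*\cong\mathscr{M}_{\tilde\om}^*$.
\item Dualize once more and apply Pontryagin duality (Theorem \ref{pontryaginthm}):
\begin{equation*}
\mathscr{M}_\om\cong\mathscr{M}_\om^{**}\cong\mathscr{M}_{\tilde\om}^{**}\cong\mathscr{M}_{\tilde\om}.
\end{equation*}
Since these are isomorphisms of discrete groups, they are in particular isomorphisms of abelian groups.
\end{enumerate}

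The main obstacle is step 2: passing from a mere homeomorphism to a group isomorphism. This is genuinely nontrivial, because orbit closures can be solenoids, which are not manifolds, so Lie-theoretic rigidity arguments are unavailable. I would rely entirely on Scheinberg's theorem, checking only that its hypotheses (connected, compact, metrizable abelian groups) are met.

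If a self-contained route were preferred, I would try to recover $\mathscr{M}_\om$ topologically via Čech cohomology, using the classical identification $\check H^1(\mathcal{O}_\om;\Z)\cong [\mathcal{O}_\om,\T]\cong\mathscr{M}_\om$ for compact connected abelian groups. This identification holds because every continuous map to $\T$ is homotopic to a unique character. Since Čech cohomology is a homeomorphism invariant, it would give the result directly. Proving that homotopy-uniqueness statement is the step I expect to cost the most, which is why citing Scheinberg is the first choice.
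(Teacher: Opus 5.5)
Your proposal is correct and follows the paper's proof essentially step for step. You reduce via Proposition \ref{D0} and Remark \ref{orb-Oom} to comparing $\mathscr{M}_\om^*$ and $\mathscr{M}_{\tilde\om}^*$, use Scheinberg's theorem to upgrade a homeomorphism of these connected compact abelian groups to a topological isomorphism, and return to the frequency modules by Pontryagin duality. Your explicit connectedness check, which Scheinberg's theorem needs, is a step the paper leaves implicit, and the Čech-cohomology route you sketch is a valid optional substitute for that citation.
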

	\begin{proof}
		By Remark \ref{orb-Oom}, it is sufficient to prove the statement in the case of orbits through the origin. In \cite{Scheinberg_1974} it is proved that two connected, locally compact abelian groups are homeomorphic if and only if they are isomorphic as topological groups. Then, the statement of Theorem \ref{E} is equivalent to the following: given two Kronecker flows on the infinite torus, the closures of the orbits through the origin are isomorphic as topological groups if and only if their frequency modules are isomorphic. Let us prove it. 
		
		Let $\tilde{\om},\om \in \R^\N$ and consider the Kronecker flows $(\T^\N,\Phi_\om)$, $(\T^\N,\Phi_{\tilde{\om}})$. If $\mathscr{M}_\om \cong \mathscr{M}_{\tilde{\om}}$, then, by Proposition \ref{D0},
		\begin{equation}
		\mathcal{O}_\om\cong (\mathscr{M}_\om)^* \cong (\mathscr{M}_{\tilde{\om}})^*\cong \mathcal{O}_{\tilde{\om}}.
		\end{equation}
		Now, assume that $\mathcal{O}_\om \cong \mathcal{O}_{\tilde{\om}}$. Then, by Theorem \ref{pontryaginthm} and Proposition \ref{D0},
		\begin{equation}
		\mathscr{M}_\om \cong (\mathscr{M}_\om)^{**} \cong (\mathcal{O}_\om)^*\cong (\mathcal{O}_{\tilde{\om}})^* \cong (\mathscr{M}_{\tilde{\om}})^{**} \cong \mathscr{M}_{\tilde{\om}}.
		\end{equation}
	\end{proof}

We now describe how to associate to each countable abelian group $G$ without torsion a Kronecker flow $(\T^\N,\Phi_\om)$ in such a way that $G$ is isomorphic to $\mathscr{M}_\om$. Any countable abelian group $G$ without torsion embeds canonically into a countable vector space $\mathscr{V}$ over $\Q$. This procedure, briefly explained in Appendix \ref{AAB}, is known in commutative algebra as "localization of an abelian group". Assuming Zorn's lemma, there exists a Hamel (algebraic) basis for $\mathscr{V}$. Thus, there exists a countable set of indices $J$ such that $\mathscr{V}$ is isomorphic to $\Q^{(J)}$. In particular, any element $x \in \mathscr{V}$ is uniquely represented by a family of rational numbers $(x_j)_{j \in J}$ such that $x_j\neq0$ only for finitely many $j \in J$. Now, observe that the map
\begin{equation}
	\textbf{h}:\mathscr{V}\ni(x_j)_{j \in J}\mapsto \sum_{j \in J}\alpha_jx_j\in (\R,+),
\end{equation} 
where $(\alpha_j)_{j \in J}$ is a family of rationally independent real numbers,
is an injective group homomorphism, and so is the composition $\textbf{H}:=\textbf{h}\circ \imath$, where $\imath:G \to \mathscr{V}$ is the injective homomorphism $\imath:G\to \mathscr{V}$ that embeds $G$ into $\mathscr{V}$. Then, by the first isomorphism theorem, $G$ is isomorphic to the image of $\textbf{H}$ in $(\R,+)$. Finally, we define the list of real numbers $\om \in \R^\N$ by taking an arbitrary enumeration of the image of $\textbf{H}$. By construction, $\mathscr{M}_\om\cong G$. 
 
 Then, this argument, together with Theorem \ref{E}, implies that completely classifying Kronecker flows on $\T^\N$ up to isomorphism of their orbits through the origin, is a highly complex issue, as it is equivalent to the classification of countable abelian groups without torsion. To date, a complete classification of such groups is known only for the rank $1$ case, i.e., subgroups of $(\Q,+)$ (for references, see \cite{baer}, \cite{beau} and \cite{Fuchs2015})\footnote{For the general theory of infinite abelian groups we refer to Kaplansky's book \cite{kapl}.},  which corresponds to the class of Kronecker flows (constructed in Section \ref{orbit-dual}) whose orbits are dense in a product of circles and solenoids. 
 \bigskip
 
 \noindent \textbf{Acknowledgements.} The Author acknowledges the support of the project “Stable and unstable phenomena in propagation of Waves in
 dispersive media” of INdAM-GNAMPA.
 
 The Author is deeply grateful to Jean-Pierre Marco for his invaluable guidance and for the many insightful discussions during various visits to Sorbonne Université; his ideas were a primary source of inspiration for this work. Special thanks go to Claudio Procesi for his helpful suggestions and for sharing his profound expertise in algebra, which proved to be fundamental to the development of this paper. Finally, the Author would like to thank Michela Procesi and Jessica Elisa Massetti for their constant support and for the numerous stimulating discussions that greatly contributed to this work.

\appendix
\section{Product Topology}\label{AAA}
\begin{defn}[Product Topology] Let $(X_k,\tau_k)_{k \in \mathbb{N}}$ be a family of topological spaces and $X:=\prod_{k\in \mathbb{N}}X_k$. The product topology $\tau_{prod}$ on $X$ is the coarsest topology that makes all the projections $q_k: X \to X_k$ continuous. It is then clear that a basis for the product topology consists of all sets obtained as finite intersections of sets of the form $q_k^{-1}(A_k)$, where $A_k$ is open in $X_k$. An open cylinder is a subset of $X$ of the form $\prod_{k\in \mathbb{N}}A_k$, where, for all $k \in \mathbb{N}$, $A_k \in \tau_k$, and $A_k \neq X_k$ only for a finite number of $k \in \mathbb{N}$. Therefore, the product topology on $X$ is generated by the open cylinders, i.e., $U \in \tau_{prod}$ if and only if $U$ is an arbitrary union of open cylinders. 
\end{defn} 

\begin{rmk}
	 Let $(X_k,\tau_k)_{k \in \mathbb{N}}$ be a family of topological spaces and $X:=\prod_{k\in \mathbb{N}}X_k$. If, for any $k \in \N$, $(X_k,\tau_k)$ is Hausdorff, then, $(X,\tau_{prod})$ is Hausdorff.
\end{rmk}

\begin{prop}[Convergence]\label{conv} A sequence $(x^n)_{n \in \mathbb{N}}$ in $X$ converges if and only if, for every $k \in \mathbb{N}$, $(x^n_k)_{n \in \mathbb{N}}$ converges in $X_k$. 
	\begin{proof} Since all the projections $q_k: X \to X_k$ are continuous, if $(x^n)_{n \in \mathbb{N}}$ converges to $x$ in $X$, then $(x_k^n)_{n \in \mathbb{N}}$ converges to $x_k$ in $X_k$ for each $k \in \mathbb{N}$. Conversely, let $x \in X$ and consider a sequence $(x^n)_{n \in \mathbb{N}}$ such that, $\forall k \in \mathbb{N}$, the sequence $(x^n_k)_{n \in \mathbb{N}}$ converges to $x_k$ in $X_k$. Since any open set is union of open cylinders, it is sufficient to show that, for any cyclinder containing $x$, $\exists N \in \mathbb{N}$ such that $x^n \in C$ $\forall n > N$. So, let $C=\prod_{k \in \mathbb{N}}A_k$ be an open cylinder such that, for all $k \in \mathbb{N}$, $x_k \in A_k$. Of course, there exists $K \in \mathbb{N}$ such that $A_k=X_k$ for all $k>K$. Now, by hypothesis, for any $k \in \mathbb{N}$ there is $N(A_k)$ such that $x^n_k \in A_k$ for all $n > N(A_k)$. But then, it is sufficient to take $N:=\sup_{k\leq K}N(A_k)$.
	\end{proof}
\end{prop}

\begin{rmk}  
	Let $(X,d)$ be a metric space. The topology generated by the metric $d$ coincides with the topology generated by $d':=d/(1+d)$. In particular, if the goal is to study only the topological structure of a metric space with metric $d$, one can always consider the metric $d'$, which takes values in $[0,1)$.  
	\end{rmk}  
Let $(X_k,d_k)_{k \in \mathbb{N}}$ be a family of metric spaces such that, for every $k \in \mathbb{N}$,  
$d_k:X_k\times X_k \to [0,1) \ .$ For every $k \in \mathbb{N}$, let $\tau_k$ be the topology on $X_k$ generated by the metric $d_k$. Let $(\rho_k)_{k \in \mathbb{N}}$ be a sequence of real numbers such that, $\forall k \in \mathbb{N}$, $\rho_k>0$, and $\sum_{k \in \mathbb{N}} \rho_k < \infty$. We define the metric on $X:=\prod_{k\in \mathbb{N}}X_k$  
$$d_{\rho}:X\times X \to [0,\infty) \ , \ \ \  d_\rho(x,y):=\sum_{k \in \mathbb{N}} \rho_k d_{k}(x_k,y_k)$$  
for every $x=(x_k)_{k \in \mathbb{N}}, y=(y_k)_{k \in \mathbb{N}}\in X$.

\begin{thm}\label{metricaprod}
	The topology on $X$ generated by $d_\rho$ coincides with $\tau_{prod}$.
	\begin{proof}
		Let $C=\prod_{k\in \mathbb{N}}A_k \in \tau_{prod}$ be an open cylinder such that each $A_k$ is different from the empty set, and take an arbitrary $x=(x_k)_{k\in\mathbb{N}} \in C$. We show that there exists $\epsilon>0$ such that the $d_\rho$-ball of radius $\epsilon$ centered at $x$, i.e., $B_{\epsilon}(x,d_\rho)$, is entirely contained in $C$. Now, for every $k \in \mathbb{N}$, there exists $\epsilon_k>0$ such that $B_{\epsilon_k}(x_k,d_k)\subset A_k$, because $A_k$ belongs to $\tau_k$, which is the topology generated by the metric $d_k$. Let $\epsilon:= \inf \{ \ \rho_k\epsilon_k \ : \ k\in\mathbb{N} \ \text{such that} \ A_k\neq X_k \}$. Note that $\epsilon>0$ because $A_k\neq X_k$ only for a finite number of $k \in \mathbb{N}$. Now, consider a generic $y \in B_{\epsilon}(x,d_\rho)$. For every $k$ such that $A_k \neq X_k$, we have that $\rho_k d_k(x_k,y_k)\leq d_\rho(x,y) <\epsilon \leq \rho_k \epsilon_k$ (by construction). That is, $d_k(x_k,y_k)<\epsilon_k$. In particular $y_k \in A_k$. If instead $k$ is such that $A_k=X_k$, there is nothing to check. Therefore, $B_\epsilon(x,d_\rho)\subset C$.
		
		Now, we show that, given $x \in X$ and $\epsilon > 0$, there exists an open cylinder $C = \prod_{k \in \mathbb{N}} A_k \in \tau_{prod}$ containing $x$ such that $C \subset B_{\epsilon}(x, d_\rho)$. 
		
		Let $\Lambda \in \mathbb{N}$ be such that 
		\[
		\sum_{k > \Lambda}^{\infty} \rho_k < \epsilon/2.
		\] 
		Let $\|\rho\|_{\infty} := \sup_{k \in \mathbb{N}} \rho_k$ and define a family $(A_k)_{k \in \mathbb{N}}$ such that, for every $k \leq \Lambda$, $A_k := B_{\epsilon/(2\|\rho\|_{\infty}\Lambda)}(x_k, d_k)$, and $A_k = X_k$ for all $k > \Lambda$. Clearly, $x \in C := \prod_{k \in \mathbb{N}} A_k$. 
		
		To show that the open cylinder $C$ is entirely contained in $B_{\epsilon}(x, d_\rho)$, consider an arbitrary $y \in C$. By construction, for every $k \leq \Lambda$, we have $d_k(x_k, y_k) < \epsilon / (2\|\rho\|_{\infty}\Lambda)$. It follows that
		\begin{gather*}
		d_\rho(x,y) = \sum_{k \in \mathbb{N}} \rho_k d_{k}(x_k, y_k) = \sum_{k=1}^{\Lambda} \rho_k d_{k}(x_k, y_k) + \sum_{k > \Lambda}^{\infty} \rho_k d_{k}(x_k, y_k) \\
		\leq \|\rho\|_{\infty} \sum_{k=1}^{\Lambda} d_{k}(x_k, y_k) + \sum_{k > \Lambda}^{\infty} \rho_k 
		< \|\rho\|_{\infty} \Lambda \dfrac{\epsilon}{2\|\rho\|_{\infty} \Lambda} + \dfrac{\epsilon}{2} = \epsilon,
		\end{gather*}
		thereby establishing the claim.
	\end{proof}
\end{thm}

\section{Realizations of $\mathbb{T}^\mathbb{N}$ into Sequence Spaces}\label{AAC}

We explore here some specific settings where the infinite torus $\mathbb{T}^\mathbb{N}$ endowed with the product topology arises. To this end, let us first define the sequence spaces that will provide the framework for our constructions. 

For $s \in \mathbb{R}$, we denote by $h^s$ the Sobolev-Hilbert  space of sequences $u=(u_k)_{k \in \mathbb{N}} \in \mathbb{C}^\mathbb{N}$ such that $\sum_{k \in \mathbb{N}} k^{2s} |u_k|^2 < \infty$, endowed with its natural norm topology. 

Furthermore, given a sequence of positive real numbers $\eta = (\eta_k)_{k \in \mathbb{N}}$, we define the weighted space 
\begin{equation*}
\ell^\infty_\eta:=\left\{u=(u_k)_{k \in \mathbb{N}} \in \mathbb{C}^\mathbb{N} \; : \; \sup_{k \in \mathbb{N}}\eta_k|u_k|<\infty \right\},
\end{equation*}
which can be endowed either with the norm topology or with the weak-* topology. 

To define the latter, we introduce the pre-dual space $\ell^1_{1/\eta}$ as the Banach space of sequences $\phi = (\phi_k)_{k \in \mathbb{N}} \in \mathbb{C}^\mathbb{N}$ such that
\begin{equation*}
\|\phi\|_{\ell^1_{1/\eta}} := \sum_{k \in \mathbb{N}} \eta_k^{-1} |\phi_k| < \infty.
\end{equation*}
The weak-* topology on $\ell^\infty_\eta$ is then the coarsest topology such that the linear functionals 
\begin{equation*}
\ell^\infty_\eta \ni u \mapsto \langle u, \phi \rangle_{\ell^\infty_\eta,\ell^{1}_{1/\eta}} := \sum_{k \in \mathbb{N}} u_k \phi_k \in \mathbb{C}
\end{equation*}
are continuous for every $\phi \in \ell^1_{1/\eta}$. 

In what follows, we construct suitable topological embeddings of $\mathbb{T}^\mathbb{N}$ into these spaces, considering three distinct cases:

\begin{enumerate}[label=(\roman*)]
	\item the $h^s$ case with norm topology;
	\item the $\ell^\infty_\eta$ case with weak-* topology; 
	\item the $\ell^\infty_\eta$ case with norm topology.
\end{enumerate}

More specifically, let $\xi=(\xi_k)_{k \in \mathbb{N}}$ be a sequence of non-negative real numbers with $\xi_k > 0$ for infinitely many $k \in \mathbb{N}$. We consider the set
\begin{equation}\label{Txi_def}
\mathcal{T}_{\xi} := \left\{ u=(u_k)_{k \in \mathbb{N}} \in \mathbb{C}^\mathbb{N} \; : \; |u_k|^2 = \xi_k, \ \forall k \in \mathbb{N} \right\},
\end{equation}
under the assumption that $\xi$ satisfies suitable decay conditions such that $\mathcal{T}_\xi \subset h^s$ or $\mathcal{T}_\xi \subset \ell^\infty_\eta$, depending on the case. 

In this appendix, we show that:

\begin{itemize}
	\item In Cases (i) and (ii), $\mathcal{T}_\xi$ is always a realization of $\mathbb{T}^\mathbb{N}$;
	\item In Case (iii), $\mathcal{T}_\xi$ is a realization of $\mathbb{T}^\mathbb{N}$ if and only if $\lim_{k \to \infty} \eta_k^2 \xi_k = 0$. 
\end{itemize}

It is worth mentioning that if the sequence of radii is eventually bounded away from zero on the support of $\xi$, namely if $\inf_{k \in S} \eta_k^2 \xi_k > 0$ where $S = \text{supp}(\xi)$, then $\mathcal{T}_\xi$ would instead inherit the structure of a non-compact Banach manifold modeled on $\ell^\infty$.

To simplify the technical discussion, we observe that it is always possible to restrict our analysis to the following normalized setting:
\begin{equation}\label{normalization}
s=0, \quad \eta_k = 1, \quad \text{and} \quad \xi_k > 0 \quad \forall k \in \mathbb{N}.
\end{equation}
Indeed, the isometric isomorphisms
\begin{align}
h^s \ni u &\mapsto v:=(k^s u_k)_{k \in \mathbb{N}} \in \ell^2, \label{map1} \\
\ell^\infty_\eta \ni u &\mapsto v:=(\eta_k u_k)_{k \in \mathbb{N}} \in \ell^\infty, \label{map2}
\end{align}
are homeomorphisms with respect to the strong topologies and, in the case of \eqref{map2}, also with respect to the weak-* topology. 

\begin{rmk}
	The map \eqref{map2} is a weak-* homeomorphism as it is the adjoint of the isometric isomorphism between the pre-dual spaces $f: \ell^1 \to \ell^1_{1/\eta}$ defined by $(f(\phi))_k = \eta_k \phi_k$ for all $\phi \in \ell^1$ and $k \in \mathbb{N}$. Indeed, for any $u \in \ell^\infty_\eta$ and $\phi \in \ell^1$, the duality pairing satisfies
	\begin{equation*}
	\langle (\eta_k u_k), \phi \rangle_{\ell^\infty, \ell^1} = \sum_{k \in \mathbb{N}} \eta_k u_k \phi_k = \sum_{k \in \mathbb{N}} u_k (\eta_k \phi_k) = \langle u, f(\phi) \rangle_{\ell^\infty_\eta, \ell^1_{1/\eta}}.
	\end{equation*}
	Since $f$ is a topological isomorphism between the pre-duals, its adjoint is a homeomorphism between the dual spaces endowed with their weak-* topologies.
\end{rmk}

Furthermore, let $S := \text{supp}(\xi)$ be the set of indices where $\xi_k > 0$. For any $k \notin S$, the condition $|u_k|^2 = \xi_k = 0$ implies $u_k = 0$. Consequently, the projection onto the coordinates in $S$,
\begin{equation*}
\Pi_S: \mathcal{T}_\xi \to \mathbb{C}^S,
\end{equation*}
is a bijection onto its image and restricts to a homeomorphism in all the topological settings considered. Since $S$ is infinite, we can identify $S$ with $\mathbb{N}$ through a monotonic reindexing, which preserves both the decay properties of the sequences and the respective topologies. By virtue of these identifications, in the following subsections we shall assume the conditions in \eqref{normalization} to hold without loss of generality.

\subsection{Embedding into $\ell^2$}

In this case, we assume $\xi \in \ell^1$, which ensures $\mathcal{T}_\xi \subset \ell^2$. We consider the map $\mathcal{J}: \mathbb{T}^\mathbb{N} \to \ell^2$ defined by
\begin{equation}
\mathcal{J}(\theta) := \left( \sqrt{\xi_k} e^{\im \theta_k} \right)_{k \in \mathbb{N}},
\end{equation}
which is a bijection between the infinite torus $\mathbb{T}^\mathbb{N}$ and $\mathcal{T}_\xi$. To show that $\mathcal{J}$ is a homeomorphism, we compare the product topology on $\mathbb{T}^\mathbb{N}$ with the topology induced by the $\ell^2$-norm on $\mathcal{T}_\xi$. 

Recall that the product topology on $\mathbb{T}^\mathbb{N}$ is metrizable; a suitable metric is given by
\begin{equation}\label{dist_T}
d_{\mathbb{T}^\mathbb{N}}(\theta, \varphi) := \sum_{k \in \mathbb{N}} \xi_k \left| e^{\im \theta_k} - e^{\im \varphi_k} \right|,
\end{equation}
where the weights $\xi_k$ ensure the convergence of the series. On the other hand, the $\ell^2$-distance between $u = \mathcal{J}(\theta)$ and $v = \mathcal{J}(\varphi)$ in $\mathcal{T}_\xi$ is
\begin{equation}\label{dist_l2}
d_{\ell^2}(u, v) = \|u - v\|_{\ell^2} = \sqrt{\sum_{k \in \mathbb{N}} \xi_k \left| e^{\im \theta_k} - e^{\im \varphi_k} \right|^2}.
\end{equation}

We first prove the continuity of $\mathcal{J}$. For any $\epsilon > 0$, we seek $\delta > 0$ such that $d_{\mathbb{T}^\mathbb{N}}(\theta, \varphi) < \delta$ implies $d_{\ell^2}(\mathcal{J}(\theta), \mathcal{J}(\varphi)) < \epsilon$. Since $|e^{\im \theta_k} - e^{\im \varphi_k}| \leq 2$, we have
\begin{equation*}
d_{\ell^2}(\mathcal{J}(\theta), \mathcal{J}(\varphi))^2 = \sum_{k \in \mathbb{N}} \xi_k \left| e^{\im \theta_k} - e^{\im \varphi_k} \right|^2 \leq 2 \sum_{k \in \mathbb{N}} \xi_k \left| e^{\im \theta_k} - e^{\im \varphi_k} \right| = 2 d_{\mathbb{T}^\mathbb{N}}(\theta, \varphi).
\end{equation*}
Thus, it suffices to choose $\delta = \epsilon^2/2$.

Conversely, we show that $\mathcal{J}^{-1}$ is continuous. For a fixed $\epsilon > 0$, let $N \in \mathbb{N}$ be large enough such that $2 \sum_{k > N} \xi_k < \epsilon/2$. Then, for any $\theta, \varphi \in \mathbb{T}^\mathbb{N}$,
\begin{align*}
d_{\mathbb{T}^\mathbb{N}}(\theta, \varphi) &< \sum_{k=1}^{N} \xi_k \left| e^{\im \theta_k} - e^{\im \varphi_k} \right| + \frac{\epsilon}{2} \\
&\leq \sqrt{\sum_{k=1}^{N} \xi_k} \sqrt{\sum_{k=1}^{N} \xi_k \left| e^{\im \theta_k} - e^{\im \varphi_k} \right|^2} + \frac{\epsilon}{2} \\
&\leq \sqrt{\|\xi\|_{\ell^1}} \, d_{\ell^2}(\mathcal{J}(\theta), \mathcal{J}(\varphi)) + \frac{\epsilon}{2},
\end{align*}
where we applied the Cauchy-Schwarz inequality to the first $N$ terms. By choosing $\delta = \epsilon / (2\sqrt{\|\xi\|_{\ell^1}})$, we obtain that $d_{\ell^2}(u, v) < \delta$ implies $d_{\mathbb{T}^\mathbb{N}}(\theta, \varphi) < \epsilon$.

\subsection{Embedding into $\ell^\infty$ with the Weak-* Topology}

We consider the realization of the infinite torus $\mathbb{T}^\mathbb{N}$ as the subset $\mathcal{T}_\xi$ of the Banach space $\ell^\infty$ endowed with the weak-* topology. To ensure that $\mathcal{T}_\xi \subset \ell^\infty$, we assume that the sequence of radii satisfies $\sup_{k \in \mathbb{N}} \xi_k < \infty$. 

First, let us characterize the topology of $\mathbb{T}^\mathbb{N}$, which is defined as the product of circles $\mathbb{T} \cong \mathbb{R}/2\pi\mathbb{Z}$. Since each factor $\mathbb{T}$ is homeomorphic to the unit circle $U(1) \subset \mathbb{C}$ via the map $\theta \mapsto e^{\im \theta}$, the product topology on $\mathbb{T}^\mathbb{N}$ is equivalent to the product topology on $U(1)^\mathbb{N}$. This topology is induced by any metric of the form 
\begin{equation}\label{dist_T_U1}
d_{\mathbb{T}^\mathbb{N}}(\theta, \varphi) = \sum_{k \in \mathbb{N}} \rho_k |e^{\im \theta_k} - e^{\im \varphi_k}|,
\end{equation}
where $(\rho_k)_{k \in \mathbb{N}}$ is a summable sequence of positive weights.

Next, we describe the topology on the target space. The weak-* topology on $\ell^\infty$ is the coarsest topology making the linear functionals $u \mapsto \sum_{k \in \mathbb{N}} u_k \phi_k$ continuous for all $\phi \in \ell^1$. A neighborhood of a point $u \in \ell^\infty$ in this topology is given by a finite intersection of sets of the form
\begin{equation}\label{w_star_neigh}
V_{\phi, \epsilon}(u) := \left\{ v \in \ell^\infty \; : \; \left| \sum_{k \in \mathbb{N}} (u_k - v_k) \phi_k \right| < \epsilon \right\}, \quad \phi \in \ell^1, \ \epsilon > 0.
\end{equation}

While the weak-* topology is not metrizable on the whole $\ell^\infty$, its restriction to any bounded set is metrizable. Since $\mathcal{T}_\xi$ is bounded, we can consider a ball $B_R := \{u \in \ell^\infty : \|u\|_{\ell^\infty} \leq R\}$ with $R > \sqrt{\|\xi\|_{\ell^\infty}}$ and define on it the metric
\begin{equation}\label{dist_wstar_def}
d_{w^*}(u, v) := \sum_{k \in \mathbb{N}} 2^{-k} |u_k - v_k|.
\end{equation}

To show that $d_{w^*}$ induces the weak-* topology on $B_R$, we verify that the two neighborhood systems generate the same topology:

\begin{enumerate}
	\item Any neighborhood $V_{\phi, \epsilon}(u)$ contains a metric ball $B_{d_{w^*}}(u, \delta)$. Indeed, for a fixed $\phi \in \ell^1$, there exists $N \in \mathbb{N}$ such that $\sum_{k > N} |\phi_k| < \epsilon / (4R)$. For any $v \in B_R$ such that $d_{w^*}(u, v) < \delta$, the definition of the metric implies $2^{-k}|u_k - v_k| < \delta$ for all $k \in \mathbb{N}$. In particular, for all $k \leq N$, we have the uniform bound $|u_k - v_k| < 2^N \delta$. By choosing $\delta < \epsilon / (2 \cdot 2^N \|\phi\|_{\ell^1})$, we ensure that
	\begin{equation*}
	\left| \sum_{k \in \mathbb{N}} (u_k - v_k) \phi_k \right| \leq \sum_{k=1}^N |u_k - v_k| |\phi_k| + 2R \sum_{k > N} |\phi_k| < 2^N \delta \sum_{k=1}^N |\phi_k| + \frac{\epsilon}{2} < \epsilon.
	\end{equation*}
	
	\item Any metric ball $B_{d_{w^*}}(u, \epsilon)$ contains a weak-* neighborhood. To see this, we first observe that for any $u, v \in B_R$, the distance between coordinates is uniformly bounded by $|u_k - v_k| \leq 2R$. We then choose an index $N \in \mathbb{N}$ such that
	\begin{equation*}
	\sum_{k = N+1}^{\infty} 2^{-k} (2R) < \frac{\epsilon}{2}.
	\end{equation*}
	Now, let $A = \{e_1, \dots, e_N\} \subset \ell^1$ be the finite set of functionals from the canonical basis, and consider the weak-* neighborhood 
	\begin{equation*}
	U_{A, \epsilon/2}(u) := \left\{ v \in B_R \; : \; |u_k - v_k| < \frac{\epsilon}{2}, \ \forall k=1, \dots, N \right\}.
	\end{equation*}
	For any $v \in U_{A, \epsilon/2}(u)$, the metric distance $d_{w^*}(u, v)$ satisfies:
	\begin{align*}
	d_{w^*}(u, v) &= \sum_{k=1}^N 2^{-k} |u_k - v_k| + \sum_{k = N+1}^{\infty} 2^{-k} |u_k - v_k| \\
	&< \frac{\epsilon}{2} \left( \sum_{k=1}^N 2^{-k} \right) + \sum_{k = N+1}^{\infty} 2^{-k} (2R) < \epsilon.
	\end{align*}
	This proves the inclusion $U_{A, \epsilon/2}(u) \subset B_{d_{w^*}}(u, \epsilon)$.
\end{enumerate}

Finally, we consider the map $\mathcal{J}: \mathbb{T}^\mathbb{N} \to \mathcal{T}_\xi$ defined by the coordinate-wise assignment
\begin{equation*}
\mathcal{J}(\theta) := \left( \sqrt{\xi_k} e^{\im \theta_k} \right)_{k \in \mathbb{N}}.
\end{equation*}
By construction, $\mathcal{J}$ is a bijection. To show it is a homeomorphism, we evaluate the metric $d_{w^*}$ restricted to the set $\mathcal{T}_\xi$. For any two points $u = \mathcal{J}(\theta)$ and $v = \mathcal{J}(\varphi)$ in $\mathcal{T}_\xi$, the distance defined in \eqref{dist_wstar_def} reads
\begin{equation*}
d_{w^*}(u, v) = \sum_{k \in \mathbb{N}} 2^{-k} \left| \sqrt{\xi_k} e^{\im \theta_k} - \sqrt{\xi_k} e^{\im \varphi_k} \right| = \sum_{k \in \mathbb{N}} \left( 2^{-k} \sqrt{\xi_k} \right) \left| e^{\im \theta_k} - e^{\im \varphi_k} \right|.
\end{equation*}
By identifying the weights in the product metric \eqref{dist_T_U1} as $\rho_k := 2^{-k} \sqrt{\xi_k}$, which are strictly positive and summable since $\sup_k \xi_k < \infty$, the map $\mathcal{J}$ becomes an isometry between the metric spaces $(\mathbb{T}^\mathbb{N}, d_{\mathbb{T}^\mathbb{N}})$ and $(\mathcal{T}_\xi, d_{w^*})$. 

Since we have already established that $d_{w^*}$ induces the weak-* topology on bounded sets of $\ell^\infty$, it follows that $\mathcal{J}$ is a homeomorphism between the infinite torus $\mathbb{T}^\mathbb{N}$ and the set $\mathcal{T}_\xi$ endowed with the weak-* topology. 

\subsection{Embedding into $\ell^\infty$ with the Strong Topology}

In this subsection, we consider $\ell^\infty$ endowed with the strong (norm) topology induced by $\|u\|_{\ell^\infty} = \sup_{k \in \mathbb{N}} |u_k|$. We prove that if $\lim_{k \to \infty} \xi_k = 0$, then the map $\mathcal{J}: \mathbb{T}^\mathbb{N} \to \mathcal{T}_\xi$ is a homeomorphism.

To show this, we verify that the product topology on $\mathbb{T}^\mathbb{N}$ and the norm topology on $\mathcal{T}_\xi$ are equivalent through $\mathcal{J}$.

\begin{enumerate}
	\item \textbf{Continuity of $\mathcal{J}$:} Let $\epsilon > 0$ and $\theta \in \mathbb{T}^\mathbb{N}$. Since $\sqrt{\xi_k} \to 0$, there exists an index $N \in \mathbb{N}$ such that $\sqrt{\xi_k} < \epsilon/2$ for all $k > N$. We define a cylinder $U$ in the product topology as:
	\begin{equation*}
	U := \left\{ \varphi \in \mathbb{T}^\mathbb{N} \; : \; |e^{\im \theta_k} - e^{\im \varphi_k}| < \frac{\epsilon}{\max_{j \leq N} \sqrt{\xi_j}}, \quad \forall k = 1, \dots, N \right\}.
	\end{equation*}
	For any $\varphi \in U$, we evaluate the distance $\|\mathcal{J}(\theta) - \mathcal{J}(\varphi)\|_{\ell^\infty}$ by splitting the supremum:
	\begin{itemize}
		\item For $k \leq N$, the definition of $U$ implies:
		\begin{equation*}
		\sqrt{\xi_k} |e^{\im \theta_k} - e^{\im \varphi_k}| < \sqrt{\xi_k} \frac{\epsilon}{\max_{j \leq N} \sqrt{\xi_j}} \leq \epsilon.
		\end{equation*}
		\item For $k > N$, the condition on the radii ensures:
		\begin{equation*}
		\sqrt{\xi_k} |e^{\im \theta_k} - e^{\im \varphi_k}| \leq 2\sqrt{\xi_k} < 2 \cdot \frac{\epsilon}{2} = \epsilon.
		\end{equation*}
	\end{itemize}
	Since the supremum over all $k \in \mathbb{N}$ is less than $\epsilon$, it follows that $\|\mathcal{J}(\theta) - \mathcal{J}(\varphi)\|_{\ell^\infty} < \epsilon$, proving continuity.
	
	\item \textbf{Continuity of $\mathcal{J}^{-1}$:} Let $u = \mathcal{J}(\theta) \in \mathcal{T}_\xi$ and let $C$ be a basic cylinder centered at $\theta$ in $\mathbb{T}^\mathbb{N}$, defined by a finite set of indices $K \subset \mathbb{N}$ and open intervals of width $\delta_k > 0$:
	\begin{equation*}
	C = \{ \varphi \in \mathbb{T}^\mathbb{N} \; : \; |e^{\im \theta_k} - e^{\im \varphi_k}| < \delta_k, \ \forall k \in K \}.
	\end{equation*}
	To prove that $\mathcal{J}^{-1}$ is continuous, we find $\epsilon > 0$ such that $B_{\ell^\infty}(u, \epsilon) \cap \mathcal{T}_\xi \subset \mathcal{J}(C)$. Since $K$ is a finite set and $\sqrt{\xi_k} > 0$ for all $k$, we can define:
	\begin{equation*}
	\epsilon := \min_{k \in K} \left( \sqrt{\xi_k} \delta_k \right) > 0.
	\end{equation*}
	Now, let $v = \mathcal{J}(\varphi) \in \mathcal{T}_\xi$ be any point such that $\|u - v\|_{\ell^\infty} < \epsilon$. By the definition of the supremum norm, for each $k \in K$ we have:
	\begin{equation*}
	\sqrt{\xi_k} |e^{\im \theta_k} - e^{\im \varphi_k}| \leq \sup_{j \in \mathbb{N}} \sqrt{\xi_j} |e^{\im \theta_j} - e^{\im \varphi_j}| = \|u - v\|_{\ell^\infty} < \epsilon.
	\end{equation*}
	Using the definition of $\epsilon$, we observe that $\epsilon \leq \sqrt{\xi_k} \delta_k$ for every $k \in K$. It follows that:
	\begin{equation*}
	\sqrt{\xi_k} |e^{\im \theta_k} - e^{\im \varphi_k}| < \sqrt{\xi_k} \delta_k.
	\end{equation*}
	Dividing by $\sqrt{\xi_k}$, we obtain $|e^{\im \theta_k} - e^{\im \varphi_k}| < \delta_k$ for all $k \in K$, which implies $\varphi \in C$. This confirms that $\mathcal{J}^{-1}$ is continuous.
\end{enumerate}

The condition $\lim_{k \to \infty} \xi_k = 0$ is essential. If it were not satisfied, say $\inf_{k \in \mathbb{N}}\xi_k = c > 0$, a norm-ball of radius $\epsilon < c$ would impose a constraint on an infinite number of angles simultaneously. Such a set cannot contain the image of any cylinder (which only constrains finitely many coordinates), and $\mathcal{T}_\xi$ would behave as a non-compact Banach manifold rather than a compact torus.

\section{Algebraic Background}\label{AAB}

\subsection{Localization of Abelian Groups}
\begin{defn}
	Let $S:=\Z\setminus \{0\}$ and $G$ an abelian group. The localization of $G$ by $S$ is the set of equivalence classes
	\begin{equation*}
	S^{-1}G:=\bigg\{\dfrac{g}{s}, \; g \in G, \; s \in S \; : \; \dfrac{g}{s} = \dfrac{g'}{s'} \Leftrightarrow \; \exists n \in S \; \text{such that} \; n(sg'-s'g)=0 \ \ \text{in} \ G \bigg\}.
	\end{equation*}
\end{defn}

\begin{prop}\label{1}
	$S^{-1}G$ is an abelian group with the operation
	\begin{equation}\label{groupsum}
	\dfrac{g}{s}+\dfrac{h}{r}=\dfrac{rg+sh}{sr}
	\end{equation}
	\begin{proof}
		First, observe that, given $g\in G$ and $s \in S$, for any $m \in S$ we have
		\begin{equation}\label{m}
		\dfrac{g}{s}=\dfrac{mg}{ms}.
		\end{equation}
		Now, let $g,h,g',h'\in G$ and $s,r,s',r'\in S$ be such that
		\begin{equation}
		\dfrac{g}{s}=\dfrac{g'}{s'} \ \ \ \text{and} \ \ \  \dfrac{h}{r}=\dfrac{h'}{r'}.
		\end{equation}
		Then, by definition, there exist $n,m\in S$ such that $n(sg'-s'g)=m(rh'-r'h)=0$. Thus,
		\begin{gather}
		\dfrac{g'}{s'}+\dfrac{h'}{r'}=\dfrac{r'g'+s'h'}{r's'}=\dfrac{nmrsr'g'+nmrss'h'}{nmrsr's'}
		=\dfrac{mr'rnsg'+ns'smrh'}{nmr's'rs}\\=\dfrac{mr'rns'g+ns'smr'h}{nmr's'rs}=\dfrac{rg+sh}{rs}=\dfrac{g}{s}+\dfrac{h}{r}.
		\end{gather}
		So, the group operation \eqref{groupsum} is well defined and, of course, it is associative. The identity element is given by
		\begin{equation}
		\dfrac{0}{1}
		\end{equation}
		and, given $g \in G$, $s \in S$,
		\begin{equation}
		-\dfrac{g}{s}=\dfrac{-g}{s}=\dfrac{g}{-s}.
		\end{equation}
		Finally, the commutativity property of the operation just defined follows trivially.
	\end{proof}
\end{prop}

\begin{defn}
	An abelian group is said to be without torsion if there are no elements $g \in G$, $g \neq 0$, that admit a non-zero integer $n$ such that $ng = 0$.
\end{defn}

\begin{prop}\label{2}
	If $G$ is without torsion, the group homomorphism
	\begin{equation}
	\imath:G \to S^{-1}G, \ \ \ g\mapsto \dfrac{g}{1}
	\end{equation}
	is injective.
	\begin{proof}
		Assume $g/1=0/1$. Then, there exists $n \in S$ such that $n(1\cdot g-1\cdot 0)=ng=0$. The statement follows by the fact that $G$ has not torsion elements.
	\end{proof}
\end{prop}

\begin{prop}\label{3}
	If $G$ is without torsion, $S^{-1}G$ is a $\Q$-vector space.
	\begin{proof}
		Being $S^{-1}G$ an abelian group, and hence a $\Z$-module, it is sufficient to define an appropriate notion of multiplication by rational scalars. For any rational number $\textbf{r} \in \Q$ and any $g\in G$, $s\in S$, we define
		\begin{equation}
		\textbf{r}\dfrac{g}{s}=\dfrac{pg}{qs},
		\end{equation}
		where $p \in \Z$ and $q \in S$ are such that $\textbf{r}=p/q$. By using \eqref{m}, it is very easy to show that this operation is well defined.
	\end{proof}
\end{prop}

\begin{thm}
	Any countable abelian group without torsion can be canonically embedded in a countable vector space over $\Q$. 
	\begin{proof}
		The proof follows by Propositions \ref{1},\ref{2} and \ref{3}, and by the fact that, if $G$ is countable, then $S^{-1}G$ is also countable.
	\end{proof}
\end{thm}

\subsection{Basics of Abelian Groups}

\begin{defn}[Independent Set]
	Let $G$ be an abelian group. Given a set of indices $J$ and a subset $\{g_j\}_{j\in J}$ of $G$, we say that $\{g_j\}_{j \in J}$ is independent if and only if, for any $\nu \in \Z^{(J)}$, $\sum_{j \in J}\nu_jg_j=0$ implies $\nu=0$; otherwise, we say that $\{g_j\}_{j\in J}$ is dependent. 
\end{defn}

\begin{defn}[Maximal Independent Set]
	Let $\{g_j\}_{j \in J}$ be an independent subset of the abelian group $G$. If for any other $g \in G$, $\{g_j\}_{j \in J}\cup \{g\}$ is dependent, then, we say that $\{g_j\}_{j \in J}$ is a maximal independent subset of $G$. If we allow the group to have an infinite number of independent elements, then we have to assume the axiom of choice for maximal independent sets to be well defined.
\end{defn}

\begin{prop}\label{samecard}
	Let $G$ be an abelian group without torsion. Then, all its maximal independent subsets have the same cardinality.
	\begin{proof}
		We show that, for any maximal independent subset $\{g_j\}_{j \in J}\subset G$, $\{g_j/1\}_{j \in J}$ is a basis for $S^{-1}G$. Then, the fact that, for a vector space, any basis has the same cardinality, yields to the conclusion. So, let $\{g_j\}_{j \in J}\subset G$ a maximal independent subset. First, observe that, for any $g/s \in S^{-1}G$, $g\neq 0$, there is a finite family of rational numbers $(r_i)_{i \in I}$, $I\subseteq J$, such that
		\begin{equation}
			\dfrac{g}{s}=\sum_{i \in I}r_i\dfrac{g_i}{1}.
		\end{equation}
		Indeed, let $g \neq 0$. Then, there exist $m \in \N$ and $\nu \in \Z^{(J)}$ such that $mg=\sum_{j \in J}\nu_j g_j$. Therefore,
		\begin{equation}
		\dfrac{g}{s}=\dfrac{mg}{ms}=\dfrac{\sum_{j \in J}\nu_j g_j}{ms}=\sum_{j \in J}\dfrac{\nu_j}{ms}g_j.
		\end{equation}
		Furthermore, let $(p_j/q_j)_{j \in I}$, $I\subseteq J$, be a finite family of rational numbers. Then,
		\begin{equation}
			\sum_{j \in I}\dfrac{p_j}{q_j}\dfrac{g_j}{1}=0 \ \Rightarrow \ \sum_{j \in I}p_j\bigg(\prod_{i \neq j}q_i\bigg) g_j=0 \ \Rightarrow \ p_j=0, \ \forall j \in I.
		\end{equation}
		This implies that $\{g_j/1\}_{j \in J}$ is a basis for the $\Q$-vector space $S^{-1}G$. 
		\end{proof}
\end{prop}

\begin{defn}[Rank]
	The rank of $G$ is the cardinality of any maximal independent subset. 
\end{defn}

\begin{prop}\label{rankofq}
	The rank of $(\Q,+)$, and that of any subgroup of it, is equal to $1$.
	\begin{proof}
		For any $a/b,\ c/d \in \Q$, there exist $m,n \in \Z$, not both vanishing, such that $m(a/b)+n(c/d)=0$. In particular,  if both $a\neq0$ and $c\neq 0$, $m=cb, n=-ad$. 
	\end{proof}
\end{prop}

\begin{prop}\label{rankozj}
	The rank of $\ \Z^{(J)} \ $ equals the cardinality of the set of indices $J$.
	\begin{proof}
		It is sufficient to observe that the subset $\{e_j\}_{j \in J}\subset \Z^{(J)}$, is a maximal independent set.	
	\end{proof}
\end{prop}

\begin{thm}\label{isorank}
	Let $\Phi:G\to G'$ be an isomorphism of abelian groups. Then, the rank of $G$ coincides with the rank of $G'$.
	\begin{proof}
		It is sufficient to prove the followings: (i) if $g,h \in G$ are independent, then $\Phi(g)$ and $\Phi(h)$ are independent; (ii) let $\{g_j\}_{j \in J}$ be a maximal independent subset of $G$, then, for any $g'\in G'$ there exist $m \in \Z$ and $\nu \in \Z^{(J)}$ such that $mg'-\sum_{j \in J}\nu_j\Phi(g_j)=0$. 
		
		Concerning $(i)$, let $n,m \in \Z$ be such that $n\Phi(g)+m\Phi(h)=0$. Then, 
		\begin{equation}
		0=\Phi^{-1}(0)=\Phi^{-1}(n\Phi(g)+m\Phi(h))=\Phi^{-1}(\Phi(ng+mh))=ng+mh.
		\end{equation}
		This implies that $n=m=0$ because, by hypothesis, $g$ and $h$ are independent. 
		
		Now, we prove $(ii)$. Let $g'\in G'$ and $g:=\Phi^{-1}(g')$. Since $\{g_j\}_{j \in J}$ is a maximal independent subset of $G$, there exist $m \in \Z$ and $\nu \in \Z^{(J)}$ such that $mg-\sum_{j \in J}\nu_jg_j=0$. Then, 
		\begin{equation}
			0=\Phi(0)=\Phi\bigg(mg-\sum_{j \in J}\nu_jg_j\bigg)=mg'+\sum_{j \in J}\nu_j\Phi(g_j).
		\end{equation}
		The proof is concluded.
	\end{proof}
\end{thm}

\begin{defn}[Free Abelian Group]
	An abelian group $G$ is said to be free if there exists a maximal independent subset $\{g_j\}_{j \in J}$ of $G$ such that, for any $g \in G$, there exists a unique $\nu \in \Z^{(J)}$ such that $g=\sum_{j \in J}\nu_jg_j$. We call such subset "\textit{basis}".
\end{defn}

\begin{cor}\label{freeZ}
	Let $G$ be a free abelian group and $\{g_j\}_{j \in J}$ a basis. Then $G\cong\Z^{(J)}$.
	\begin{proof}
		By definition, $\Z^{(J)} \ni \nu \mapsto \sum_{j \in J}\nu_jg_j$ is injective and surjective.
	\end{proof}
\end{cor}

\subsection{Subgroups of $(\Q,+)$}

\begin{prop}\label{freesubofq1}
	Let $G$ be a free subgroup of $(\Q,+)$. Then, $G$ is isomorphic to $\Z$.
	\begin{proof}
		By Proposition \ref{rankofq}, $G$ has rank $1$, and the only rank $1$ direct sum of copies of $\Z$ is the direct sum involving only one term (see Proposition \ref{rankozj}). Then, the claim follows by Theorem \ref{isorank}.
	\end{proof}
\end{prop}

\begin{cor}\label{freesubofq2}
	A subgroup $G$ of $(\Q,+)$ is free if and only if there exists $r \in \Q$ such that for any $g \in G$ we have $g=nr$ for some $n \in \Z$.
	\begin{proof}
		Let $\Phi:\Z\to G$ be an isomorphism. Define $r=\Phi(1)$. Then, 
		\begin{equation}
		G=\Phi(\Z)=\{\Phi(n) \; : \; n\in\Z \}=\{n\Phi(1) \; : \; n\in\Z \}=\{nr \; : \; n\in\Z \}.
		\end{equation}
		The claim follows by Proposition \ref{freesubofq1}.
	\end{proof}
\end{cor}

	\begin{proof}[Proof of Lemma \ref{notfree}]\label{notfreeA}
		Assume by contradiction that $G$ is free. By Corollary \ref{freesubofq2}, $G=r\Z$ for some $r \in \Q$. This prevents the existence of such a sequence.
	\end{proof}

\begin{thm}[\cite{baer}\cite{beau}\cite{Fuchs2015}]\label{b}
	Let $\Lambda \in \prod_{j \in \N}\N_0 \cup \{\infty\}$, $(\textbf{p}_j)_{j \in \N}$ the list of all prime numbers in ascending order, and $i$ a natural number that is not divisible by $\textbf{p}_j$ if $\Lambda_j>0$. Then, non-trivial subgroups of the additive rationals are exactly all those groups of the form
	\begin{equation}
	S(i,\Lambda):=\bigg\{\dfrac{n i}{\prod_{j \in \N}\textbf{p}_j^{\lambda_j}} \; : \; \lambda \in \bigoplus_{j \in \N}\N_0 \cap \{0,...,\Lambda_j\}, \ n \in \Z \bigg\} \ ,
	\end{equation}
	Moreover, $S(i,\Lambda)\cong S(i',\Lambda')$ if and only if $\Lambda_j=\Lambda'_j$ for almost all $j$, and, whenever they are different, both are finite.
\end{thm}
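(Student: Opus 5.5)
This is the Baer--Beaumont--Fuchs classification of rank one torsion-free groups, and I would prove it via types, using only elementary arithmetic in $\mathbb{Q}$. The argument has three parts: every $S(i,\Lambda)$ is a subgroup, every nontrivial subgroup $G\subseteq\Q$ has this form, and the isomorphism criterion.

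\textbf{$S(i,\Lambda)$ is a subgroup.} For each $j$ the set of admissible exponents $\{0,\dots,\Lambda_j\}$ is downward closed. Hence the sum of $ni/\prod \textbf{p}_j^{\lambda_j}$ and $n'i/\prod \textbf{p}_j^{\lambda'_j}$ can be put over the common denominator $\prod \textbf{p}_j^{\max(\lambda_j,\lambda'_j)}$, which is again admissible. So $S(i,\Lambda)$ is closed under addition; closure under negation is clear.

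\textbf{Every nontrivial $G$ has this form.} Fix $G\neq 0$. Since every element of $G$ is a rational multiple of any fixed nonzero element, I would first show that
\[
\bigl\{\text{denominators } d \text{ such that } 1/d \in \Z\text{-multiples reachable}\bigr\}
\]
is well behaved, as follows. Choose $g_0=i\in G$ with $i\in\N$ (clear denominators; this is possible because $\Z g\cap\N$ is nonempty for $g\neq0$), chosen minimal among positive integers lying in $G$. Define $\Lambda_j:=\sup\{\lambda : i/\textbf{p}_j^{\lambda}\in G\}\in\N_0\cup\{\infty\}$.

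\emph{Key closure step.} If $i/a\in G$ and $i/b\in G$ with $\gcd(a,b)=1$, then Bézout ($ua+vb=1$) gives $i/(ab)=u\,i/b+v\,i/a\in G$. So $G$ contains $i/\prod \textbf{p}_j^{\lambda_j}$ for every admissible finite $\lambda$, i.e.\ $S(i,\Lambda)\subseteq G$.

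\emph{Reverse inclusion.} Write $x\in G$ as $x=ci/d$ in lowest terms, i.e.\ $\gcd(c,d)=1$; this is possible since $x/i\in\Q$. By Bézout again, $i/d\in G$, and likewise $\gcd(c,\cdot)$ considerations yield $\gcd(c,\dots)$-type integer multiples. In particular $i/\textbf{p}_j^{v_j(d)}\in G$, so $v_j(d)\le\Lambda_j$ and $x\in S(i,\Lambda)$.

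\emph{Condition on $i$.} If $\textbf{p}_j\mid i$ and $\Lambda_j>0$, then $i/\textbf{p}_j$ is a smaller positive integer in $G$, contradicting the minimality of $i$. This is exactly the stated constraint on $i$.

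\textbf{Isomorphism criterion.} Any isomorphism between subgroups of $\Q$ is multiplication by a nonzero rational $q$. This holds because a homomorphism $\phi$ satisfies $\phi(ng)=n\phi(g)$, so $\phi(g)/g$ is constant by rank one (Proposition \ref{rankofq}).

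\emph{Invariant.} I would use the characteristic of an element, $\chi_j(x)=\sup\{k: x/\textbf{p}_j^k\in G\}$. Multiplication by $q$ shifts each $\chi_j$ by $v_j(q)$, and this shift is $0$ for almost all $j$. Moreover $\chi_j(i)=\Lambda_j$, and the characteristics of two elements differ in finitely many finite places.

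\emph{Necessity.} Isomorphic groups have $\Lambda$, $\Lambda'$ differing only at finitely many $j$, with both values finite there, since $\infty$ is preserved under finite shifts.

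\emph{Sufficiency.} Given such $\Lambda$, $\Lambda'$, set $q=(i'/i)\prod_j \textbf{p}_j^{\Lambda_j-\Lambda'_j}$, the product running over the finitely many differing $j$. Then check $qS(i,\Lambda)=S(i',\Lambda')$ by comparing denominators prime by prime.

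\textbf{Main obstacle.} The delicate step is the reverse inclusion: showing that each $x\in G$ has denominator relative to $i$ bounded prime by prime by $\Lambda_j$. Handling the numerator $c$ requires care that $i/d$ itself lies in $G$, which the coprimality $\gcd(c,d)=1$ and Bézout supply. The sufficiency direction also needs bookkeeping of the factor $i'/i$ at primes where $\Lambda_j=\infty$, since $i$ or $i'$ may carry those primes.
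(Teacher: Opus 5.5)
Your argument is correct. Note that the paper gives no proof of this statement: it is imported from \cite{baer}, \cite{beau}, \cite{Fuchs2015} and used as a black box in the proof of Lemma \ref{modsol}. What you supply is the classical self-contained proof via characteristics (Baer's types):
\begin{enumerate}
\item take $i$ the least positive integer in $G$, and let $\Lambda_j$ be the $\textbf{p}_j$-height of $i$;
\item obtain $S(i,\Lambda)\subseteq G$ by B\'ezout across coprime prime powers;
\item obtain $G\subseteq S(i,\Lambda)$ by B\'ezout applied to $x=ci/d$ with $\gcd(c,d)=1$;
\item classify using the fact that every isomorphism between subgroups of $\Q$ is multiplication by a rational.
\end{enumerate}
Compared with the citation, this costs some space but makes the algebraic appendix self-contained. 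It also makes explicit two facts that the proof of Lemma \ref{modsol} uses implicitly. The first is $S(i,\Lambda)=i\,S(1,\Lambda)$, so $i$ plays no role up to isomorphism. The second is
\[
S(1,\Lambda)=\{x\in\Q \; : \; v_j(x)\geq -\Lambda_j \ \text{for all } j\in\N\},
\]
where $v_j$ denotes the $\textbf{p}_j$-adic valuation.

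A few places to tighten.
\begin{itemize}
\item For $S(i,\Lambda)\subseteq G$ you need that $\{\lambda : i/\textbf{p}_j^{\lambda}\in G\}$ is downward closed (multiply by $\textbf{p}_j$). You also need that its supremum is attained when finite. Together these give $i/\textbf{p}_j^{\lambda_j}\in G$ for every $\lambda_j\leq\Lambda_j$.
\item In the invariant step, the phrase ``multiplication by $q$ shifts each $\chi_j$ by $v_j(q)$'' mixes two different facts. An isomorphism $x\mapsto qx$ from $G$ onto $G'$ preserves characteristics exactly: $\chi^{G'}_j(qx)=\chi^{G}_j(x)$. By contrast, for two elements $x$ and $rx$ of the same group one has $\chi_j(rx)=\chi_j(x)+v_j(r)$. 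The latter is immediate from $\chi_j(x)=v_j(x)-v_j(i)+\Lambda_j$ in $S(i,\Lambda)$, with $\infty$ absorbing.
\item Necessity then reads as follows. The values $\chi^{S'}_j(qi)=\Lambda_j$ and $\chi^{S'}_j(i')=\Lambda'_j$ differ by $v_j(qi/i')$. This vanishes for almost all $j$ and forces $\Lambda_j=\infty\iff\Lambda'_j=\infty$.
\item The obstacle you anticipate at primes with $\Lambda_j=\infty$ does not arise. By the displayed description, multiplication by $\prod_j\textbf{p}_j^{\Lambda_j-\Lambda'_j}$ (finitely many factors) maps $S(1,\Lambda)$ onto $S(1,\Lambda')$. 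Hence your $q$ maps $S(i,\Lambda)=i\,S(1,\Lambda)$ onto $i'\,S(1,\Lambda')=S(i',\Lambda')$ with no further bookkeeping.
\end{itemize}
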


\begin{proof}[Proof of Lemma \ref{modsol}]\label{modsolA}
	With the same notations of Theorem \ref{b}, if $\Lambda \in \prod_{j \in \N}\N_0 \cup \{\infty\}$ satisfies $\prod_{j\in \N}\textbf{p}^{\Lambda_j}<\infty$, then $S(i,\Lambda)=i(\prod_{j\in \N} \textbf{p}^{\Lambda_j})^{-1}\Z$. 
	
	So, let us focus on the case $\prod_{j\in\N} \textbf{p}^{\Lambda_j}=\infty$. Let  $(b_k)_{k=2}^\infty$ be the sequence
	\begin{equation}
	b_k=\begin{cases}
	\textbf{p}_\ell \ \ \ \text{if} \ k=\textbf{p}_\ell^{\lambda_\ell}, \ \ \ \text{for some} \ \ell\in\N, \ \lambda_\ell \in \N, \  \lambda_\ell\leq \Lambda_\ell, \\
	0 \ \ \ \text{otherwise}
	\end{cases}
	\end{equation}
	and construct $\textit{\textbf{a}}=(a_j)_{j\in\N}$ by setting $a_1:=1$, $a_j:=b_{k_j}$ for all $j \geq 2$, where $(k_j)_{j\in \N}$ is the support of the sequence $(b_k)_{k\in\N}$, with $k_j<k_{j'}$ if $j<j'$. Now, given $\lambda \in \bigoplus_{j\in\N}\N_0 \cap \{0,...,\Lambda_j\}$, let $N$ be the natural number such that $k_N=\textbf{p}_{j^*}^{\lambda_{j^*}}$,  with $j^*$ such that $\textbf{p}_{j^*}^{\lambda_{j^*}}\geq \textbf{p}_j^{\lambda_j}$ for all $j \in \N$. Then, for any $n \in \Z$, 
	\begin{equation}\label{1f}
	\dfrac{n}{\prod_{j=1}^\infty \textbf{p}_j^{\lambda_j}}=\dfrac{n\prod_{j=1}^\infty\textbf{p}_j^{\overline{\lambda}_j-\lambda_j}}{\prod_{j=1}^Na_j},
	\end{equation}
	where, for all $j \in \N$, $\overline{\lambda}_j:=\max\{\lambda \in \{0,...,\Lambda_j\} \ | \ \textbf{p}_j^{\lambda}\leq \textbf{p}_{j^*}^{\lambda_{j^*}} \}$. On the other hand, given $m \in \Z$ and $N \in \N$, there exists a unique $\lambda \in \bigoplus_{j\in\N}\N_0 \cap \{0,1,...,\Lambda_j\}$ such that
	\begin{equation}\label{2f}
	\dfrac{m}{\prod_{j=1}^Na_j}=\frac{m}{\prod_{j=1}^\infty \textbf{p}_j^{\lambda_j}}.
	\end{equation}
	Clearly, \eqref{1f} and \eqref{2f} are inverses of each other. Moreover, for all $i \in \N$, the map
	\begin{equation}
	Q(\textit{\textbf{a}}) \ni \dfrac{m}{\prod_{j=1}^Na_j} \mapsto \frac{mi}{\prod_{j=1}^\infty \textbf{p}_j^{\lambda_j}} \in S(i,\Lambda),
	\end{equation}
	where $\lambda$ is defined in \eqref{2f}, is a homomorphism of groups. Therefore, for all $i \in \N$, $S(i,\Lambda)$ is isomorphic to $Q(\textit{\textbf{a}})$, with $\textit{\textbf{a}}\in\Sigma$ defined as above.
	
	Conversely, given $\textit{\textbf{a}}=(a_k)_{k \in \N}\in \Sigma$, for all $j \in \N$, let $(\Lambda_j^{(k)})_{k \in \N}$ be the list of natural numbers defined by
	\begin{equation}\label{ak}
	a_k=\prod_{j=1}^\infty \textbf{p}_j^{\Lambda_j^{(k)}}.
	\end{equation}
	We define $\Lambda=(\Lambda_j)_{j\in\N}\in \bigoplus_{j \in \N} \N_0 \cup \{\infty\}$ as
	\begin{equation}\label{lambdaj}
	\Lambda_j:=\sum_{k=1}^{\infty}\Lambda_j^{(k)}.
	\end{equation}
	Note that
	\begin{equation}
	\prod_{k=1}^Na_k=\prod_{k=1}^N\prod_{j=1}^\infty \textbf{p}_j^{\Lambda_j^{(k)}}=\prod_{j=1}^\infty \textbf{p}_j^{\sum_{k=1}^N\Lambda_j^{(k)}},
	\end{equation}
	which implies that, for all $i \in \N$, $Q(\textit{\textbf{a}})$ is isomorphic to $S(i,\Lambda)$, with $\Lambda$ as in \eqref{lambdaj}, via the map
	\begin{equation}
	Q(\textbf{a}) \ni \frac{m}{\prod_{k=1}^Na_k} \mapsto \frac{n}{\prod_{j=1}^\infty\textbf{p}_j^{\lambda_j}} \in S(i,\Lambda), \quad n=m, \quad \lambda_j=\sum_{k=1}^N\Lambda_j^{(k)}.
	\end{equation}
	This ends the proof.
\end{proof}

\nocite{*}

\end{document}